\documentclass[11pt]{amsart}
\usepackage[utf8]{inputenc}
\usepackage[margin=1.1in]{geometry}
\usepackage{amsmath,amssymb,amsthm,mathtools}
\usepackage{enumitem}
\usepackage{microtype}
\usepackage{hyperref}
\usepackage{tikz}
\usetikzlibrary{arrows.meta,calc,positioning,decorations.pathreplacing,decorations.markings}
\usetikzlibrary{arrows.meta,calc}

\hypersetup{
 colorlinks=true,
 linkcolor=blue!50!black,
 citecolor=blue!50!black,
 urlcolor=blue!50!black
}

\newtheorem{theorem}{Theorem}[section]
\newtheorem{proposition}[theorem]{Proposition}
\newtheorem{lemma}[theorem]{Lemma}
\newtheorem{corollary}[theorem]{Corollary}
\newtheorem{definition}[theorem]{Definition}
\newtheorem{remark}[theorem]{Remark}
\newtheorem{convention}[theorem]{Convention}
\newtheorem{introtheorem}{Theorem}

\newcommand{\Mod}{\operatorname{Mod}}
\newcommand{\Diff}{\operatorname{Diff}}
\newcommand{\Stab}{\operatorname{Stab}}
\newcommand{\im}{\operatorname{im}}

\title[Rim surgery and extendable mapping classes]{Extendable mapping classes of knotted surfaces obtained by rim surgery in \texorpdfstring{$S^4$}{S4}}
\author{Weizhe Niu}
\address{Yau Mathematical Sciences Center, Tsinghua University}
\email{weizheniu@mail.tsinghua.edu.cn}
\subjclass[2020]{57K40, 57K45, 57R50}
\keywords{knotted surfaces, rim surgery, mapping class groups, extendable diffeomorphisms}

\begin{document}

\begin{abstract}
Let \(\Sigma_g^0\subset S^4\), \(g\ge3\), be the standard unknotted closed
oriented surface, and let \(a\subset\Sigma_g^0\) be an oriented nonseparating
curve.  For every nontrivial knot \(J\subset S^3\), let
\(\Sigma_{g,a,J}\subset S^4\) be the surface obtained from \(\Sigma_g^0\) by
ordinary untwisted rim surgery along \(a\).  We compute its extendable
mapping-class subgroup exactly:
\[
E(\Sigma_{g,a,J})
=
\Stab_{\Mod(\Sigma_g)}(q_0)
\cap
\Stab_{\Mod(\Sigma_g)}(\Gamma_\mu(J)\cdot[a]).
\]
Here \(q_0\) is the Rokhlin quadratic form of the standard embedding,
\([a]\in H_1(\Sigma_g;\mathbb Z)\) is the oriented rim homology class, and
\(\Gamma_\mu(J)\subset\{\pm1\}\) records whether a meridian-preserving
diffeomorphism of the knot exterior can preserve or reverse the preferred
longitude.  Thus ordinary rim surgery cuts Hirose's unknotted extendable
subgroup by the stabilizer of the rim homology class, with the only additional
ambiguity coming from this peripheral symmetry of \(J\). We also prove a prescribed-mapping-class classification for such
ambient pairs \((S^4,\Sigma_{g,a,J})\). More precisely, given two such pairs and
\(f\in\Mod(\Sigma_g)\), we characterize when \(f\) is induced by an
orientation-preserving pair diffeomorphism in terms of the Rokhlin quadratic
form, the rim homology classes, and the meridian--longitude
symmetries of the knot exteriors.
\end{abstract}
\maketitle

\section{Introduction}

A natural way to measure the symmetry of a knotted surface \(\Sigma\subset X^4\) is to ask which abstract mapping classes of \(\Sigma\) are induced by diffeomorphisms of the ambient pair. Thus one studies the extendable subgroup \[ E(X,\Sigma)= \operatorname{im}\left( \pi_0\Diff^+(X,\Sigma)\longrightarrow \Mod(\Sigma) \right), \] which records the mapping classes induced by ambient diffeomorphisms. This problem also appears as Problem 4.39 in the K3 problem list
\cite{BaykurKirbyRubermanK3}. Even for surfaces in \(S^4\), this subgroup is sensitive both to the embedding and to the four-dimensional complement. For the standard unknotted orientable surface \(\Sigma_g^0\subset S^4\), Montesinos treated the genus-one case \cite{Montesinos1983}, and Hirose proved in all genera that the extendable subgroup is exactly the stabilizer of the Rokhlin quadratic form \cite{Hirose2002}: \[ E(S^4,\Sigma_g^0)=\Stab_{\Mod(\Sigma_g)}(q_0). \] Hirose also studied extendable mapping classes for certain nontrivial \(T^2\)-knots \cite{Hirose1993}, and for nonorientable standard surfaces the corresponding answer is governed by the Guillou--Marin quadratic form \cite{Hirose2012Nonorientable}. More generally, the problem has been studied from several complementary viewpoints, including flexible knotted surfaces in smooth four-manifolds \cite{HiroseYasuhara2008}, periodic mapping classes over \(S^4\) \cite{WangWang2024}, and recent criteria and constructions for extendible and non-extendible mapping classes of surfaces in four-manifolds \cite{LawandeSaha2025}.

More recently, Q.~Liu constructed knotted surfaces in \(S^4\) in every positive genus for which the action of the extendable self-homeomorphisms on first homology has finite image \cite{Liu2026}. In genus greater than one, this does not determine the full extendable mapping-class subgroup, since mapping classes acting trivially on homology remain uncontrolled. Baykur and Sunukjian used iterated rim surgery and relative Seiberg--Witten invariants to obtain projective homological restrictions on extendable groups; they also constructed a totally geodesic positive-genus surface in a closed hyperbolic \(4\)-manifold whose smooth and topological extendable mapping-class groups are trivial \cite{BaykurSunukjian2026}. Thus substantial homological rigidity and specially constructed examples with trivial full groups are known, but exact computations of proper full extendable subgroups for natural families of knotted surfaces in \(S^4\) remain comparatively rare.

The purpose of this paper is to give such a computation for one of the basic natural ways of producing knotted surfaces in \(S^4\): ordinary untwisted rim surgery on the standard surface. We give an exact formula for the extendable subgroup, together with a corresponding classification of
pairs obtained by one ordinary rim surgery.

More precisely, let \(a\subset\Sigma_g^0\) be an oriented nonseparating curve.
The curve \(a\) determines a product torus
\[
a\times S^1_{\mu_\Sigma}
\subset
\partial(S^4\setminus\nu\Sigma_g^0)
\cong
\Sigma_g^0\times S^1_{\mu_\Sigma},
\]
where \(S^1_{\mu_\Sigma}\) is the oriented meridian circle of the surface.
After pushing this torus slightly into the complement, we obtain the rim torus
\(R_a\).  Let \(J\subset S^3\) be a knot.  Ordinary untwisted rim surgery
removes \(\nu R_a\cong T^2\times D^2\) and glues in \(S^1\times E(J)\), where
\(E(J)=S^3\setminus\nu J\), by the convention
\[
s\mapsto \alpha,
\qquad
\mu_J\mapsto \mu_\Sigma,
\qquad
\lambda_J\mapsto \delta.
\]
Here \(s\) is the \(S^1\)-factor of \(S^1\times E(J)\), \(\mu_J\) and
\(\lambda_J\) are the meridian and preferred longitude of \(J\), \(\alpha\) is
the rim-longitude direction, and \(\delta\) is the rim-torus meridian.  We
denote the resulting surface by \(\Sigma_{g,a,J}\). We prove that
\[
\pi_1(S^4\setminus\nu\Sigma_{g,a,J})\cong \pi_1(E(J))=G_J,
\]
and, under this isomorphism,
\[
\mu_\Sigma\mapsto \mu_J,
\qquad
c\mapsto \lambda_J^{a\cdot c}
\]
for each loop \(c\subset\Sigma_g\) on the surface boundary.  Thus the surface
boundary map detects the functional \(c\mapsto a\cdot c\), and hence the
oriented homology class \([a]\), though not the isotopy class of \(a\).  Since
the standard unknotted surface complement has fundamental group \(\mathbb Z\),
the surface \(\Sigma_{g,a,J}\) is knotted whenever \(J\) is nontrivial.

\begin{definition}[meridian-preserving longitude-sign group]
\label{def:Gamma-mu}
Let \(J\subset S^3\) be a nontrivial oriented knot with oriented meridian
\(\mu_J\) and preferred longitude \(\lambda_J\).  Define
\[
\Gamma_\mu(J)
=
\left\{
\varepsilon\in\{\pm1\}
\ \middle|\
\begin{array}{l}
\exists\, h:E(J)\to E(J)\text{ a diffeomorphism such that}\\
h_*(\mu_J)=\mu_J,\quad h_*(\lambda_J)=\lambda_J^\varepsilon
\end{array}
\right\}.
\]
The diffeomorphism \(h\) is allowed to be orientation-preserving or
orientation-reversing.  We write
\[
\Gamma_\mu(J)\cdot[a]
=
\{\varepsilon[a]\mid \varepsilon\in\Gamma_\mu(J)\}.
\]
\end{definition}
For a subset \(S\subset H_1(\Sigma_g;\mathbb Z)\), we write
\[
\Stab_{\Mod(\Sigma_g)}(S)
=
\{f\in\Mod(\Sigma_g)\mid f_*(S)=S\}
\]
for its setwise stabilizer.

The group \(\Gamma_\mu(J)\) records the only knot-exterior ambiguity which
survives in the extendable-subgroup calculation.  If
\(\Gamma_\mu(J)=\{1\}\), the oriented rim class \([a]\) is fixed.  If
\(-1\in\Gamma_\mu(J)\), then the knot exterior allows the rim class to be
reversed.  The main theorem says that no other ambiguity occurs.

\begin{introtheorem}
    \label{thm:main} 
Let \(g\ge3\). Let \(\Sigma_g^0\subset S^4\) be the standard unknotted closed oriented surface, let \(a\subset\Sigma_g^0\) be an oriented nonseparating curve, and let \(J\subset S^3\) be a nontrivial knot. Let \(\Sigma_{g,a,J}\subset S^4\) be the ordinary untwisted rim-surgery surface along \(a\). Then, under the canonical marking by \(\Sigma_g^0\), \[ E(\Sigma_{g,a,J}) = \operatorname{Stab}_{\operatorname{Mod}(\Sigma_g)}(q_0) \cap \operatorname{Stab}_{\operatorname{Mod}(\Sigma_g)}(\Gamma_\mu(J)\cdot[a]). \] 
\end{introtheorem}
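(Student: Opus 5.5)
We prove the two inclusions separately. Both rest on the fundamental group computation stated in the introduction: \(\pi_1(S^4\setminus\nu\Sigma_{g,a,J})\cong G_J\), with \(\mu_\Sigma\mapsto\mu_J\) and \(c\mapsto\lambda_J^{a\cdot c}\) for loops on the boundary \(\Sigma_g\times S^1\).

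\emph{Necessity} (\(\subseteq\)). Let \(\Phi\) be an orientation-preserving diffeomorphism of \((S^4,\Sigma_{g,a,J})\) inducing \(f\). We first show that \(f\) preserves \(q_0\). The Rokhlin form is an invariant of the pair computed from membranes, and rim surgery is supported in a neighborhood of \(R_a\) disjoint from the surface. So the Rokhlin form of \(\Sigma_{g,a,J}\), under the canonical marking, equals \(q_0\). We would verify this either by choosing membranes that avoid \(\nu R_a\), or by noting that the quadratic form depends only on the spin structure induced on the surface, which is unchanged.

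Next, \(\Phi\) restricts to the complement and induces an automorphism \(\phi\) of \(G_J\), defined up to conjugacy. It sends \(\mu_J\) to a conjugate of \(\mu_J^{\pm1}\), and on the peripheral part it is compatible with \(f\times(\pm1)\) on \(\Sigma_g\times S^1\). Since \(\Phi\) preserves the orientations of \(S^4\) and \(\Sigma\), the meridian is preserved, not reversed. Applying the boundary formula gives \(\phi(\lambda_J^{a\cdot c})=\lambda_J^{a\cdot f(c)}\), up to conjugation.

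We then need \(\phi\) to be realized on \(E(J)\) by a diffeomorphism \(h\) with \(h_*\mu_J=\mu_J\) and \(h_*\lambda_J=\lambda_J^{\varepsilon}\). The plan is as follows.
\begin{enumerate}
\item Because \(\lambda_J\) commutes with \(\mu_J\) and lies in the commutator subgroup, \(\phi\) preserves the peripheral subgroup \(\langle\mu_J,\lambda_J\rangle\cong\mathbb Z^2\). This uses that \(J\) is nontrivial, so the centralizer of \(\mu_J\) is the peripheral subgroup (a consequence of the JSJ/Seifert structure).
\item \(\phi\) therefore sends \(\lambda_J\) to \(\lambda_J^{\pm1}\).
\item Peripheral-structure-preserving automorphisms of knot groups are induced by homeomorphisms of \(E(J)\) (Waldhausen), hence by diffeomorphisms.
\end{enumerate}
This gives \(\varepsilon\in\Gamma_\mu(J)\) with \(a\cdot f(c)=\varepsilon\, a\cdot c\) for all \(c\). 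By Poincaré duality \(f_*^{-1}[a]=\varepsilon[a]\), so \(f\) stabilizes \(\Gamma_\mu(J)\cdot[a]\), since \(\Gamma_\mu(J)\) is a group.

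\emph{Sufficiency} (\(\supseteq\)). Let \(f\) preserve \(q_0\) and satisfy \(f_*[a]=\varepsilon[a]\) with \(\varepsilon\in\Gamma_\mu(J)\). The construction proceeds in three steps.
\begin{enumerate}
\item \emph{Reduce to curves.} Hirose's theorem extends \(f\) to a diffeomorphism \(\Psi\) of \((S^4,\Sigma_g^0)\). Then \(\Psi(a)\) is a curve homologous to \(\pm a\), and \(\Psi\) carries \(\Sigma_{g,a,J}\) to \(\Sigma_{g,\Psi(a),J}\).
\item \emph{Rim surgery depends only on homology.} We show that rim surgery along homologous nonseparating curves gives equivalent pairs, by a diffeomorphism inducing a mapping class in the Torelli-type subgroup \(\Stab(q_0)\cap\Stab([a])\). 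Concretely, \(\Stab(q_0)\cap\Stab([a])\) is generated, for \(g\ge3\), by elements extending over \((S^4,\Sigma_g^0)\) that fix \(a\) up to isotopy. Then any curve homologous to \(a\) whose \(q_0\)-value agrees with \(q_0(a)\) (automatic, since \(q_0\) is a function on homology) is the image of \(a\) under such an element. In each case the extending diffeomorphism can be isotoped to fix \(\nu R_a\), so it passes through the surgery.
\item \emph{Sign reversal.} If \(\varepsilon=-1\), reversing the orientation of \(a\) while keeping \(\mu\) amounts to regluing by \(s\mapsto-\alpha\) and \(\lambda\mapsto-\delta\). We use \(\mathrm{id}_{S^1}\) or a reflection on the \(S^1\)-factor, times the \(h\) from \(\Gamma_\mu(J)\), chosen so that the product is orientation-preserving. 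This identifies the surgeries along \(a\) and along \(-a\) rel boundary.
\end{enumerate}

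The main obstacle is the second sufficiency step. There we must show that every element of \(\Stab(q_0)\cap\Stab([a])\) extends over \((S^4,\Sigma_{g,a,J})\), and not merely that it extends over the unknotted pair. Our first approach is to give explicit generators: Dehn twists along \(q_0\)-odd curves disjoint from \(a\) or meeting it appropriately, together with \(a\)-fixing Torelli elements. This uses a Johnson/Putman-type generation result for stabilizers of a homology class, valid for \(g\ge3\). For each generator we would exhibit Hirose's extending diffeomorphism supported away from \(R_a\).
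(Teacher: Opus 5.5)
Your upper-bound argument matches the paper. So does the overall shape of your lower bound: Hirose's extension carries \(\Sigma_{g,a,J}\) to \(\Sigma_{g,f(a),J}\), a correction moves \(f(a)\) back to \(a\), and the sign is absorbed by \((s,x)\mapsto(s^{-1},h(x))\). The genuine gap is your second sufficiency step. That is where essentially all of the paper's work lies, and the mechanism you propose cannot work.

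First, the correction \((S^4,\Sigma_{g,f(a),J})\to(S^4,\Sigma_{g,a,J})\) must induce the identity under the canonical markings. If it may induce any element of \(\Stab(q_0)\cap\Stab([a])\), the step is vacuous: Hirose's extension of \(f^{-1}\) itself is such a map, and composing with it only returns a diffeomorphism inducing the identity.

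Second, your generation claim contradicts itself. Elements fixing \(a\) up to isotopy generate a subgroup of the stabilizer of \(a\). That subgroup has infinite index in \(\Stab(q_0)\cap\Stab([a])\): powers of the twist about a separating curve meeting \(a\) give infinitely many homologous, pairwise non-isotopic images of \(a\). So such elements cannot carry \(a\) to every homologous curve.

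The \(a\)-fixing elements are the easy case: isotope \(f\) to be the identity near \(a\), and a product-normal extension is then the identity near \(R_a\). Any generating set must also contain elements with \(f(a)\) not isotopic to \(a\). For those, no extension is ``supported away from \(R_a\)'', because near \(\partial M\) it is \(f\times\mathrm{id}\), which moves \(R_a\) to \(R_{f(a)}\). Making it preserve \(\nu R_a\) amounts to a diffeomorphism \(A_{f(a)}\to A_a\) of drilled exteriors, rel \(\partial M\), carrying \((\alpha,\beta,\delta)\) to \((\alpha,\beta,\delta)\) with no shear. A shear \(\delta\mapsto\delta+\sigma\beta\) would turn the result into a twisted rim surgery. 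No generation result for mapping class groups provides this; it is four-dimensional information about the exterior \(M\).

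The paper supplies exactly this as the marked homology-relative rim lemma (Propositions~\ref{prop:homology-relative} and~\ref{prop:signed-homology-relative}). It proceeds in three steps:
\begin{enumerate}
\item Putman's connectivity of the complex of curves in a fixed primitive class (\(g\ge3\)), after subdividing bounding pairs, reduces to disjoint homologous curves \(a,a'\) cobounding \(P\cong\Sigma_{1,2}\).
\item Bene's chord-slide connectivity, realized by product-framed relative \(2\)-handle slides inside the fixed boundary \(\Sigma_g\times S^1_\beta\), gives a handle decomposition of \(M\) whose first two \(2\)-handles are attached along \(x\times\{0\}\) and \(y\times\{1/2\}\), for curves \(x,y\subset P\) meeting once.
\item Product-framed surgery on these turns \(P\times S^1_\beta\) into an annulus times \(S^1_\beta\). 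This yields an embedded \(T^2\times[0,1]\) in \(M\), away from \(\partial M\), from \(R_{a'}\) to \(R_a\), which carries the normal framing without shear.
\end{enumerate}
Your plan has nothing playing this role.

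There are also two smaller slips in the necessity half.
\begin{itemize}
\item The centralizer of \(\mu_J\) is strictly larger than \(P_J\) for composite knots, since the longitude of a summand commutes with \(\mu_J\). You do not need this claim. With the basepoint on \(\partial M\), the boundary formula already puts \(\phi(\lambda_J)\) in \(P_J\), once you include the term \(\mu_J^{u(c)}\) coming from fiber twisting of the normal-bundle map; applying the same argument to \(\Phi^{-1}\) gives equality.
\item Membranes for \(c\) cannot avoid \(\nu R_a\) when \(a\cdot c\neq0\), since \(\operatorname{lk}(c,R_a)=\pm\,a\cdot c\). So invariance of \(q_0\) must use the untwisted gluing \(\delta\mapsto\lambda_J\), not merely the fact that the surgery avoids the surface.
\end{itemize}
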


Thus the effect of ordinary single-rim surgery on Hirose's unknotted
extendable subgroup is completely explicit:
\[
\Stab(q_0)
\quad\leadsto\quad
\Stab(q_0)\cap\Stab(\Gamma_\mu(J)\cdot[a]).
\]
The rim-surgery complement detects the rim homology class, while the knot
exterior contributes exactly the longitude-sign ambiguity encoded by
\(\Gamma_\mu(J)\).

The substantive step in Theorem~\ref{thm:main} is the reverse inclusion, proved in Proposition~\ref{prop:lower}. It requires more than Hirose's theorem and the complement calculation.  The latter gives the upper bound in
Proposition~\ref{prop:upper}: an extendable mapping class must preserve \(q_0\)
and carry \([a]\) to a sign allowed by the peripheral symmetry of \(J\).
Conversely, Hirose's theorem extends an admissible mapping class over the
unknotted pair, but this extension carries the rim torus associated to \(a\)
to the rim torus associated to \(f(a)\).  To obtain a self-diffeomorphism of
the rim-surgered pair, one must identify the corresponding drilled exteriors
relative to the fixed surface boundary and carry the rim-longitude,
surface-meridian, and rim-meridian directions
\[
(\alpha,\mu_\Sigma,\delta)
\]
exactly, with no additional shear, so that the maps glue to the
knot-exterior piece.  This is supplied by the marked homology-relative rim
lemma, Proposition~\ref{prop:homology-relative}, and its signed version,
Proposition~\ref{prop:signed-homology-relative}.

The same methods give a classification theorem for ordinary single-rim pairs.
This also strengthens the subgroup computation by identifying exactly when two
surfaces obtained by one ordinary rim surgery are diffeomorphic as oriented
pairs.

\begin{introtheorem}
\label{thm:intro-pair-classification}
Let \(g\ge3\).  Let \(a,b\subset\Sigma_g\) be oriented nonseparating curves,
let \(J,K\subset S^3\) be nontrivial oriented knots, and let
\(f\in\Mod(\Sigma_g)\).  Then there exists an orientation-preserving
diffeomorphism
\[
F:(S^4,\Sigma_{g,a,J})
\longrightarrow
(S^4,\Sigma_{g,b,K})
\]
of \(S^4\), restricting orientation-preservingly to the surface and inducing
\(f\) under the canonical markings, if and only if there exist
\(\varepsilon\in\{\pm1\}\) and a possibly orientation-reversing
diffeomorphism
\[
h:E(J)\to E(K)
\]
such that
\[
f^*q_0=q_0,
\qquad
f_*[a]=\varepsilon[b],
\]
and
\[
h_*(\mu_J)=\mu_K,
\qquad
h_*(\lambda_J)=\lambda_K^\varepsilon.
\]
\end{introtheorem}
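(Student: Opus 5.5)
We prove the two implications separately, following the same pattern as the proof of Theorem~\ref{thm:main}: an upper-bound argument through the fundamental group of the complement, and a lower-bound construction that glues a Hirose extension to a knot-exterior map.

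\emph{Necessity.} Suppose \(F\) is given. Restricted to the surface it realises \(f\), and it carries a tubular neighbourhood to a tubular neighbourhood.
\begin{enumerate}[label=(\roman*)]
\item Since \(F\) preserves the orientations of \(S^4\) and of the surface, it preserves the oriented surface meridian \(\mu_\Sigma\).
\item \(F\) induces an isomorphism \(G_J\cong\pi_1(S^4\setminus\nu\Sigma_{g,a,J})\to\pi_1(S^4\setminus\nu\Sigma_{g,b,K})\cong G_K\), well defined up to inner automorphisms and path choices. Under it \(\mu_J\) maps to a conjugate of \(\mu_K\).
\item Compatibility with the boundary maps \(c\mapsto\lambda_J^{a\cdot c}\) and \(c\mapsto\lambda_K^{b\cdot c}\) forces \(\lambda_J\) to map to a conjugate of \(\lambda_K^{\varepsilon}\) with \(f_*[a]=\varepsilon[b]\).
\item To see this, note that the image of the peripheral subgroup \(\langle\mu_J,\lambda_J\rangle\) lands in a conjugate of \(\langle\mu_K,\lambda_K\rangle\). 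The abelian peripheral structure, together with the nontriviality of \(\lambda\) for nontrivial knots, pins down the exponent.
\item Since knot exteriors are Haken and aspherical, Waldhausen's theorem upgrades the peripheral-preserving isomorphism \(G_J\to G_K\) to a diffeomorphism \(h:E(J)\to E(K)\), possibly orientation-reversing, with \(h_*\mu_J=\mu_K\) and \(h_*\lambda_J=\lambda_K^\varepsilon\). This is exactly the mechanism used for the upper bound in Proposition~\ref{prop:upper}.
\item For \(f^*q_0=q_0\): the rim-surgered pair is obtained from the standard pair by a surgery supported away from a Seifert-type structure detecting \(q_0\). Alternatively, \(q_0\) is a pair invariant, being computed from any bounding 3-manifold via a spin/characteristic framing argument, so \(F\) must preserve it as in Proposition~\ref{prop:upper}.
\end{enumerate}

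\emph{Sufficiency.} Given \(f\), \(\varepsilon\), and \(h\), we first invoke Hirose's theorem to extend \(f\) to a diffeomorphism \(\Phi\) of \((S^4,\Sigma_g^0)\). Then \(\Phi\) carries the rim torus \(R_a\) to the rim torus of \(f(a)\), whose homology class is \(\varepsilon[b]\). Next we apply the signed marked homology-relative rim lemma, Proposition~\ref{prop:signed-homology-relative}. It replaces \(\Phi\), after isotopy relative to the surface boundary, by a diffeomorphism of drilled exteriors
\[
S^4\setminus(\nu\Sigma_g^0\cup\nu R_a)\longrightarrow S^4\setminus(\nu\Sigma_g^0\cup\nu R_b)
\]
that restricts to the standard marking on \(\Sigma_g\times S^1_{\mu_\Sigma}\). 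On the rim torus boundaries it sends \((\alpha,\mu_\Sigma,\delta)\) to \((\varepsilon\alpha,\mu_\Sigma,\varepsilon\delta)\) with no shear.

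On the knot side we use \(\mathrm{id}_{S^1}^{\pm}\times h:S^1\times E(J)\to S^1\times E(K)\). The \(S^1\) factor is reflected or not so that \(s\mapsto\varepsilon s\) and the total map is orientation-preserving. We need to check that the signs match. Orientation preservation of \(S^1\times h\) requires \(\deg(s)\cdot\deg h=+1\). Under the gluing \(s\mapsto\alpha\), \(\mu_J\mapsto\mu_\Sigma\), \(\lambda_J\mapsto\delta\), the boundary map is \(s\mapsto\pm s\), \(\mu\mapsto\mu\), \(\lambda\mapsto\lambda^\varepsilon\). Its determinant is \(\pm\varepsilon\), which must equal the determinant \(\varepsilon^2=1\) of the exterior-side map \((\alpha,\mu,\delta)\mapsto(\varepsilon\alpha,\mu,\varepsilon\delta)\). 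This is consistent once we choose \(\pm=\varepsilon\). It remains to check that this choice agrees with the orientation character of \(h\), which is forced by orientation preservation of the glued manifold. I expect this bookkeeping, including possibly composing with the involution of \(T^2\times D^2\) reversing \(\alpha\) and \(\delta\), to be the delicate point. The two pieces then agree on the gluing torus \(T^3\) up to isotopy, since diffeomorphisms of \(T^3\) inducing the same map on \(H_1\) are isotopic. They glue to \(F\), which restricts to \(f\) on the surface.

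I regard the main obstacle as the exact no-shear matching supplied by Proposition~\ref{prop:signed-homology-relative}, which we assume. Beyond that, the care needed is in necessity, to ensure that the group isomorphism respects peripheral subgroups with the correct meridian (not merely up to inversion), so that Waldhausen applies with \(h_*\mu_J=\mu_K\) exactly.
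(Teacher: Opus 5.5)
Your overall route is the paper's. For necessity you use invariance of the Rokhlin form, a peripheral comparison of complement groups, and Waldhausen's theorem. For sufficiency you use a Hirose extension, the (signed) homology-relative rim lemma, and the knot-side map \((s,x)\mapsto(s^\varepsilon,h(x))\). Merging the two drilled-exterior maps into one and gluing up to isotopy on \(T^3\) are harmless variations.

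There is, however, an unjustified step in sufficiency. You attribute the absence of shear entirely to Proposition~\ref{prop:signed-homology-relative}. That proposition (and Proposition~\ref{prop:homology-relative} for \(\varepsilon=1\)) only controls \(\Psi:A_{f(a)}\to A_b\), which is rel \(\partial M\). The Hirose extension \(\Phi\) is a priori arbitrary near \(\Sigma_g^0\): it need not even carry \(R_a\) onto \(R_{f(a)}\). After isotoping it to a normal-bundle map \((p,z)\mapsto(f(p),r(p)z)\), it sends \(\alpha_a\mapsto\alpha_{f(a)}\beta^{u([a])}\) on \(Q_a\), where \(u([a])=\deg(r|_a)\). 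A nonzero \(u([a])\) is not matched by \((s^\varepsilon,h)\), because the gluing sends \(s\) to \(\alpha\) and \(\mu_J\) to \(\beta\).

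What is missing is Lemma~\ref{lem:normal-product}. Surface loops die in \(\pi_1(M)\cong\mathbb Z\langle\beta\rangle\) while \(\beta\) has infinite order, so naturality on \(\partial M\) forces \(u=0\). Hence \(r\) is null-homotopic, and \(\Phi\) can be isotoped to be product-normal, \((p,z)\mapsto(f(p),z)\). Only after this does \(\Phi\) carry \((\alpha_a,\beta,\delta_a)\) literally to \((\alpha_{f(a)},\beta,\delta_{f(a)})\), which is what your gluing uses.

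The remaining points are easy to close.

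The sign bookkeeping you call delicate needs no extra involution. A diffeomorphism of a connected manifold with boundary has the orientation sign of its boundary restriction. Here \(h|_{\partial E(J)}\) has matrix \(\operatorname{diag}(1,\varepsilon)\) in the bases \((\mu,\lambda)\), so \(h\) has sign \(\varepsilon\), and \((s^\varepsilon,h)\) is orientation-preserving.

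In necessity, your items (iii)--(iv) are asserted rather than derived:
\begin{itemize}
\item A loop \(c_0\) with \(a\cdot c_0=1\) gives \(\varphi(\lambda_J)\in P_K\), and the same argument for \(F^{-1}\) gives \(\varphi(P_J)=P_K\). Hence \(\varphi(\lambda_J)=\mu_K^k\lambda_K^{\varepsilon}\).
\item The meridional term is killed by homology: longitudes are null-homologous and \(H_1(E(K))\cong\mathbb Z\langle\mu_K\rangle\). Nontriviality of \(\lambda\) is not the relevant fact.
\item Comparing exponents in the embedded \(P_K\cong\mathbb Z^2\) gives \(b\cdot f_*(c)=\varepsilon\,(a\cdot c)\) for all \(c\). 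Invariance and nondegeneracy of the intersection form then give \(f_*[a]=\varepsilon[b]\).
\end{itemize}

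Finally, rim surgery is not supported away from the surface; it changes the surface along \(a\). So your first justification of \(f^*q_0=q_0\) does not apply. What is needed is Lemma~\ref{lem:rokhlin}, that both rim-surgered surfaces have Rokhlin form \(q_0\) under their canonical markings, together with invariance of the Rokhlin form under pair diffeomorphisms.
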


Consequently, after forgetting the induced mapping class, the pairs
\[
(S^4,\Sigma_{g,a,J})
\quad\text{and}\quad
(S^4,\Sigma_{g,b,K})
\]
are orientation-preservingly diffeomorphic if and only if there exist
\(f\), \(\varepsilon\), and \(h\) satisfying the displayed conditions.
When \(K=J\) and \(b=a\), the statement for the given mapping class \(f\)
says that \(f\) extends exactly when
\[
f^*q_0=q_0
\quad\text{and}\quad
f_*[a]=\varepsilon[a]
\quad\text{for some }\varepsilon\in\Gamma_\mu(J).
\]
This is Theorem~\ref{thm:main}.

If \(\Gamma_\mu(J)=\{1\}\), then Theorem~\ref{thm:main} specializes to
\[
E(\Sigma_{g,a,J})
=
\Stab(q_0)\cap\Stab([a]).
\]
This case occurs for many knots.  For example, hyperbolic knots with trivial
full symmetry group have \(\Gamma_\mu(J)=\{1\}\).  By Mostow--Prasad rigidity,
the outer automorphism induced by a complement diffeomorphism is represented by
a symmetry and is therefore trivial, so after choosing a basepoint the induced
automorphism is inner.  If conjugation by \(g\in G_J\) fixes \(\mu_J\), then
\(g\in C_{G_J}(\mu_J)\).  By the centralizer theorem for peripheral
parabolics \cite[Proposition~3.1(5)]{Friedl2011}, this centralizer is
abelian; it contains the peripheral subgroup \(P_J\).  Hence \(g\) commutes
with \(\lambda_J\), so the conjugation fixes \(\lambda_J\).  Akbulut--Ruberman give
explicit examples, including \(11n42\), \(12n0313\), and \(12n0430\)
\cite[Proposition~2.6]{AkbulutRuberman2016}.  Baker--Luecke construct further
examples among asymmetric hyperbolic \(L\)-space knots in \(S^3\)
\cite{BakerLuecke2020}.

In subsequent work, the author uses iterated rim surgery and nonabelian
exterior-group rigidity to construct, for every genus at least three,
surfaces in \(S^4\), and more generally in closed simply connected smooth
\(4\)-manifolds, whose full orientation-preserving extendable mapping-class
subgroups are trivial in both the smooth and topological categories
\cite{NiuTrivialExtendable2026}.

\medskip
\noindent\textbf{Organization.}
Section~\ref{sec:preliminaries} fixes the conventions for extendable mapping
classes, the Rokhlin quadratic form, and ordinary untwisted rim surgery.
Section~\ref{sec:complement} computes the rim-surgery complement and the
induced map from loops on the surface boundary into the knot group.
Section~\ref{sec:upper} proves the upper bound in Theorem~\ref{thm:main} by
combining this complement calculation with preservation of the Rokhlin form
and Waldhausen's peripheral realization theorem for knot exteriors.  The next three sections prove the lower-bound input.
Section~\ref{sec:local-commutator} establishes the local product-framed
commutator move.  Section~\ref{sec:handle-charts} places the required
product-framed handles in the fixed marked exterior.
Section~\ref{sec:homology-relative} combines the local move with the
Hatcher--Margalit connectivity theorem for nonseparating curves in a fixed
primitive homology class to prove the marked homology-relative rim lemma and
its signed version.  Section~\ref{sec:lower} proves the lower bound in
Theorem~\ref{thm:main}. Section~\ref{sec:proofs-main} first deduces Theorem~\ref{thm:main} from the
upper and lower bounds, then records the infinite-index consequence, and
finally proves the pair-classification theorem,
Theorem~\ref{thm:intro-pair-classification}.

\medskip
\par\noindent\textbf{Acknowledgement of AI use.}
The author used Prism, Overleaf AI, and Grammarly to assist with LaTeX formatting and English-language editing. DeepSeek was used for proofreading and the preparation of Figure 2.

\section{Preliminaries and notation}\label{sec:preliminaries}

\subsection{Extendable mapping classes}

Let \(\Sigma\subset X^4\) be a closed oriented embedded surface. Throughout, \(\Diff^+(X,\Sigma)\) means ambient orientation-preserving diffeomorphisms of \(X\) whose restriction to \(\Sigma\) is orientation-preserving. Define
\[
E(\Sigma,X)=
\im\left(
\pi_0\Diff^+(X,\Sigma)\to \Mod(\Sigma)
\right).
\]
In this paper \(X=S^4\), and \(\Sigma\) will be either the standard unknotted surface \(\Sigma_g^0\) or a rim-surgered surface \(\Sigma_{g,a,J}\).

\subsection{The Rokhlin quadratic form}

For an oriented embedded surface \(\Sigma\subset S^4\), the ambient spin structure and the oriented normal bundle induce a spin structure on \(\Sigma\). This spin structure determines a quadratic refinement
\[
q_\Sigma:H_1(\Sigma;\mathbb Z_2)\to \mathbb Z_2
\]
of the mod-two intersection pairing. We call this the Rokhlin quadratic form. For the standard unknotted surface \(\Sigma_g^0\subset S^4\), we write
\(
q_0=q_{\Sigma_g^0}.
\)
Hirose proved that
\(
E(\Sigma_g^0)=\Stab_{\Mod(\Sigma_g)}(q_0).
\)
See \cite{Hirose2002}.
\subsection{Rim surgery convention}

We recall the local-pair model of ordinary untwisted rim-surgery. The reader can refer to \cite{FintushelStern1997} for more details. Let
\(
a\subset \Sigma_g^0
\)
be an oriented nonseparating curve. Choose a product neighborhood of \(a\) in the pair
\[
(S^4,\Sigma_g^0)
\]
of the form
\[
S^1_a\times (B^3,I),
\]
where \(I\subset B^3\) is a standard properly embedded unknotted arc and the surface is locally
\[
S^1_a\times I.
\]
Given a knot \(J\subset S^3\), let \(J_+\subset B^3\) denote the corresponding knotted arc, i.e.~a 1-string tangle whose closure is \(J\). The rim-surgered surface
\[
\Sigma_{g,a,J}\subset S^4
\]
is obtained by replacing the local annulus
\[
S^1_a\times I
\]
by
\[
S^1_a\times J_+
\]
inside the same ambient block \(S^1_a\times B^3\), and leaving the surface unchanged outside this block.

This local replacement determines a canonical marking
\[
\iota:\Sigma_g^0\to \Sigma_{g,a,J}.
\]
It is the identity outside the surgery annulus and, inside the annulus, identifies
\[
S^1_a\times I\longrightarrow S^1_a\times J_+
\]
by preserving the \(S^1_a\)-coordinate and the arc parameter. This marking is an abstract parametrization of the surface; it is not an ambient isotopy between the two embedded surfaces.  Within the fixed ordinary untwisted local-pair model, two orientation-compatible arc parametrizations, product charts, or sufficiently small replacement regions can be joined by an isotopy of tubular-neighborhood germs that is stationary on a collar of the replacement boundary.  Hirsch's ambient tubular-neighborhood theorem \cite[Chapter~8, Theorem~1.8]{Hirsch1976} then extends this germ isotopy to an ambient isotopy relative to the replacement boundary.  Hence the resulting abstract surface parametrizations are isotopic relative to the replacement boundary and determine the same element of \(\Mod(\Sigma_g)\).

Equivalently, the complement of the locally modified surface is described as follows. Let
\[
M=S^4\setminus \nu\Sigma_g^0,
\qquad
\partial M=\Sigma_g\times S^1_\beta,
\]
where \(\beta=\mu_\Sigma\) is the surface meridian. The rim torus is
\[
R_a=a\times S^1_\beta\subset M,
\]
pushed slightly into a collar of \(\partial M\). We call
\[
A_a=M\setminus \nu R_a
\]
the drilled exterior. It has two boundary pieces. We call
\(\partial M=\Sigma_g\times S^1_\beta\) the outer, or surface, boundary, and
we call \(Q_a=\partial\nu R_a\cong T^3\) the internal, or rim-torus, boundary.
The rim-surgery gluing is performed along \(Q_a\), while \(\partial M\) remains
the boundary of the final surface complement.

The internal boundary
\[
Q_a=\partial\nu R_a\cong T^3
\]
has ordered basis
\[
(\alpha,\beta,\delta),
\]
where \(\alpha\) is the \(a\)-direction, \(\beta=\mu_\Sigma\), and \(\delta\) is the rim-torus meridian. We regard the drilled exterior \(A_a\) together with the fixed
outer-boundary identification
\[
\partial M=\Sigma_g\times S^1_\beta
\]
and the ordered internal basis \((\alpha,\beta,\delta)\).  A diffeomorphism
between drilled exteriors is said to preserve these marked data when it is the
identity on \(\partial M\) and carries the ordered internal basis exactly as
specified. The complement of \(\Sigma_{g,a,J}\) is obtained by gluing
\[
S^1_s\times E(J)
\]
to \(A_a\) by
\[
s\mapsto\alpha,\qquad
\mu_J\mapsto\beta,\qquad
\lambda_J\mapsto\delta.
\]
This is the standard equivalence between the local rim-surgery description and
the knot-surgery description along the rim torus.

\begin{remark}
Throughout this paper, rim surgery means ordinary untwisted rim surgery. Twisted rim-surgery variants introduce an additional meridional shear in the gluing.  Such variants change the boundary formula in the complement
calculation and would require a boundary-shear version of the
homology-relative rim lemma used in the lower-bound construction.  We do not
pursue them here.
\end{remark}

\section{The complement calculation}\label{sec:complement}
This section computes the drilled exterior and the surface-boundary map used
in the upper-bound argument.

\begin{lemma}\label{lem:drilled-exterior}
Let \(A_a=M\setminus\nu(a\times\mu_\Sigma)\). Then
\[
\pi_1(A_a)\cong
\langle \beta,\delta\mid [\beta,\delta]=1\rangle.
\]
The internal boundary map \(\pi_1(Q_a)\to\pi_1(A_a)\) is
\[
\alpha\mapsto 1,
\qquad
\beta\mapsto\beta,
\qquad
\delta\mapsto\delta.
\]
For a loop \(c\subset\Sigma_g\), the outer boundary map
\(\pi_1(\partial M)\to\pi_1(A_a)\) is
\(
c\mapsto \delta^{a\cdot c}.
\)
\end{lemma}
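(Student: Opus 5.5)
The plan is to exploit that \(R_a\) lies in a collar of \(\partial M\), so \(A_a\) decomposes as
\[
A_a \;=\; M' \cup_{\Sigma_g\times\{0\}\times S^1_\beta} C,
\qquad
C=\bigl(\Sigma_g\times[0,1]\setminus \nu(a\times\{\tfrac12\})\bigr)\times S^1_\beta .
\]
Here \(M'\cong M\) is \(M\) with the collar removed, \(\Sigma_g\times\{1\}\times S^1_\beta\) is the outer boundary \(\partial M\), and \(R_a=a\times\{\tfrac12\}\times S^1_\beta\). I will then apply van Kampen along the inner level \(\Sigma_g\times\{0\}\times S^1_\beta\).

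\emph{Input on \(M\).} The standard fact I will use is \(\pi_1(M)\cong\mathbb Z\langle\beta\rangle\). Moreover, every loop \(c\times\{pt\}\subset\partial M\) with \(c\subset\Sigma_g\) is null-homotopic in \(M\). This holds because \(\Sigma_g^0\) bounds a handlebody in \(S^3\subset S^4\), and pushing this handlebody off the surface in the \(\pm\) normal directions bounds every curve of \(\Sigma_g\). Consequently \(\pi_1(\Sigma_g\times S^1)\to\pi_1(M')\) is \(c\mapsto 1\), \(\beta\mapsto\beta\). The effect of gluing \(M'\) is therefore to kill the normal closure of \(\pi_1(\Sigma_g\times\{0\})\) in \(\pi_1(C)=\pi_1(N)\times\langle\beta\rangle\), where \(N=\Sigma_g\times[0,1]\setminus\nu(a\times\{\tfrac12\})\). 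So the whole question reduces to computing the 3-dimensional quotient \(\pi_1(N)/\langle\!\langle\pi_1(\Sigma_g\times 0)\rangle\!\rangle\) and the images of the boundary loops in it.

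\emph{The 3-dimensional step.} I will show that \(N\) deformation retracts, by pushing down from the top level \(\Sigma_g\times\{1\}\), onto \(\Sigma_g\times\{0\}\cup(a\times[0,\tfrac12])\cup T\), where \(T=\partial\nu(a\times\tfrac12)\) is a torus with meridian \(\delta\). Up to homotopy this is \(\Sigma_g\times 0\) with the torus \(T\) attached along the curve \(a\times 0\). Killing \(\pi_1(\Sigma_g\times 0)\) amounts to coning off \(\Sigma_g\times 0\). This yields \(T\) with its \(a\)-circle collapsed, i.e. \(S^2\vee S^1_\delta\), whose fundamental group is \(\mathbb Z\langle\delta\rangle\). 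Since \(\beta\) is a product factor of \(C\), it is central, and we obtain
\[
\pi_1(A_a)\cong\mathbb Z\langle\beta\rangle\times\mathbb Z\langle\delta\rangle .
\]

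\emph{Boundary maps.} For the internal boundary \(Q_a=T\times S^1_\beta\), the generator \(\alpha\) is isotopic in \(N\) to \(a\times 0\), hence \(\alpha\mapsto 1\), while \(\beta\mapsto\beta\) and \(\delta\mapsto\delta\). For the outer map, a loop \(c\times\{1\}\) is homotoped down to \(c\times\{0\}\) through the slab. Each transverse crossing of \(c\) with \(a\) forces the homotopy to pass around \(a\times\{\tfrac12\}\), contributing \(\delta^{\pm1}\) with sign given by the local intersection sign. Hence \(c\times 1\simeq (c\times 0)\cdot\delta^{a\cdot c}\), which maps to \(\delta^{a\cdot c}\) in the quotient.

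\emph{Main obstacle.} I expect the main difficulty to be making this last step rigorous with correct signs and basepoints, and checking that the deformation retraction of \(N\) is valid. Since the target group is abelian, basepoint issues disappear, and it suffices to compute in \(H_1(N)\) via linking number with \(a\times\tfrac12\). The sign convention for \(\delta\) (its orientation relative to \(\alpha,\beta\)) will be fixed so that this linking number equals \(a\cdot c\).
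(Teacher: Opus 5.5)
Your proof is correct. Its outer architecture matches the paper's:
\begin{itemize}
\item split off the drilled collar as \(N\times S^1_\beta\) (the paper's \(Y_a\times S^1_\beta\));
\item note that gluing the rest of \(M\) kills exactly the normal closure of the surface subgroup at the inner level, while \(\beta\) stays central;
\item read off the outer boundary map from the punctured annulus \(c\times[0,1]\) in the abelian quotient.
\end{itemize}
Your explicit remark that amalgamating along the surjection \(\pi_1(\Sigma_g\times S^1)\to\mathbb Z\langle\beta\rangle\) just kills its kernel is a cleaner statement of what the paper leaves implicit.

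The genuine difference is the three-dimensional step.
\begin{itemize}
\item The paper decomposes \(Y_a\) as \(F\times[0,1]\), with \(F=\overline{\Sigma_g\setminus\nu a}\), union \(a\) times a punctured rectangle. Van Kampen then gives the presentation \(\langle\pi_1(F),\alpha,\delta,t\mid[\alpha,\delta]=1,\ a_+=\alpha,\ ta_-t^{-1}=\alpha\rangle\), i.e.\ \(\pi_1(Y_a)\cong\pi_1(\Sigma_g)*_{\langle a\rangle}\mathbb Z^2\), and the paper then quotients by the surface group.
\item You instead identify the homotopy type \(N\simeq\Sigma_g\cup_a T\) and cone off \(\Sigma_g\), which leaves the pinched torus.
\end{itemize}
Your route is more conceptual: it yields the homotopy type, not just \(\pi_1\), and it makes \(\alpha\mapsto1\) visible geometrically through the wall under the tube. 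The paper's route produces the full group \(\pi_1(Y_a)\) in explicit algebraic form.

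Two points in your write-up need tightening.

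First, the literal downward push from the outer level is not a continuous retraction. Points directly above the tube stop on \(T\), while nearby points descend to level \(0\). You must modify the retraction near the tube. In each normal cross-section, cut the square minus the tube along the vertical slit beneath the tube; the result is a disk. Its boundary arc consisting of the two sides and the top can be pushed onto the complementary boundary arc, matching the vertical push on \(F\times[0,1]\) along the sides. With this fix, the homotopy type you claim is correct.

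Second, ``pushing this handlebody off the surface bounds every curve of \(\Sigma_g\)'' is false as stated. A curve on \(\Sigma_g\) bounds in \(H_\pm\) only if it dies in \(\pi_1(H_\pm)\). Two standard fixes work:
\begin{itemize}
\item use that the standard curves \(u_i\) and \(v_i\) bound compressing disks in \(H_-\) and \(H_+\), pushed into the corresponding normal directions, and that they generate \(\pi_1(\Sigma_g)\);
\item or push \(c\) into the interior of a hemisphere \(B^4_\pm\), an open ball disjoint from the surface, and use that all normal levels \(c\times\{\theta\}\) are freely homotopic in \(\partial M\).
\end{itemize}
The paper simply asserts this fact.
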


\begin{proof}
The standard unknotted surface is the boundary of a standard embedded
handlebody in \(S^4\).  Thus
\(
\pi_1(M)\cong\mathbb Z\langle\beta\rangle,
\)
and the surface subgroup maps trivially.  Push
\(R_a=a\times S^1_\beta\) into the collar
\(\Sigma_g\times S^1_\beta\times[0,1]\).  The drilled collar is
\[
C_a=
\left(\Sigma_g\times S^1_\beta\times[0,1]\right)
\setminus\nu(a\times S^1_\beta\times\{1/2\}),
\]
and, since the drilling is product in the \(\beta\)-direction,
\(
C_a\cong Y_a\times S^1_\beta,
\)
where
\[
Y_a=(\Sigma_g\times[0,1])\setminus\nu(a\times\{1/2\}).
\]
The element \(\delta\) is the positive meridian of the drilled curve in
\(Y_a\).

We compute the quotient of \(\pi_1(Y_a)\) obtained by killing the upper
surface group.  Let
\[
F=\overline{\Sigma_g\setminus\nu a}\cong\Sigma_{g-1,2},
\]
and denote its two boundary curves by \(a_+\) and \(a_-\).  The manifold
\(Y_a\) is the union of \(F\times[0,1]\) and the product of \(a\) with an
annulus obtained by deleting a disk from the normal rectangle to
\(a\times\{1/2\}\).  Their intersection has two annular components.  Choose
paths in \(F\) from the basepoint to the two boundary curves, and choose the
connecting path through the upper part of
the local annulus.  Van Kampen then gives
\[
\pi_1(Y_a)\cong
\left\langle
\pi_1(F),\alpha,\delta,t
\ \middle|\
[\alpha,\delta]=1,\ a_+=\alpha,\ t a_-t^{-1}=\alpha
\right\rangle.
\]
Here \(t\), together with \(\pi_1(F)\), generates the subgroup represented
by \(\Sigma_g\times\{1\}\).  Therefore
\[
\frac{\pi_1(Y_a)}
{\langle\!\langle\pi_1(\Sigma_g\times\{1\})\rangle\!\rangle}
\cong
\langle\delta\rangle\cong\mathbb Z.
\]
When the non-collar part of \(M\) is attached to the upper boundary, this
upper surface group is killed and the \(S^1_\beta\)-factor maps to \(\beta\).
Consequently
\[
\pi_1(A_a)\cong
\langle\beta,\delta\mid[\beta,\delta]=1\rangle.
\]

On the internal boundary, \(\alpha\) is represented by the copy of
\(a_+\) in the above presentation and hence maps trivially.  The other two
coordinate circles map to the generators \(\beta\) and \(\delta\).  Thus
\[
\alpha\mapsto1,
\qquad
\beta\mapsto\beta,
\qquad
\delta\mapsto\delta.
\]

Finally, fix the outer basepoint and use paths in the annulus
\(c\times[0,1]\) to the upper boundary.  After making an oriented loop
\(c\subset\Sigma_g\) transverse to \(a\), delete small disks around the
intersection points of \(c\times[0,1]\) with
\(a\times\{1/2\}\).  The upper copy of \(c\) lies in the subgroup that is
killed.  Each new boundary circle is a conjugate of \(\delta\) or
\(\delta^{-1}\), with sign equal to the local intersection sign determined by
the orientation of \(\Sigma_g\) and the positive meridian \(\delta\).  The
quotient is cyclic, so the conjugacies disappear.  The boundary relation of
the punctured annulus therefore gives
\[
c\longmapsto\delta^{\sum\operatorname{sign}(a\cap c)}
=\delta^{a\cdot c}.
\]
\end{proof}

\begin{proposition}\label{prop:complement-group}
For ordinary untwisted rim-surgery along \(a\) using \(J\), after choosing an
outer basepoint and the same paths to the internal boundary as in
Lemma~\ref{lem:drilled-exterior},
\[
\pi_1(S^4\setminus\nu\Sigma_{g,a,J})\cong G_J.
\]
Under this isomorphism,
\(
\mu_\Sigma\mapsto\mu_J,
\)
and for every based loop \(c\subset\Sigma_g\),
\(
c\mapsto \lambda_J^{a\cdot c}.
\)
\end{proposition}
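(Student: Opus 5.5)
The plan is to apply van Kampen to the decomposition
\[
S^4\setminus\nu\Sigma_{g,a,J}=A_a\cup_{Q_a}\bigl(S^1_s\times E(J)\bigr),
\]
with the gluing $s\mapsto\alpha$, $\mu_J\mapsto\beta$, $\lambda_J\mapsto\delta$ from the rim surgery convention. I would put the basepoint on the outer boundary and use the same paths to $Q_a$ as in Lemma~\ref{lem:drilled-exterior}, and choose the basepoint of $E(J)$ on $\partial E(J)$ so that the peripheral elements $\mu_J,\lambda_J$ are honest based loops on the gluing torus. Then
\[
\pi_1 \cong \pi_1(A_a) *_{\pi_1(Q_a)} \bigl(\mathbb Z\langle s\rangle\times G_J\bigr).
\]

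The amalgam is computed generator by generator.
\begin{itemize}[leftmargin=*]
\item By Lemma~\ref{lem:drilled-exterior}, $\pi_1(A_a)=\langle\beta,\delta\mid[\beta,\delta]=1\rangle$, and $\pi_1(Q_a)\to\pi_1(A_a)$ sends $\alpha\mapsto1$, $\beta\mapsto\beta$, $\delta\mapsto\delta$.
\item On the other side, $\pi_1(Q_a)=\mathbb Z^3\to\mathbb Z\times G_J$ sends $\alpha\mapsto s$, $\beta\mapsto\mu_J$, $\delta\mapsto\lambda_J$.
\item The amalgamation relations are therefore $s=1$, $\beta=\mu_J$ and $\delta=\lambda_J$.
\item Since $\pi_1(A_a)$ is generated by $\beta$ and $\delta$, and its only relation $[\beta,\delta]=1$ already holds in $G_J$ because $\mu_J$ and $\lambda_J$ commute on the peripheral torus, the $A_a$ factor adds no new generators or relations.
\item Killing $s$ kills the $\mathbb Z$ factor.
\end{itemize}
Hence the pushout is $G_J$, and $\mu_\Sigma=\beta\mapsto\mu_J$.

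For the boundary formula, take a loop $c$ on the surface boundary $\partial M=\Sigma_g\times S^1_\beta$. Lemma~\ref{lem:drilled-exterior} sends it to $\delta^{a\cdot c}$ in $\pi_1(A_a)$. Composing with the map to the amalgam gives $\lambda_J^{a\cdot c}$.

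The main point needing care is basepoints and paths. The internal-boundary map in Lemma~\ref{lem:drilled-exterior} is only meaningful after fixing a path from the outer basepoint to $Q_a$, so I would check that the identification $\delta=\lambda_J$ is an equality of based elements for this same path, not merely up to conjugacy. That holds because the gluing is a diffeomorphism of tori and so matches based peripheral subgroups once a common basepoint is taken on $Q_a$. Everything else is formal, and the abelian target of Lemma~\ref{lem:drilled-exterior} means no conjugation ambiguity arises on the $A_a$ side.
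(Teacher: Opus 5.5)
Your proposal is correct and is essentially the paper's proof: van Kampen on $A_a\cup_{Q_a}(S^1_s\times E(J))$ with the based gluing relations $s=\alpha=1$, $\mu_J=\beta$, $\lambda_J=\delta$, after which the only relation imposed on $G_J$ is $[\mu_J,\lambda_J]=1$, which already holds, and the boundary formula is read off from $c\mapsto\delta^{a\cdot c}$ in Lemma~\ref{lem:drilled-exterior}. Your $*_{\pi_1(Q_a)}$ should be read as the van Kampen pushout rather than as an amalgamated free product, since $\pi_1(Q_a)\to\pi_1(A_a)$ kills $\alpha$; this does not affect the computation.
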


\begin{proof}
The glued piece has fundamental group
\[
\pi_1(S^1_s\times E(J))=
\mathbb Z\langle s\rangle\times G_J.
\]
Use the chosen paths so that the three gluing equations are literal:
\[
s=\alpha,
\qquad
\mu_J=\beta,
\qquad
\lambda_J=\delta.
\]
By Lemma~\ref{lem:drilled-exterior}, \(\alpha=1\) in \(\pi_1(A_a)\).  Thus
van Kampen gives
\[
\pi_1(S^4\setminus\nu\Sigma_{g,a,J})
\cong
\frac{
\langle \beta,\delta\mid[\beta,\delta]=1\rangle
*
(\mathbb Z\langle s\rangle\times G_J)
}{
\langle\!\langle
s=1,\ \mu_J=\beta,\ \lambda_J=\delta
\rangle\!\rangle
}.
\]
After eliminating \(s,\beta,\delta\), the only relation imposed on \(G_J\)
is \([\mu_J,\lambda_J]=1\), which already holds in the peripheral subgroup.
Therefore the resulting group is \(G_J\).  The same based identifications give
\(
\mu_\Sigma\mapsto\mu_J
\)
and, by Lemma~\ref{lem:drilled-exterior},
\[
c\mapsto\delta^{a\cdot c}\mapsto\lambda_J^{a\cdot c}.
\]
\end{proof}
\begin{corollary}\label{cor:knotted}
If \(J\) is nontrivial, then \(\Sigma_{g,a,J}\) is not the standard unknotted surface.
\end{corollary}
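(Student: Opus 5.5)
The plan is to compare fundamental groups of the complements. By Proposition~\ref{prop:complement-group}, the complement \(S^4\setminus\nu\Sigma_{g,a,J}\) has fundamental group \(G_J=\pi_1(E(J))\). The complement of the standard unknotted surface \(\Sigma_g^0\) has fundamental group \(\mathbb Z\langle\beta\rangle\); this was noted at the start of the proof of Lemma~\ref{lem:drilled-exterior}, since \(\Sigma_g^0\) bounds a standard handlebody. So I would argue by contradiction. Suppose some diffeomorphism of pairs \((S^4,\Sigma_{g,a,J})\cong(S^4,\Sigma_g^0)\) exists, or an ambient isotopy taking one surface to the other. Its restriction to the surface complements is then a diffeomorphism, so in particular \(G_J\cong\mathbb Z\).

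The key input is the classical fact that a knot group \(G_J\) is infinite cyclic only when \(J\) is the unknot. The route I would cite is Dehn's lemma and the loop theorem (Papakyriakopoulos). If \(G_J\cong\mathbb Z\), then the longitude \(\lambda_J\) lies in the commutator subgroup, since it bounds a Seifert surface. It is therefore trivial in \(G_J\), because \(\mathbb Z\) is abelian and has trivial commutator subgroup. Dehn's lemma then gives an embedded disk in \(E(J)\) bounded by \(\lambda_J\), so \(J\) is trivial. Since \(J\) is assumed nontrivial, \(G_J\not\cong\mathbb Z\), and we have the desired contradiction.

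I do not expect any real obstacle here. The only points to check are that the invariant being compared, the isomorphism type of \(\pi_1\) of the complement, is independent of the choices of tubular neighborhood and basepoint, and that a diffeomorphism of pairs restricts to a diffeomorphism of complements. Both are standard, since tubular neighborhoods are unique up to isotopy.

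I would also remark on a stronger statement that follows from the same computation. The meridian \(\mu_\Sigma\) normally generates both groups. However, the surface boundary map \(c\mapsto\lambda_J^{a\cdot c}\) is nontrivial here, because \(a\) is nonseparating, so some \(c\) has \(a\cdot c=1\), and \(\lambda_J\neq1\) in \(G_J\) for nontrivial \(J\). For the unknotted surface, by contrast, this map is trivial. The main argument does not need this, though: nonabelianness of \(G_J\) alone already suffices.
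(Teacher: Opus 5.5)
Your proposal is correct and follows essentially the same route as the paper: you compare $\pi_1$ of the complements, using Proposition~\ref{prop:complement-group} to identify the rim-surgery complement group with $G_J$ and the unknotting theorem for knot groups to rule out $G_J\cong\mathbb Z$. The only differences are cosmetic. You sketch the Dehn's-lemma proof of that theorem where the paper cites Rolfsen, and you conclude that no pair diffeomorphism exists, which is slightly stronger than the paper's non-isotopy.
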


\begin{proof}
The standard unknotted complement has group
\(
\pi_1(S^4\setminus\nu\Sigma_g^0)\cong\mathbb Z.
\)
By Proposition~\ref{prop:complement-group},
\(
\pi_1(S^4\setminus\nu\Sigma_{g,a,J})\cong G_J.
\)
If \(J\) is nontrivial, then \(G_J\not\cong\mathbb Z\) by the unknotting theorem for knot groups \cite[Chapter~4, Section~B, Theorem~1]{Rolfsen1976}. Hence the complements are not diffeomorphic, so \(\Sigma^{0}_{g}\) and \(\Sigma_{g,a,J}\) are not isotopic.
\end{proof}

\section{The upper bound}\label{sec:upper}

We combine the complement calculation with spin and peripheral data to prove
the upper bound.

\begin{lemma}\label{lem:rokhlin}
Under the canonical marking of \(\Sigma_{g,a,J}\) by \(\Sigma_g^0\), ordinary
untwisted rim-surgery satisfies
\(
q_{\Sigma_{g,a,J}}=q_0.
\)
\end{lemma}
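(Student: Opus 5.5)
We would prove the lemma by computing the Rokhlin form one curve at a time. We then reduce each computation to a local one inside the ambient block \(S^1_a\times B^3\), where the two surfaces differ. Recall the geometric description of \(q_\Sigma\). For a simple closed curve \(c\subset\Sigma\), choose an embedded (or generically immersed) orientable surface \(D\subset S^4\) with \(\partial D=c\) and interior transverse to \(\Sigma\). Let \(n(D)\in\mathbb Z\) be the obstruction to extending the \(\Sigma\)-normal framing of \(c\) across \(D\) (the framing induced by \(\Sigma\) on the normal bundle of \(c\) in \(D\)'s normal directions). Then
\[
q_\Sigma([c])=n(D)+\#(\operatorname{int}D\cap\Sigma)\pmod 2,
\]
and this is independent of \(D\) because \(S^4\) is spin. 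Equivalently, \(q_\Sigma\) is the quadratic form of the spin structure induced on \(\Sigma\). Since \(q_\Sigma\) is a quadratic refinement of the mod-two intersection form, it suffices to check \(q_{\Sigma_{g,a,J}}(\iota_*x)=q_0(x)\) on a basis \(x\) of \(H_1(\Sigma_g;\mathbb Z_2)\) represented by curves \(c\).

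For the curve \(a\) itself and for curves disjoint from the annulus \(S^1_a\times I\), we would argue directly. A curve disjoint from the annulus, or a parallel copy \(a\times\{t\}\) with \(t\in\partial I\) pushed to the edge of the annulus, lies in the region where the marking \(\iota\) is the identity. It is therefore literally the same framed curve in both surfaces. We take a basis consisting of \(a\) (pushed off the annulus) and curves meeting the annulus in at most one arc \(\{\theta\}\times I\). Such a basis exists because \(a\) is nonseparating: one dual curve \(b\) crosses the annulus once, and the remaining curves avoid it.

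The key step concerns \(b\). Its image \(\iota(b)\) runs along \(\{\theta\}\times J_+\) inside the slice \(\{\theta\}\times B^3\). We would pick a bounding surface \(D_0\) for \(b\) relative to \(\Sigma_g^0\), made transverse so that near the block it meets \(\{\theta\}\times B^3\) in a standard way. The modification inside \(\{\theta\}\times B^3\) is then a \(3\)-dimensional question: replace the trivial arc \(I\) by the knotted arc \(J_+\). Choose a Seifert-type surface \(S\subset B^3\) for the closed curve \(J_+\cup(\text{arc on }\partial B^3)\) that cobounds with \(I\) (a Seifert surface for \(J\) banded to the unknotting disk of \(I\)). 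Replace the local disk piece of \(D_0\) by \(S\), pushed off in the \(S^1_a\)-direction so that it is disjoint from \(S^1_a\times J_+\) except along its boundary. The framing contribution changes by the linking of \(J_+\) with its push-off along the surface framing, and this is the Seifert framing, which is \(0\) for the untwisted convention. The intersection count with the surface is unchanged, because the pushed surface \(S\times\{\theta'\}\) meets \(S^1_a\times J_+\) only where \(J_+\) crosses \(S\times\{\theta'\}\), which it does not after the push in the \(S^1\)-direction. Hence \(q\) of \(b\) is unchanged. Alternatively, one could argue intrinsically: the induced spin structure on the annulus \(S^1_a\times J_+\) is a product of the spin structure on \(S^1_a\) with the unique one on the arc, matching that of \(S^1_a\times I\) because the ambient block \(S^1_a\times B^3\) and its spin structure are unchanged. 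Since the spin structures agree on \(\partial\) and on the annulus with compatible product structures, they agree globally.

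We expect the main obstacle to be making the untwisted hypothesis enter correctly. We must check that the normal framing of \(S^1_a\times J_+\) along the \(S^1_a\)-direction agrees with that of \(S^1_a\times I\) (no twisting around \(S^1_a\)), so that the induced spin structure along \(a\) is preserved. The spin-structure formulation seems the cleanest route: both annuli are products \(S^1_a\times(\text{arc})\) inside the same product block with product spin structure, so the induced spin structures agree as product structures, compatibly with \(\iota\).
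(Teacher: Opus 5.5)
Your membrane route differs from the paper's and can be made to work, but two of its steps rest on the wrong reasons, and the spin-structure ``alternative'' at the end is not a proof.

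The paper compares the two relative spin structures on the annulus directly. Their difference lies in $H^1(S^1\times I,\partial(S^1\times I);\mathbb Z_2)\cong\mathbb Z_2$ and is read off on the arc factor as a mod-two framing of the arc. It vanishes because the untwisted pushoff of $J_+$ closes up to $\lambda_J$ and $\operatorname{lk}(J,\lambda_J)=0$. You instead apply Rokhlin's framing-free membrane formula on a basis $a,b,\dots$. That is a legitimate route, but two repairs are needed.

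(i) For curves $c$ disjoint from the annulus, ``literally the same framed curve'' is not enough. The value $q(c)$ depends on how a membrane meets all of $\Sigma$, and $\Sigma$ has changed inside the block. Choose the finitely many membranes first, in general position. A membrane for such a $c$ generically misses the curve $a$, and a membrane $D_0$ for $b$ meets a small neighborhood of $a$ only in its collar strip along $b$. Then perform the replacement in a block small enough to avoid them, which the paper's setup allows.

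(ii) For $b$, the relevant quantity is not a Seifert framing or a linking number. The boundary section defining $n(D)$ along $\{\theta\}\times J_+$ is the $\Sigma$-tangent direction normal to the curve, namely $\partial_\theta$. This direction is normal to the whole slice $\{\theta\}\times B^3$. So keep $S$ inside that slice, with $S$ bounded by $J_+$ and $I$, $\operatorname{int}S\cap J_+=\varnothing$, and $S$ glued to the strip of $D_0$ along $I$. Then $\partial_\theta$ extends over $S$ without zeros, and $\operatorname{int}S$ misses $\Sigma_{g,a,J}$, which meets the slice only in $J_+$. Hence both $n$ and the intersection count are unchanged, and no push in the $S^1_a$-direction is needed. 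In your route the untwisted hypothesis enters only through the product form $S^1_a\times J_+$ of the new annulus.

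Your closing spin-structure argument should be dropped or completed. That both annuli are products in the same product block does not by itself make the induced spin structures agree rel boundary. They can differ by the generator of $H^1(S^1\times I,\partial;\mathbb Z_2)$, which is detected on the arc factor (that is, by $b$), not along $S^1_a$. So the ``main obstacle'' you name points in the wrong direction, and ``the unique spin structure on the arc'' ignores exactly this relative $\mathbb Z_2$. In that formulation the answer depends on the parity of the normal framing of $J_+$ rel endpoints, and one must know that the relevant framing is the Seifert one. This is precisely the input the paper supplies via $\operatorname{lk}(J,\lambda_J)=0$.

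As for the trade-off: the corrected membrane argument needs no identification of the normal framing of the knotted arc, at the cost of a basis reduction and general-position bookkeeping. The paper's argument is a single local comparison on the annulus, but it must pin down that framing.
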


\begin{proof}
The comparison is supported in the local replacement
\[
S^1\times(B^3,I)
\leadsto
S^1\times(B^3,J_+).
\]
The canonical marking preserves the \(S^1\)-coordinate and the arc parameter,
and the two surfaces, together with their boundary normal trivializations,
agree outside the replacement annulus.  It is therefore enough to compare the
induced spin structures on
\(
S^1\times I
\)
and
\(
S^1\times J_+
\)
relative to this fixed boundary trivialization.

Close \(J_+\) by the fixed trivial arc in the complementary ball used in the
local-pair model, so that the resulting closed curve is \(J\).  Extend the
product endpoint framing of \(J_+\) over this closing arc.  Close the chosen
normal pushoff of \(J_+\) by the corresponding product pushoff of the closing
arc.  The relative framing integer of the knotted arc is the linking number of
these two closed curves: changing the relative arc framing by one full twist
changes that linking number by one, while the product framing on the closing
arc is fixed.  In ordinary untwisted rim surgery the closed pushoff is the
preferred Seifert longitude \(\lambda_J\).  Hence
\[
\operatorname{lk}(J,\lambda_J)=0
\]
shows that the normal framing of \(J_+\) is homotopic rel endpoints to the
product framing of the straight arc \(I\).

The difference of the two relative spin structures on the annulus lies in
\[
H^1(S^1\times I,\partial(S^1\times I);\mathbb Z_2)
\cong\mathbb Z_2.
\]
Its value on a properly embedded arc transverse to the \(S^1\)-direction is
the mod-two relative winding of the normal framing along the arc factor.  The
preceding framed comparison shows that this winding is zero.  Equivalently,
under Poincar\'e--Lefschetz duality the possible difference is dual to the
annular core, and the relative framing parity just computed is its
coefficient.  Thus the difference class vanishes.  The induced spin
structures agree under the canonical marking, and therefore so do their
quadratic refinements:
\[
q_{\Sigma_{g,a,J}}=q_0.
\]
\end{proof}

\begin{remark}
The above proof uses the untwisted convention in an essential way. If one changes the gluing by introducing a twist in the knot-meridian direction, then the relative spin class on the local annulus is changed by the corresponding mod-two normal-framing winding. We do not use any twisted convention in this paper.
\end{remark}

\begin{convention}[basepoints and peripheral coordinates]
\label{conv:basepoints-peripheral}
All fundamental groups of rim-surgery complements are taken with a basepoint
on the surface boundary, together with a fixed path to the internal
rim-surgery piece.  A diffeomorphism of complements naturally induces an
outer isomorphism.  When the diffeomorphism preserves the oriented surface
meridian, we choose the basepoint paths so that the corresponding representative
of the outer isomorphism sends the oriented knot meridian literally to the
oriented knot meridian.  Thus, for a self-map using \(J\), we write
\(
\mu_J\mapsto\mu_J,
\)
and for a map from the \(J\)-rim-surgery complement to the \(K\)-rim-surgery
complement, we write
\(
\mu_J\mapsto\mu_K.
\)
This convention is used only to choose representatives of the induced outer
isomorphisms for which the meridian equations are literal.  The longitude sign
is not assumed; it is determined in Proposition~\ref{prop:upper} and in
Theorem~\ref{thm:intro-pair-classification}.

For a nontrivial knot \(J\), the boundary torus of \(E(J)\) is incompressible
\cite[Chapter~4, Section~B, Theorem~2]{Rolfsen1976}, so the peripheral map is injective:
\[
\pi_1(\partial E(J))\hookrightarrow G_J.
\]
Thus
\[
P_J=\langle\mu_J,\lambda_J\rangle\cong\mathbb Z^2
\]
is an embedded free abelian subgroup of \(G_J\).  In particular, if
\[
\lambda_J^m\mu_J^n=\lambda_J^{m'}\mu_J^{n'}
\]
inside \(G_J\), then
\(
m=m'\) and  \(n=n'.
\)
\end{convention}

\begin{theorem}[peripheral realization for knot exteriors]
\label{thm:peripheral-realization}
Let \(J,K\subset S^3\) be nontrivial knots.  Let
\[
G_J=\pi_1(E(J)),\qquad G_K=\pi_1(E(K)),
\]
and let
\[
P_J=\langle\mu_J,\lambda_J\rangle,\qquad
P_K=\langle\mu_K,\lambda_K\rangle
\]
be the chosen peripheral subgroups.  Suppose \(\varphi:G_J\to G_K\) is an
isomorphism such that \(\varphi(P_J)\) is conjugate in \(G_K\) to \(P_K\).
Then, up to inner conjugacy in \(G_K\), \(\varphi\) is induced by a
possibly orientation-reversing diffeomorphism
\[
h:E(J)\longrightarrow E(K)
\]
whose boundary restriction realizes the induced peripheral action
\(
\varphi|_{P_J}:P_J\to P_K.
\)

In particular, if, after the chosen basepoint conventions,
\[
\varphi(\mu_J)=\mu_K,
\qquad
\varphi(\lambda_J)=\lambda_K^\varepsilon,
\qquad
\varepsilon\in\{\pm1\},
\]
then \(h\) may be chosen so that
\[
h_*(\mu_J)=\mu_K,
\qquad
h_*(\lambda_J)=\lambda_K^\varepsilon.
\]
\end{theorem}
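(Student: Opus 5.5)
This is Waldhausen's theorem for Haken manifolds, specialized to knot exteriors. The plan is to verify the hypotheses, invoke that theorem, and then check that the boundary restriction carries the chosen peripheral elements correctly.

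\emph{Step 1: hypotheses.} For a nontrivial knot \(J\), the exterior \(E(J)\) is compact, orientable and irreducible; irreducibility follows from Alexander's theorem on embedded \(2\)-spheres in \(S^3\). Its torus boundary is incompressible by the incompressibility result recalled in Convention~\ref{conv:basepoints-peripheral}. Hence \(E(J)\) is Haken and boundary-irreducible, and the same holds for \(E(K)\).

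\emph{Step 2: invoke Waldhausen.} Replace \(\varphi\) by \(c_g\circ\varphi\) so that \(\varphi(P_J)=P_K\) literally; this changes \(\varphi\) only by an inner automorphism. The isomorphism \(\varphi|_{P_J}\colon P_J\to P_K\cong\mathbb Z^2\) is then given by a matrix in \(GL_2(\mathbb Z)\). Realize it by a diffeomorphism \(\partial E(J)\to\partial E(K)\) of tori; this is possible for either determinant. The boundary map now respects the peripheral structure, so Waldhausen's theorem (``On irreducible 3-manifolds which are sufficiently large'', Cor.~6.5, in the relative version for boundary-preserving isomorphisms) applies. It yields a homotopy equivalence of pairs which is homotopic to a homeomorphism \(h\colon E(J)\to E(K)\) extending the given boundary map and inducing \(\varphi\) up to conjugacy. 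Smoothing in dimension three makes \(h\) a diffeomorphism. No orientation restriction is needed, since Waldhausen's theorem allows orientation-reversing maps; this is why \(h\) may be orientation-reversing. I expect the main obstacle to be exactly this peripheral compatibility: one must ensure that the conjugacy-class data of the boundary subgroup match, and not just the abstract group isomorphism. That is precisely the hypothesis on \(\varphi(P_J)\).

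\emph{Step 3: the ``in particular''.} Under the basepoint conventions, \(\varphi(\mu_J)=\mu_K\) and \(\varphi(\lambda_J)=\lambda_K^{\varepsilon}\). Then \(\varphi(P_J)=P_K\) with no conjugation needed. By Step~2, \(h_*\) agrees with \(\varphi\) on \(P_J\) up to an inner automorphism \(c_x\) of \(G_K\) with \(c_x(P_K)=P_K\).

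For a nontrivial knot the peripheral subgroup is malnormal up to its own elements: if \(xP_Kx^{-1}\cap P_K\neq 1\) then \(x\in P_K\). This holds by the JSJ theory or Simon's results, and is easy for hyperbolic and torus/satellite cases. Hence \(x\in P_K\), which is abelian, so \(c_x\) is the identity on \(P_K\). Therefore \(h_*(\mu_J)=\mu_K\) and \(h_*(\lambda_J)=\lambda_K^{\varepsilon}\) literally. Alternatively, since the boundary map in Step~2 was chosen to realize exactly \(\varphi|_{P_J}\), and \(h\) extends it, the identity holds on the nose after basing at a boundary point. This avoids malnormality altogether, and I would present that simpler route.
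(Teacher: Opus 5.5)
Your argument is correct and is essentially the paper's proof. Both verify the Haken and boundary-incompressibility hypotheses, conjugate so that \(\varphi(P_J)=P_K\), and choose a torus diffeomorphism \(b\) realizing \(\varphi|_{P_J}\). Both then use Waldhausen to get a homeomorphism equal to \(b\) on the boundary, smooth it, and read off the final assertion from the choice of \(b\). The only difference is that you take the rel-boundary form of Waldhausen directly, while the paper applies Corollary~6.5 and then isotopes the boundary restriction to \(b\) across a collar.

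The malnormality aside is false as stated and should be dropped. For a torus knot the fibre class \(\mu_K^{pq}\lambda_K\) is central, so \(xP_Kx^{-1}\cap P_K\neq 1\) for every \(x\in G_K\). Your direct route does not need it.
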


\begin{proof}
A nontrivial knot exterior is compact and orientable.  It is irreducible:
a sphere in the exterior bounds a ball in \(S^3\), and the side not containing
the knot is a ball in the exterior.  Its boundary torus is incompressible by
\cite[Chapter~4, Section~B, Theorem~2]{Rolfsen1976}.  It is also sufficiently
large: a minimal-genus Seifert surface is incompressible, since a compression
would produce a Seifert surface of smaller genus, and a nontrivial knot has
positive genus.  Thus the hypotheses of Waldhausen's peripheral realization
theorem \cite[Corollary~6.5]{Waldhausen1968} apply to \(E(J)\) and \(E(K)\).

After replacing \(\varphi\) by an inner conjugate, assume
\(
\varphi(P_J)=P_K.
\)
Choose first a diffeomorphism
\[
b:\partial E(J)\longrightarrow\partial E(K)
\]
whose action on \(\pi_1(T^2)\cong\mathbb Z^2\) is
\(\varphi|_{P_J}\).  Waldhausen's theorem gives a homeomorphism of knot
exteriors inducing \(\varphi\).  Its boundary restriction and \(b\) induce
the same automorphism of \(\pi_1(T^2)\), so they are isotopic as maps of the
torus; for orientation-preserving maps this is \cite[Theorem~2.5]{FarbMargalit2012}, and the orientation-reversing case follows by composing both maps with a fixed reflection.  Extending this isotopy across a boundary collar changes the
homeomorphism so that it is exactly \(b\) on the boundary and is product with
\(b\) on a smaller collar.

The resulting homeomorphism is smooth and equal to the prescribed product map
on that collar.  The relative approximation theorem for homeomorphisms of
smooth \(3\)-manifolds gives diffeomorphisms arbitrarily close to it and equal
to it on a still smaller boundary collar
\cite[Theorem~4.6]{Pardon2021}.  Choose the approximation sufficiently close
that it is homotopic to the homeomorphism rel that smaller collar.  For
example, after fixing a Riemannian metric, the two maps are equal on the collar
and, on its compact complement, corresponding image points are joined by
unique short geodesics.  The diffeomorphism therefore induces the
same fundamental-group isomorphism and has the same exact boundary map.  The
final assertion follows by choosing \(b\) to send \(\mu_J\) to \(\mu_K\) and
\(\lambda_J\) to \(\lambda_K^\varepsilon\).
\end{proof}

\begin{proposition}[upper bound]
\label{prop:upper}
Let \(g\ge3\), and let \(J\subset S^3\) be a nontrivial knot.  Then
\[
E(\Sigma_{g,a,J})
\subseteq
\Stab_{\Mod(\Sigma_g)}(q_0)
\cap
\Stab_{\Mod(\Sigma_g)}(\Gamma_\mu(J)\cdot[a]).
\]
\end{proposition}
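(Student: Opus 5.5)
Take \(f\in E(\Sigma_{g,a,J})\), realized by \(F\in\Diff^+(S^4,\Sigma_{g,a,J})\), and pull it back along the canonical marking \(\iota\). We must show two things: \(f^*q_0=q_0\), and \(f_*[a]=\varepsilon[a]\) for some \(\varepsilon\in\Gamma_\mu(J)\). Since \(\{\pm1\}\) is a group, the second condition is equivalent to \(f_*(\Gamma_\mu(J)\cdot[a])=\Gamma_\mu(J)\cdot[a]\).

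\textbf{The quadratic form.} The Rokhlin form is natural under pair diffeomorphisms of \(S^4\) that preserve the orientations of \(S^4\) and of the surface. Such a diffeomorphism preserves the ambient spin structure (unique on \(S^4\)) and the oriented normal bundle. Hence \(F\) pulls \(q_{\Sigma_{g,a,J}}\) back to itself. By Lemma~\ref{lem:rokhlin}, \(q_{\Sigma_{g,a,J}}=q_0\) under \(\iota\), so \(f^*q_0=q_0\).

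\textbf{The rim class.} Isotope \(F\) so that it preserves a tubular neighborhood and the unit normal circle bundle. Then \(F\) restricts to a diffeomorphism of the complement \(W=S^4\setminus\nu\Sigma_{g,a,J}\). On the outer boundary \(\Sigma_g\times S^1_\beta\) it induces \(f\) on the surface factor. Because both orientations are preserved, it sends the oriented meridian \(\mu_\Sigma\) to itself, up to conjugacy and homotopy.

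Using Convention~\ref{conv:basepoints-peripheral} and Proposition~\ref{prop:complement-group}, we obtain an automorphism \(\varphi\) of \(G_J\) with the following properties:
\begin{itemize}
\item \(\varphi(\mu_J)=\mu_J\);
\item for every loop \(c\), \(\varphi(\lambda_J^{a\cdot c})\) is conjugate to \(\lambda_J^{a\cdot f^{-1}c}\), where \(f\) is transported appropriately.
\end{itemize}
More carefully, the outer boundary subgroup \(\pi_1(\partial M)\) is mapped compatibly. Its image in \(G_J\) is the subgroup \(\langle\mu_J,\lambda_J\rangle\), since \(a\) is nonseparating and so some \(c\) has \(a\cdot c=1\). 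Hence \(\varphi\) preserves \(P_J\), possibly after an inner adjustment fixed by the basepoint convention.

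Because \(\varphi\) fixes \(\mu_J\) and restricts to an automorphism of \(P_J\cong\mathbb Z^2\), we get \(\varphi(\lambda_J)=\lambda_J^{\varepsilon}\mu_J^{k}\). The preferred longitude is the generator of \(P_J\cap[G_J,G_J]\), and that subgroup is characteristic. Hence \(k=0\) and \(\varepsilon=\pm1\).

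By Theorem~\ref{thm:peripheral-realization}, some diffeomorphism \(h\) of \(E(J)\) satisfies \(h_*\mu_J=\mu_J\) and \(h_*\lambda_J=\lambda_J^\varepsilon\). So \(\varepsilon\in\Gamma_\mu(J)\).

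Comparing boundary maps, and using injectivity of \(P_J\) from Convention~\ref{conv:basepoints-peripheral}, gives \(\varepsilon\,(a\cdot c)=a\cdot f^{-1}(c)\), or the analogous identity with \(f\) on the other side, for all \(c\). Since \(f\) preserves the intersection form, this identity reads \(f_*[a]=\varepsilon[a]\) or \(f^{-1}_*[a]=\varepsilon[a]\). The two are equivalent because \(\varepsilon=\pm1\). Nondegeneracy of the intersection pairing over \(\mathbb Z\) recovers \([a]\) from the functional \(c\mapsto a\cdot c\).

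\textbf{Main obstacle.} The main difficulty is the basepoint and conjugacy bookkeeping. The map on \(\pi_1(\partial M)\) is only defined up to a path. We need the single conjugating element that makes \(\varphi(\mu_J)=\mu_J\) to also make the images of the surface loops equal to the literal powers \(\lambda_J^{\pm a\cdot c}\), rather than merely conjugates of them.

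I would handle this by working with the whole boundary subgroup at once. The image of \(\pi_1(\partial M)\) is the abelian group \(P_J\), and \(F\) carries it to a conjugate \(gP_Jg^{-1}\) with \(g\mu_Jg^{-1}=\mu_J\) after adjustment. For nontrivial knots, \(P_J\) is malnormal up to itself in the relevant sense: the centralizer of \(\mu_J\) lies in \(P_J\), which is Friedl's result for hyperbolic knots and holds more generally by JSJ considerations. Therefore \(g\) can be absorbed, and the literal identities hold.
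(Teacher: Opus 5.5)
Your proof follows the paper's route. The steps are: naturality of the Rokhlin form with Lemma~\ref{lem:rokhlin}; then \(\varphi(\mu_J)=\mu_J\) and \(\varphi(P_J)=P_J\); then removing the \(\mu_J\)-component of \(\varphi(\lambda_J)\); then Waldhausen realization for \(\varepsilon\in\Gamma_\mu(J)\); and finally peripheral injectivity plus nondegeneracy of the intersection form. Getting \(\varphi(P_J)=P_J\) directly from surjectivity of \(\pi_1(\partial M)\to P_J\), instead of via \(F\) and \(F^{-1}\), is fine. So is the commutator-subgroup argument in place of homology, though \(P_J\cap[G_J,G_J]=\langle\lambda_J\rangle\) is only invariant under automorphisms preserving \(P_J\), not characteristic in \(G_J\). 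When you compare boundary maps, write the identity as \(\varphi(\lambda_J^{a\cdot c})=\lambda_J^{a\cdot f_*(c)}\mu_J^{u(c)}\), with \(f_*\) rather than \(f^{-1}\). Here \(u\) records the twist of \(F\) in the normal circle direction, which your formula omits. Peripheral injectivity then gives both \(u(c)=0\) and \(a\cdot f_*(c)=\varepsilon\,(a\cdot c)\).

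The justification in your last paragraph is false, although the step it supports is fine. The centralizer of \(\mu_J\) need not lie in \(P_J\). Take a composite knot \(J=J_1\#J_2\). Then \(G_J=G_{J_1}*_{\langle\mu\rangle}G_{J_2}\) and, with a suitable basepoint, \(P_J=\langle\mu,\lambda_{J_1}\lambda_{J_2}\rangle\). The summand longitude \(\lambda_{J_1}\) commutes with \(\mu\), but it is not in \(P_J\) by the normal form theorem for amalgamated products. So ``JSJ considerations'' do not give this for all nontrivial knots.

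You do not need this fact. Choose the path from the basepoint to its image inside \(\partial M\), where \(F\) is a bundle map. The resulting representative satisfies \(\mu_J\mapsto\mu_J\) and the boundary identity literally. Changing the path within \(\partial M\) only conjugates by an element of the abelian group \(P_J\), which acts trivially on \(P_J\). Equivalently, in your notation \(g\in C_{G_J}(\mu_J)\), so replacing \(\varphi\) by \(x\mapsto g^{-1}\varphi(x)g\) still fixes \(\mu_J\) and makes the identities literal. Nothing afterwards depends on which representative is used, so with this repair your argument is complete.
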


\begin{proof}
Let
\(
F:(S^4,\Sigma_{g,a,J})\to(S^4,\Sigma_{g,a,J})
\)
be an orientation-preserving pair diffeomorphism, and let
\(
f\in\Mod(\Sigma_g)
\)
be the induced mapping class under the canonical marking.

By Proposition~\ref{prop:complement-group}, the complement group is identified
with
\(
G_J=\pi_1(E(J)).
\)
Using Convention~\ref{conv:basepoints-peripheral}, the induced automorphism
\(
\varphi:G_J\to G_J
\)
satisfies
\(
\varphi(\mu_J)=\mu_J.
\)

By uniqueness of tubular neighborhoods and isotopy extension
\cite[Chapter~4, Theorem~5.3 and Chapter~8, Theorem~1.3]{Hirsch1976}, after an
isotopy through pair diffeomorphisms supported in a tubular neighborhood of
the surface and fixed on the surface, assume that \(F\) is an oriented
normal-bundle map near \(\Sigma_{g,a,J}\).  This isotopy does not change the
induced mapping class \(f\) or the induced outer automorphism of the
complement group.  There is no ambient boundary condition to preserve in this
normalization.

The resulting restriction of \(F\) to the surface-boundary circle bundle
\[
\partial\nu\Sigma_{g,a,J}\cong \Sigma_g\times S^1_{\mu_\Sigma}
\]
preserves the oriented meridian circle.  Hence, for every based loop
\(c\in\pi_1(\Sigma_g)\), viewed as a loop in the surface factor of
\(\Sigma_g\times S^1_{\mu_\Sigma}\), we have
\[
F_*(c)=f_*(c)\mu_\Sigma^{u([c])}
\]
in
\(
\pi_1(\Sigma_g\times S^1_{\mu_\Sigma})
\cong
\pi_1(\Sigma_g)\times \langle\mu_\Sigma\rangle,
\)
for some homomorphism
\[
u:H_1(\Sigma_g;\mathbb Z)\to\mathbb Z.
\]
Using the boundary formula of Proposition~\ref{prop:complement-group},
naturality gives
\[
\varphi(\lambda_J^{a\cdot c})
=
\lambda_J^{a\cdot f_*(c)}\mu_J^{u(c)}
\]
for every loop \(c\subset\Sigma_g\). Choose \(c_0\subset\Sigma_g\) with
\(
a\cdot c_0=1.
\)
Then
\[
\varphi(\lambda_J)
=
\lambda_J^{a\cdot f_*(c_0)}\mu_J^{u(c_0)}
\in P_J.
\]
Since
\(
\varphi(\mu_J)=\mu_J,
\)
we have
\(
\varphi(P_J)\subseteq P_J.
\)

Apply the same argument to \(F^{-1}\).  Its induced automorphism is
\(\varphi^{-1}\), and it also preserves the oriented meridian.  Therefore
\(
\varphi^{-1}(P_J)\subseteq P_J.
\)
Equivalently,
\(
P_J\subseteq \varphi(P_J).
\)
Hence
\(
\varphi(P_J)=P_J.
\)

Thus \(\varphi|_{P_J}\) is an automorphism of the free abelian group
\[
P_J=\langle\mu_J,\lambda_J\rangle\cong\mathbb Z^2
\]
which fixes \(\mu_J\).  Therefore
\(
\varphi(\lambda_J)=\mu_J^k\lambda_J^\varepsilon
\)
for some
\[
k\in\mathbb Z,
\qquad
\varepsilon\in\{\pm1\}.
\]
Since
\(
H_1(E(J);\mathbb Z)\cong\mathbb Z\langle\mu_J\rangle
\)
and
\(
[\lambda_J]=0\in H_1(E(J);\mathbb Z),
\)
we have
\[
0
=
\varphi_*[\lambda_J]
=
[\mu_J^k\lambda_J^\varepsilon]
=
k[\mu_J].
\]
Thus
\(
k=0,
\)
and therefore
\(
\varphi(\lambda_J)=\lambda_J^\varepsilon.
\)

By Theorem~\ref{thm:peripheral-realization}, this peripheral action is realized
by a diffeomorphism
\[
h:E(J)\to E(J)
\]
with
\(
h_*(\mu_J)=\mu_J\) and \(
h_*(\lambda_J)=\lambda_J^\varepsilon.
\)
Hence
\(
\varepsilon\in\Gamma_\mu(J).
\)
Since
\(
\varphi(\lambda_J)=\lambda_J^\varepsilon,
\)
we obtain
\[
\lambda_J^{\varepsilon(a\cdot c)}
=
\lambda_J^{a\cdot f_*(c)}\mu_J^{u(c)}.
\]
By Convention~\ref{conv:basepoints-peripheral}, the peripheral subgroup
\(
P_J=\langle\mu_J,\lambda_J\rangle\cong\mathbb Z^2
\)
injects into \(G_J\), so the \(\mu_J\)- and \(\lambda_J\)-coordinates are
independent.  Hence
\(
u(c)=0\) and \(
a\cdot f_*(c)=\varepsilon(a\cdot c)
\)
for all \(c\). Since \(f_*\) preserves the algebraic intersection form,
\(
a\cdot f_*(c)=f_*^{-1}(a)\cdot c.
\)
By nondegeneracy of the intersection pairing,
\(
f_*^{-1}[a]=\varepsilon[a],
\)
and therefore
\(
f_*[a]=\varepsilon[a].
\)
Thus
\[
f\in\Stab_{\Mod(\Sigma_g)}(\Gamma_\mu(J)\cdot[a]).
\]

Finally, \(F\) preserves the induced spin structure on the surface.  By
Lemma~\ref{lem:rokhlin}, ordinary untwisted rim-surgery has
\(
q_{\Sigma_{g,a,J}}=q_0
\)
under the canonical marking.  Hence
\(
f^*q_0=q_0.
\)
Therefore
\[
f\in
\Stab_{\Mod(\Sigma_g)}(q_0)
\cap
\Stab_{\Mod(\Sigma_g)}(\Gamma_\mu(J)\cdot[a]).
\]
\end{proof}

\section{The local commutator move}\label{sec:local-commutator}

Let
\[
P\cong\Sigma_{1,2},\qquad \partial P=a\cup(-a'),
\]
and choose oriented embedded simple closed curves \(x,y\subset P\) meeting
transversely in exactly one point \(q\), with positive local intersection
sign.  In particular,
\(
x\cdot y=1.
\)
In the global application \(P\) will be an embedded subsurface of
\(\Sigma_g\).  In this section, it is treated as an abstract model.

\begin{convention}[boundary word and endpoint collars]
\label{conv:local-boundary-word}
Let
\[
Q=N(x\cup y)\subset\operatorname{int}P
\]
be an oriented regular neighborhood.  Then \(Q\cong\Sigma_{1,1}\).  Choose a
basepoint \(p\in\partial Q\) and disjoint paths from \(p\) to \(x\) and
\(y\).  With these orientations, the boundary of
\(Q\), oriented as the boundary of \(Q\), represents
\[
\partial Q=[x,y]=xyx^{-1}y^{-1}\in\pi_1(Q,p).
\]
Let
\[
\Pi=\overline{P\setminus Q}.
\]
Thus \(\Pi\) is a pair of pants.  We choose paths in \(\Pi\) from \(p\) to
\(a\) and \(a'\).  The notation
\[
a'=[x,y]a
\]
means precisely that, with these chosen paths and with the above orientation of
\(\partial Q\), the pair-of-pants relation identifies the based loop
represented by \(a'\) with the product of the based loop \(\partial Q=[x,y]\)
and the based loop represented by \(a\).  Geometrically, \(a'\) is obtained
from \(a\) across the pair of pants \(\Pi\) by inserting the boundary loop of
the genus-one piece \(Q\).

Whenever \(a\) and \(a'\) define rim tori in this local model, fixed collar
pushoffs of the boundary components into \(P\) are understood.  Equivalently,
in the global application one works in a collar enlargement
\(P\subset\operatorname{int}P^+\), and the tori
\[
a\times S^1_\beta,\qquad a'\times S^1_\beta
\]
are taken in \(P^+\times S^1_\beta\).  The product meridian framing is the
ordered normal framing given by the surface-normal direction in \(P^+\) and
the collar direction in \(P^+\times S^1_\beta\times[0,1]\).
\end{convention}

Let
\[
W_{\mathrm{loc}}
=
P\times S^1_\beta\times[0,1]\cup h_x\cup h_y,
\]
where \(h_x\) and \(h_y\) are relative \(2\)-handles attached to
\(P\times S^1_\beta\times\{1\}\) along
\[
x\times\{0\},\qquad y\times\{1/2\},
\]
respectively.  Here \(0,1/2\in S^1_\beta=\mathbb R/\mathbb Z\).  We call 
\(
P\times S^1_\beta\times\{0\}
\)
the outer boundary.
\begin{figure}[ht]
\centering
\begin{tikzpicture}[scale=0.78, every node/.style={font=\small}]

    \draw[line width=0.8pt] (0,0) rectangle (6,6);

    \fill[gray!18] (0,2.6) rectangle (6,3.4);
    \fill[gray!18] (2.6,0) rectangle (3.4,6);

    \fill[white] (1.7,4.3) circle (0.34);
    \draw[line width=0.8pt] (1.7,4.3) circle (0.34);

    \fill[white] (4.3,1.7) circle (0.34);
    \draw[line width=0.8pt] (4.3,1.7) circle (0.34);

    \draw[blue, line width=1.0pt] (0,3) -- (6,3);
    \draw[red, line width=1.0pt] (3,0) -- (3,6);

    \node[blue] at (5.45,3.23) {$x$};
    \node[red] at (3.22,5.45) {$y$};

    \node at (1.00,4.85) {$a$};
    \node at (4.85,0.95) {$a'$};

    \node[gray!70!black] at (4.75,4.55) {$Q=\Sigma_{1,1}$};
    \draw[->, line width=0.5pt, draw=gray!60] (4.38,4.28) -- (3.78,3.78);

    \node at (3,-0.62) {$P\cong \Sigma_{1,2}$};

    \draw[->, line width=0.7pt] (2.1,6.20) -- (3.9,6.20);
    \draw[->, line width=0.7pt] (2.1,-0.20) -- (3.9,-0.20);

    \draw[->, line width=0.7pt] (-0.20,2.1) -- (-0.20,3.9);
    \draw[->, line width=0.7pt] (6.20,2.1) -- (6.20,3.9);
\end{tikzpicture}
\caption{A square model of the genus-one subsurface \(P\cong \Sigma_{1,2}\), with boundary components \(a\) and \(a'\), standard curves \(x\) and \(y\), and a regular neighborhood \(Q=\Sigma_{1,1}\) of \(x\cup y\). Opposite sides are identified according to the arrows.}
\label{fig:sigma12-Q}
\end{figure}
\begin{convention}[product-framed surgery]\label{conv:product-framed-surgery}
For a curve
\[
z\times\{\theta_z\}\subset P\times S^1_\beta,
\]
the product framing is the framing whose longitude is the surface pushoff of
\(z\subset P\) at the same \(\beta\)-level.  Product-framed surgery means Dehn
surgery whose filling slope is this product longitude.
\end{convention}

\begin{lemma}[surgery on \(Q\times S^1_\beta\)]\label{lem:Q-surgery-solid-torus}
Let \(x,y\subset P\) be oriented embedded simple closed curves meeting
transversely in exactly one point with positive local intersection sign, and
let \(Q=N(x\cup y)\subset P\).  Then \(Q\cong\Sigma_{1,1}\),
\(
x\cdot y=1,
\)
and
\(
\partial Q=[x,y].
\)
Product-framed surgery on
\[
x\times\{0\},\qquad y\times\{1/2\}
\subset Q\times S^1_\beta
\]
gives
\(
D^2_{\partial Q}\times S^1_\beta,
\)
where the meridian is \(\partial Q=[x,y]\) and the longitude is the
\(S^1_\beta\)-factor.
\end{lemma}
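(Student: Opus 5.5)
The first three assertions are bookkeeping from Convention~\ref{conv:local-boundary-word}. A regular neighborhood of two simple closed curves meeting transversely once is a once-punctured torus, so \(Q\cong\Sigma_{1,1}\). The single positive transverse intersection gives \(x\cdot y=1\). With the chosen basepoint paths, the standard presentation of \(\pi_1(\Sigma_{1,1})\) as free on \(x,y\) has boundary word \(xyx^{-1}y^{-1}\), which is \(\partial Q=[x,y]\). The content of the lemma is the surgery identification. My plan is to reduce it to a \(3\)-dimensional Kirby-type computation.

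The key step is to view \(Q\times S^1_\beta\) as the complement of a link and then do the surgeries explicitly. I will use the standard fact that \(T^2\times S^1=T^3\) is \(0\)-surgery on the Borromean rings. The boundary of \(Q\times S^1_\beta\) is the torus \(\partial Q\times S^1_\beta\), and \(Q\times S^1_\beta\) is \(T^3\) minus a neighborhood of a fibre \(\{pt\}\times S^1_\beta\). Equivalently, it is the exterior of the third Borromean component, and that component carries the \(S^1_\beta\)-direction. In this picture, \(x\times\{0\}\) and \(y\times\{1/2\}\) sit at distinct \(\beta\)-levels, and I would check that they are isotopic to the cores dual to the remaining two Borromean components. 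Surgery with product framing on a curve \(z\times\{\theta\}\) in a surface times circle kills the class \(z\).

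A more hands-on route avoids the Borromean picture and works directly with the splitting. Cut \(Q\) along \(x\) to get a pair of pants \(Q_x\). Product-framed surgery on \(x\times\{0\}\) in \(Q\times S^1\) replaces \(N(x)\times S^1\cong S^1\times S^1\times I\) at the level \(\beta=0\). The product longitude bounds a disk after surgery, so the result is a "spun" surgery. This is the standard fact that product-framed surgery on \(x\times pt\subset Q\times S^1\) corresponds to surgering \(Q\) along \(x\), fibrewise away from one level. Because \(y\) meets \(x\) once, the second surgery at \(\beta=1/2\) cancels the first handle in the handle structure relative to \(\partial Q\times S^1\).

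The cleanest execution I would commit to is the following. After both surgeries, \(\pi_1\) is \(\langle x,y,\beta\rangle\) modulo the surgery relations. Surgery on \(x\times\{0\}\) along the product longitude imposes \(x=1\), and similarly \(y=1\), so \(\pi_1\cong\mathbb Z\langle\beta\rangle\) with \(\partial Q=[x,y]\) mapping trivially. The result has torus boundary, is irreducible, and has \(\pi_1=\mathbb Z\), so it is a solid torus. In it \(\partial Q\) is nullhomotopic, hence is the meridian by the loop theorem, and \(\beta\) generates \(\pi_1\), so the \(S^1_\beta\)-factor is a longitude.

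The main obstacle is twofold. First, the commutator relations \([x,\beta]=[y,\beta]=1\) in \(Q\times S^1\) conflict with the fact that the surgered curves are not central. I need to verify that the level separation \(0\neq 1/2\) makes the resulting Dehn filling a solid torus rather than something like \(S^1\times S^2\) minus a ball. Second, the \(\pi_1\) computation gives the meridian only up to sign and leaves the longitude unspecified. To fix \(\beta\) as an honest longitude, I would exhibit an explicit product structure. I would try a direct handle description: \(Q\times S^1\) is a \(0\)-handle with three \(1\)-handles \(x,y,\beta\) and \(2\)-handles for the commutators. The two surgeries then cancel the \(x\) and \(y\) handles against the \(2\)-handles \([x,\beta]\) and \([y,\beta]\), which leaves \(D^2\times S^1_\beta\) with \(\partial D^2=\partial Q\).
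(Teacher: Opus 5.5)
\textbf{There is a genuine gap.} Your second paragraph names the right model; it is the route the paper takes. But you stop before its decisive step, and the argument you actually commit to in the fourth paragraph does not go through.

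\textbf{The $\pi_1$ step.} You compute $\pi_1$ of the surgered manifold $N'$ by adding the relations $x=1$, $y=1$ to $\pi_1(Q\times S^1_\beta)\cong\langle x,y\rangle\times\langle\beta\rangle$. Dehn surgery does not act on $\pi_1$ this way. Let $K$ be a surgery curve, $m$ its meridian and $\ell$ the filling slope. Then $\pi_1(N')=\pi_1(N\setminus\nu K)/\langle\!\langle \ell\rangle\!\rangle$, whereas $\pi_1(N)/\langle\!\langle\ell\rangle\!\rangle=\pi_1(N\setminus\nu K)/\langle\!\langle m,\ell\rangle\!\rangle$. The latter is only $\pi_1(N')$ modulo the normal closure of the core of the newly glued solid torus. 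For example, $0$-surgery on the unknot in $S^3$ gives $S^1\times S^2$, but your recipe predicts the trivial group. So $\pi_1(N')\cong\mathbb Z$ is not established. This is exactly the non-centrality tension you flag yourself, and it is left unresolved.

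\textbf{Other steps.} Irreducibility of $N'$ is asserted without argument. The closing handle picture, in which the surgeries ``cancel'' the $1$-handles $x,y$ against the $2$-handles $[x,\beta],[y,\beta]$, is not a precise description of Dehn surgery. Your worry about the longitude, by contrast, is not an obstacle. On $\partial Q\times S^1_\beta$ the curves $\partial Q$ and $\{pt\}\times S^1_\beta$ meet once, so once $\partial Q$ is known to be the meridian, the $\beta$-circle is a longitude.

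\textbf{The missing idea: a slope identification.} You need one that determines $N'$ directly. Write $E_B$ for the exterior of the Borromean rings $B_x\cup B_y\cup B_\beta$, with meridians $\mu_i$ and $0$-framed longitudes $\lambda_i$. Then $Q\times S^1_\beta=E_B(\lambda_x,\lambda_y)$, and $x\times\{0\}$, $y\times\{1/2\}$ are the cores of the $B_x$- and $B_y$-fillings. The meridian of the $B_x$-core is $\lambda_x$, and its product (surface-pushoff) longitude is $\mu_x$; the same holds for $y$. So product-framed surgery is refilling along $\mu_x,\mu_y$. This simply undoes the two $0$-surgeries and returns $S^3\setminus\nu B_\beta$, an unknot exterior, hence a solid torus. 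In it, $\lambda_\beta=\partial Q$ bounds the meridian disk and $\mu_\beta=\beta$ is a longitude. This is the paper's proof.

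\textbf{An alternative repair.} Your ``hands-on'' route can also be made rigorous. Product-framed surgery on $c\times\{t_0\}$ inside a slab $Q\times[t_0-\epsilon,t_0+\epsilon]$ is the union of the two slab halves, each with a $2$-handle attached along $c$, glued along the compressed surface (a disk here). Regroup the surgered $Q\times S^1_\beta$ across the levels $\beta=1/4$ and $\beta=3/4$. Each resulting piece is the $1$-handle dual to one compression followed by the $2$-handle of the other. These cancel because $x$ and $y$ meet once, so each piece is $D^2\times I$ with side $\partial Q\times I$. The union is then $D^2_{\partial Q}\times S^1_\beta$, with the boundary parametrization unchanged.
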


\begin{proof}
Use the standard Kirby diagram for \(T^3\): the Borromean rings
\[
B_x\cup B_y\cup B_\beta\subset S^3,
\]
with framing \(0\) on all three components.  Let
\(\mu_i,\lambda_i\subset\partial\nu B_i\), \(i=x,y,\beta\), denote the
meridian and the preferred \(0\)-framed longitude, respectively. Performing \(0\)-surgery on
all three components gives \(T^3\).

First perform the \(0\)-fillings on \(B_x\) and \(B_y\), leaving the
\(B_\beta\)-boundary unfilled.  The result is
\[
E_B(\lambda_x,\lambda_y),
\qquad
E_B=S^3\setminus \nu(B_x\cup B_y\cup B_\beta).
\]
Equivalently, this is \(T^3\) with a tubular neighborhood of the core circle
of the \(B_\beta\)-filling removed.  Identifying that core circle with the
\(\beta\)-coordinate circle in \(T^3=T^2_{x,y}\times S^1_\beta\), we obtain
\[
E_B(\lambda_x,\lambda_y)
\cong
(T^2_{x,y}\setminus D^2)\times S^1_\beta
=
Q\times S^1_\beta.
\]
Under this identification, the core circles of the \(B_x\)- and
\(B_y\)-fillings are
\[
K_x=x\times\{0\},
\qquad
K_y=y\times\{1/2\},
\]
after separating them by the indicated \(\beta\)-levels.

For the \(\lambda_x\)-filling of \(B_x\), the meridian of the core circle
\(K_x\) is
\(
m_{K_x}=\lambda_x,
\)
while its longitude is
\(
\ell_{K_x}=\mu_x.
\)
In the product identification \(Q\times S^1_\beta\), this longitude is the
surface pushoff of \(x\times\{0\}\), hence is exactly the product framing.
Similarly,
\(
m_{K_y}=\lambda_y\) and \(
\ell_{K_y}=\mu_y,
\)
and \(\ell_{K_y}\) is the product longitude of \(y\times\{1/2\}\).

Therefore product-framed surgery on \(K_x\) and \(K_y\) corresponds, in the
Borromean exterior, to filling the \(B_x\)- and \(B_y\)-boundary tori along
\(
\mu_x\) and \(
\mu_y.
\)
But filling along \(\mu_x\) and \(\mu_y\) glues back the original solid tori
\(\nu B_x\) and \(\nu B_y\).  Hence the surgered manifold is
\[
E_B(\mu_x,\mu_y)
\cong
S^3\setminus \nu B_\beta.
\]
Since \(B_\beta\) is an unknot,
\[
S^3\setminus \nu B_\beta\cong D^2\times S^1.
\]

It remains to identify the boundary slopes.  The remaining boundary torus is
\(\partial\nu B_\beta\).  In the \(T^3\)-description, the missing
\(\lambda_\beta\)-filling caps the boundary circle of
\(T^2_{x,y}\setminus D^2\).  Thus, after orienting \(B_\beta\) consistently,
\(
\lambda_\beta=\partial Q=[x,y].
\)
The meridian \(\mu_\beta\) of \(B_\beta\) is the surviving coordinate circle,
namely
\(
\mu_\beta=\beta.
\)
For the unknot exterior \(S^3\setminus\nu B_\beta\), the meridian disk of the
solid torus has boundary \(\lambda_\beta\), and the core longitude is
\(\mu_\beta\).  Therefore the product-framed surgery result is
\(
D^2_{\partial Q}\times S^1_\beta,
\)
with meridian \(\partial Q=[x,y]\) and longitude \(\beta\).
\end{proof}

\begin{lemma}[surgery on \(P\times S^1_\beta\)]\label{lem:P-surgery-product}
Let \(N_0=P\times S^1_\beta\). Performing product-framed surgery on
\[
x\times\{0\},\qquad y\times\{1/2\}
\]
turns \(N_0\) into
\[
A\times S^1_\beta\cong T^2\times I,
\]
where \(A\) is an annulus with boundary
\(
\partial A=a\cup(-a').
\)
The two boundary components are
\(
a\times S^1_\beta\) and \(
a'\times S^1_\beta.
\)
\end{lemma}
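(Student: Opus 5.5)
The plan is to reduce to Lemma~\ref{lem:Q-surgery-solid-torus} by decomposing \(P=Q\cup\Pi\), where \(\Pi=\overline{P\setminus Q}\) is the pair of pants of Convention~\ref{conv:local-boundary-word}, with boundary components \(a\), \(-a'\) and \(\partial Q\). Since the surgery curves \(x\times\{0\}\) and \(y\times\{1/2\}\) lie in \(\operatorname{int}Q\times S^1_\beta\), the surgery leaves \(\Pi\times S^1_\beta\) untouched. Hence the surgered manifold is
\[
(\Pi\times S^1_\beta)\cup_{\partial Q\times S^1_\beta}\bigl(\text{surgered }Q\times S^1_\beta\bigr).
\]
By Lemma~\ref{lem:Q-surgery-solid-torus}, the second piece is \(D^2_{\partial Q}\times S^1_\beta\). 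Its meridian is \(\partial Q\) and its longitude is the \(\beta\)-factor.

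The key step is to check that this gluing is a product gluing. The common torus is \(\partial Q\times S^1_\beta\), and the solid torus has meridian disk boundary exactly \(\partial Q\times\{pt\}\). The identification in Lemma~\ref{lem:Q-surgery-solid-torus} also carries the longitude to the \(S^1_\beta\)-circle, with no shear. So the solid torus is glued as \(D^2\times S^1_\beta\) along \(\partial Q\times S^1_\beta\), compatibly with the product structure.

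It follows that the union is \((\Pi\cup_{\partial Q}D^2)\times S^1_\beta\). Capping one boundary circle of a pair of pants gives an annulus \(A\) with \(\partial A=a\cup(-a')\), so the result is \(A\times S^1_\beta\cong S^1\times I\times S^1\cong T^2\times I\). Its boundary components are \(a\times S^1_\beta\) and \(a'\times S^1_\beta\), as claimed. Collar pushoffs are used as in Convention~\ref{conv:local-boundary-word}, and they do not affect the argument because they are disjoint from \(Q\).

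The main obstacle is the slope bookkeeping in the middle step. I need to confirm that the meridian slope produced by Lemma~\ref{lem:Q-surgery-solid-torus} is \(\partial Q\) itself, not \(\partial Q+k\beta\). That slope must bound the capping disk fiberwise, and only then does the union genuinely factor as a product. The framing and sign issues are already settled in that lemma, whose longitude is identified with \(\beta\) exactly.

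As a sanity check, in \(\pi_1\) the relation \(a'=[x,y]a\) together with \(\partial Q=[x,y]\) becoming trivial after the disk is attached gives \(a'=a\). This is consistent with an annulus.
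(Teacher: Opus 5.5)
Your proposal is correct and follows essentially the same route as the paper: split \(P\times S^1_\beta\) along \(\partial Q\times S^1_\beta\), replace \(Q\times S^1_\beta\) by \(D^2_{\partial Q}\times S^1_\beta\) via Lemma~\ref{lem:Q-surgery-solid-torus}, and observe that this caps the \(\partial Q\)-boundary of the pair of pants \(\Pi\), giving \(A\times S^1_\beta\). Your explicit check that the meridian slope is exactly \(\partial Q\) is the only point that matters for the product conclusion. The longitude identification is not needed, since a solid-torus filling is determined by its meridian slope; the paper leaves this check implicit.
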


\begin{proof}
Let \(Q=N(x\cup y)\subset P\).  By Convention~\ref{conv:local-boundary-word},
\(\partial Q=[x,y]\) and the complementary pair of pants
\(
\Pi=\overline{P\setminus Q}
\)
identifies \(a'\) with \([x,y]a\).  Thus capping the \(\partial Q\)-boundary
component of \(\Pi\) gives an annulus \(A\) with
\(
\partial A=a\cup(-a').
\)

The decomposition
\[
P\times S^1_\beta
=
(Q\times S^1_\beta)
\cup_{\partial Q\times S^1_\beta}
(\Pi\times S^1_\beta)
\]
has the surgeries supported in \(Q\times S^1_\beta\).  By
Lemma~\ref{lem:Q-surgery-solid-torus}, the surgered copy of
\(Q\times S^1_\beta\) is
\(
D^2_{\partial Q}\times S^1_\beta.
\)
Gluing this solid torus to \(\Pi\times S^1_\beta\) caps the
\(\partial Q\)-boundary component of \(\Pi\).  Hence the surgered manifold is
\(
A\times S^1_\beta\cong T^2\times I.
\)
\end{proof}

\begin{lemma}[embedded local surgery trace]
\label{lem:product-trace-isotopy}
Fix \(r_0\in(0,1)\).  Using the collar pushoffs of
Convention~\ref{conv:local-boundary-word}, set
\[
R_{a'}=a'\times S^1_\beta\times\{r_0\},
\qquad
R_a=a\times S^1_\beta\times\{r_0\}.
\]
Then \(W_{\mathrm{loc}}\) contains a compact embedded oriented \(3\)-manifold
\[
\mathcal T\cong A\times S^1_\beta\cong T^2\times[0,1]
\]
with
\(
\partial\mathcal T=R_{a'}\sqcup(-R_a).
\)
Moreover, \(\mathcal T\) is disjoint from a collar of the local outer boundary
\(P\times S^1_\beta\times\{0\}\); near \(R_{a'}\) and \(R_a\) it agrees with
product collars in \(P\times S^1_\beta\times\{r_0\}\); and the slices of
\(\mathcal T\cong T^2\times[0,1]\) give an isotopy from \(R_{a'}\) to \(R_a\).
This isotopy extends to an ambient isotopy supported in an arbitrarily small
regular neighborhood of \(\mathcal T\), fixed near
\(P\times S^1_\beta\times\{0\}\).  Consequently,
\[
W_{\mathrm{loc}}\setminus\nu R_{a'}
\cong
W_{\mathrm{loc}}\setminus\nu R_a
\]
rel the local outer boundary $P\times S^1_{\beta}\times\{0\}$.
\end{lemma}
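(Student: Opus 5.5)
The plan is to build \(\mathcal T\) explicitly as a union of three pieces, each lying in a different part of \(W_{\mathrm{loc}}\). First, a horizontal piece at level \(r_0\) over the pair of pants \(\Pi\). Second, a vertical collar over \(\partial Q\). Third, the surgered copy of \(Q\times S^1_\beta\) sitting in the new upper boundary of \(W_{\mathrm{loc}}\). Lemma~\ref{lem:P-surgery-product} will then identify the union with \(A\times S^1_\beta\). Concretely, with \(Q=N(x\cup y)\) and \(\Pi=\overline{P\setminus Q}\) as in Convention~\ref{conv:local-boundary-word}, set
\[
\mathcal T_1=\Pi\times S^1_\beta\times\{r_0\},\qquad
\mathcal T_2=\partial Q\times S^1_\beta\times[r_0,1],
\]
and let \(\mathcal T_3\) be the part of the upper boundary of \(W_{\mathrm{loc}}\) lying over \(Q\). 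This part is \(Q\times S^1_\beta\times\{1\}\) with neighborhoods of \(x\times\{0\}\) and \(y\times\{1/2\}\) removed and the upper boundaries of \(h_x,h_y\) glued in. The attaching regions of the two handles are disjoint, since they sit at different \(\beta\)-levels. The handles are attached with the product framing of Convention~\ref{conv:product-framed-surgery}. Hence \(\mathcal T_3\) is exactly the product-framed surgery on \(Q\times S^1_\beta\), so Lemma~\ref{lem:Q-surgery-solid-torus} gives \(\mathcal T_3\cong D^2_{\partial Q}\times S^1_\beta\), with boundary \(\partial Q\times S^1_\beta\times\{1\}\).

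Next I check that \(\mathcal T=\mathcal T_1\cup\mathcal T_2\cup\mathcal T_3\) is an embedded compact oriented \(3\)-manifold and compute its boundary. The three pieces meet only along the tori \(\partial Q\times S^1_\beta\times\{r_0\}\) and \(\partial Q\times S^1_\beta\times\{1\}\). Away from these tori they lie in disjoint regions: level \(r_0\) over \(\Pi\), the vertical collar over \(\partial Q\), and the upper boundary over \(Q\) together with the handles. There are corners along the two gluing tori, and I round them by a standard local smoothing in a product neighborhood. Gluing \(\mathcal T_2\cup\mathcal T_3\) onto \(\mathcal T_1\) caps the \(\partial Q\)-boundary of \(\Pi\) times \(S^1_\beta\). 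By the proof of Lemma~\ref{lem:P-surgery-product}, this yields \(A\times S^1_\beta\cong T^2\times[0,1]\). The remaining boundary is \((a\cup -a')\times S^1_\beta\times\{r_0\}\), which in the collar-pushoff convention is \(R_{a'}\sqcup(-R_a)\). I orient \(\mathcal T\) from \(A\) and \(S^1_\beta\). Since every piece lies at levels \(\ge r_0>0\), \(\mathcal T\) avoids a collar of \(P\times S^1_\beta\times\{0\}\). Near \(R_a\) and \(R_{a'}\) the manifold is \(\mathcal T_1\), which is literally a product collar in \(P\times S^1_\beta\times\{r_0\}\).

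For the isotopy, I fix a product structure \(\mathcal T\cong T^2\times[0,1]\) extending the given collars near the two ends. The slices \(T^2\times\{t\}\) form a smooth family of embedded tori from \(R_{a'}\) to \(R_a\). Each slice has trivial normal bundle in \(W_{\mathrm{loc}}\): one normal direction is \(\partial_t\) inside \(\mathcal T\), and the other is a normal to the orientable hypersurface \(\mathcal T\), which exists on a neighborhood. The family therefore extends to an isotopy of tubular neighborhoods. Hirsch's isotopy extension theorem \cite[Chapter~8, Theorem~1.3]{Hirsch1976} then gives an ambient isotopy supported in a small regular neighborhood of \(\mathcal T\), and hence fixed near the outer boundary. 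Its time-one map sends \(\nu R_{a'}\) to \(\nu R_a\), after adjusting by tubular-neighborhood uniqueness. Restricting it gives the stated diffeomorphism \(W_{\mathrm{loc}}\setminus\nu R_{a'}\cong W_{\mathrm{loc}}\setminus\nu R_a\) rel the local outer boundary.

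I expect the main obstacle to be the identification of \(\mathcal T_3\) with the surgered manifold. This requires checking that the upper boundaries of the \(2\)-handles \(h_x,h_y\), attached with the product framing, realize exactly the product-framed Dehn fillings of Lemma~\ref{lem:Q-surgery-solid-torus}, with the meridian of the resulting solid torus equal to \(\partial Q=[x,y]\). Only then does \(\mathcal T_2\) cap off \(\Pi\) correctly rather than producing a different filling. Controlling the corner smoothing so that it stays away from the handle cores is routine by comparison.
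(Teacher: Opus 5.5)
Your construction is essentially the paper's, rearranged. The paper keeps the whole surgered manifold \(\widehat N\cong A_1\times S^1_\beta\) near the top of \(W_{\mathrm{loc}}\) and descends to level \(r_0\) along monotone ramps over collars of \(a\) and \(a'\). You instead keep the pair of pants \(\Pi\) horizontal at level \(r_0\) and climb to the top only along the vertical wall over \(\partial Q\). Your arrangement makes the clause ``agrees with product collars near \(R_a,R_{a'}\)'' automatic. The identification with \(A\times S^1_\beta\) is the same capping argument as in Lemma~\ref{lem:P-surgery-product}. The step you flag as the main obstacle is standard, and the paper handles it the same way: the new boundary piece \(D^2_z\times\partial D^2_{\mathrm{fr},z}\) of a \(2\)-handle is the solid-torus filling along the framing longitude. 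With product framing, this is exactly the surgery of Lemma~\ref{lem:Q-surgery-solid-torus}.

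There is, however, one step that fails as written. You take \(\mathcal T_3\) to be literally part of the upper boundary of \(W_{\mathrm{loc}}\), and \(\mathcal T_2\) reaches that boundary along \(\partial Q\times S^1_\beta\times\{1\}\). The slices of any product structure \(\mathcal T\cong T^2\times[0,1]\) cover all of \(\mathcal T\), so some slice tori meet \(\mathcal T_3\subset\partial W_{\mathrm{loc}}\). Every diffeomorphism of \(W_{\mathrm{loc}}\) preserves \(\partial W_{\mathrm{loc}}\), so no ambient isotopy can carry the interior torus \(R_{a'}\) onto such a slice. Correspondingly, Hirsch's isotopy extension theorem applies only to isotopies lying in the interior (or entirely in the boundary), and your two-sided-normal argument breaks down along \(\mathcal T_3\).

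The repair is the one the paper makes explicitly:
\begin{enumerate}
\item After smoothing corners, choose an inward collar of the post-handle upper boundary which is the vertical product collar away from the handles, in particular near \(\partial Q\times S^1_\beta\times\{1\}\).
\item Replace \(\mathcal T_3\) by its pushoff at a small depth \(\epsilon\).
\item Shorten the wall to \(\partial Q\times S^1_\beta\times[r_0,1-\epsilon]\), with \(r_0<1-\epsilon\), and round the corners.
\end{enumerate}
With \(\mathcal T\) in the interior, the rest of your argument goes through.

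A minor point: the boundary you compute, \((a\cup -a')\times S^1_\beta\times\{r_0\}\), has the opposite sign from \(R_{a'}\sqcup(-R_a)\). Orient \(A\) so that \(\partial A=a'\sqcup(-a)\), as the paper does.
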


\begin{proof}
Write
\(
N=P\times S^1_\beta.
\)
The handles \(h_x\) and \(h_y\) are attached to \(N\times\{1\}\) along
\[
K_x=x\times\{0\}\times\{1\},
\qquad
K_y=y\times\{1/2\}\times\{1\}.
\]
Choose handle coordinates
\[
h_z=D^2_z\times D^2_{\mathrm{fr},z},
\qquad z=x,y,
\]
so that \(\partial D^2_z\times D^2_{\mathrm{fr},z}\) is identified with a
product-framed tubular neighborhood of \(K_z\) in \(N\times\{1\}\).  The
post-handle top boundary of the two-handle trace contains the Dehn-surgered
\(3\)-manifold
\[
\widehat N
=
\Bigl(N\setminus\operatorname{int}(\nu K_x\cup\nu K_y)\Bigr)
\cup
\bigl(D^2_x\times\partial D^2_{\mathrm{fr},x}\bigr)
\cup
\bigl(D^2_y\times\partial D^2_{\mathrm{fr},y}\bigr),
\]
where $\nu K_x$ and $\nu K_y$ are product neighborhoods of $K_x$ and $K_y$ in $N\times \{1\}$ respectively. The disks \(D^2_z\times\{\theta\}\), \(\theta\in\partial D^2_{\mathrm{fr},z}\),
are the core disks of the \(2\)-handle that perform the surgery.

By Lemma~\ref{lem:P-surgery-product},
\(
\widehat N\cong A_1\times S^1_\beta,
\)
where \(A_1\) is an annulus whose boundary components are the copies of
\(a'\) and \(a\) in the post-handle top boundary.  Since the surgery curves are
contained in
\(
Q\times S^1_\beta\subset \operatorname{int}P\times S^1_\beta,
\)
the surgery is supported away from fixed collar neighborhoods of the two
boundary components \(a'\) and \(a\).  Hence, on these collars, the
identification
\(
\widehat N\cong A_1\times S^1_\beta
\)
agrees with the original product structure of \(P\times S^1_\beta\).  In
particular, the boundary collars of \(A_1\) inherit the original surface-collar
coordinates and the original \(S^1_\beta\)-coordinate.

The submanifold \(A_1\times S^1_\beta\) initially lies in the post-surgery top
boundary of the two-handle trace.  Using an inward collar of this post-surgery
boundary, we push \(A_1\times S^1_\beta\) slightly into the interior of
\(W_{\mathrm{loc}}\).  Near the \(a'\)- and \(a\)-ends, this push is chosen to
be product with respect to the endpoint collar coordinates just described.
The push is chosen away from the fixed outer boundary
\(
P\times S^1_\beta\times\{0\}.
\)

We now explain explicitly how the pushed-in copy of
\(A_1\times S^1_\beta\) is joined to the prescribed level-\(r_0\) tori.
Choose disjoint collar annuli
\[
C_{a'}\cong a'\times[0,\varepsilon],
\qquad
C_a\cong a\times[0,\varepsilon]
\]
of \(a'\) and \(a\) in \(P\), with
\[
C_{a'}\cap(x\cup y)=\varnothing,
\qquad
C_a\cap(x\cup y)=\varnothing.
\]
Since the surgeries are supported in \(Q\times S^1_\beta\subset
\operatorname{int}P\times S^1_\beta\), these collars are unchanged by the
surgery and identify with collar neighborhoods of the corresponding boundary
components of the annulus \(A_1\).  Thus, near the \(a'\)- and \(a\)-ends, the
pushed-in copy of \(A_1\times S^1_\beta\) has product collars
\[
[0,\varepsilon) \times a'\times S^1_\beta,
\qquad
[0,\varepsilon) \times a\times S^1_\beta,
\]
with the original \(S^1_\beta\)-coordinate.  Let
\(
T_{a'}^{\mathrm{top}} \text{ and } T_a^{\mathrm{top}}
\)
denote the two boundary tori of the pushed-in copy of
\(A_1\times S^1_\beta\).  For each \(b\in\{a',a\}\), choose a smooth function
\[
\rho_b:[0,\varepsilon]\longrightarrow [r_0,1)
\]
which is equal to \(r_0\) near \(0\), is equal to the inward-pushed top level
near \(\varepsilon\), and is monotone on the intervening interval.  The graph
of \(\rho_b\) in the collar region \(C_b\times S^1_\beta\times[0,1]\) gives a
product ramp
\[
E_b
=
\{(p,s,\theta,\rho_b(s))\mid p\in b,\ s\in[0,\varepsilon],\
\theta\in S^1_\beta\}
\cong b\times[0,\varepsilon]\times S^1_\beta .
\]
Its lower boundary is the prescribed torus $R_b = b\times S^1_\beta\times\{r_0\}$, and its upper boundary is the corresponding boundary torus
\(T_b^{\mathrm{top}}\) of the pushed-in \(A_1\times S^1_\beta\).  Thus the
gluing is along a torus; the adjacent collar of \(T_b^{\mathrm{top}}\) inside
\(A_1\times S^1_\beta\) is used only to make the union smooth after rounding
corners.  Near \(b\times S^1_\beta\times\{r_0\}\), the function \(\rho_b\) is
constant, so \(E_b\) agrees with a horizontal product collar in
\(P\times S^1_\beta\times\{r_0\}\).  Because the collars \(C_a\) and \(C_{a'}\)
are disjoint from \(x\cup y\), the ramp pieces are disjoint from the attaching
regions of \(h_x\) and \(h_y\).  Therefore
\[
\mathcal T
=
E_{a'}\cup (A_1\times S^1_\beta)\cup E_a,
\]
after rounding corners, is a compact embedded oriented \(3\)-manifold
diffeomorphic to \(A\times S^1_\beta\).  Orient \(A\) so that
\(
\partial A=a'\sqcup(-a).
\)
Then
\(
\partial\mathcal T
=
R_{a'}\sqcup(-R_a).
\)

Embeddedness follows from the construction: the two handle-core disk families
lie in distinct \(2\)-handles, the endpoint collars are disjoint from the
attaching regions, and the pieces are glued only along the prescribed boundary
annuli.  Choose a product parametrization
\[
j:T^2\times[0,1]\longrightarrow \mathcal T
\]
with \(j(T^2\times\{0\})=R_{a'}\) and \(j(T^2\times\{1\})=R_a\).  The images of
the slices are pairwise disjoint, so they give an isotopy of embedded tori.
The trace is disjoint from a collar of \(P\times S^1_\beta\times\{0\}\), and a
regular neighborhood of \(\mathcal T\) may be chosen disjoint from that collar.
Applying isotopy extension in this regular neighborhood
\cite[Chapter~8, Theorem~1.3]{Hirsch1976} gives an ambient isotopy fixed near
the local outer boundary.  Removing tubular neighborhoods of
the initial and final tori gives the claimed diffeomorphism of drilled
exteriors.  The ambient extension used to compare the internal meridians is
chosen on the full framed tubular neighborhoods in
Lemma~\ref{lem:product-trace-basis}.
\end{proof}

\begin{lemma}[endpoint normal-framing comparison]
\label{lem:product-trace-basis}
The isotopy of the rim tori determined by \(\mathcal T\) has an ambient
extension carrying the ordered internal basis by
\[
\alpha_{a'}\mapsto\alpha_a,
\qquad
\beta\mapsto\beta,
\qquad
\delta_{a'}\mapsto\delta_a.
\]
Equivalently, the normal-framing difference class in
\(
H^1(T^2;\mathbb Z)
\)
is zero.  In particular, no terms
\[
\delta\mapsto \delta+\tau\alpha
\qquad\text{or}\qquad
\delta\mapsto \delta+\sigma\beta
\]
occur.
\end{lemma}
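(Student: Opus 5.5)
The plan is to build the ambient extension directly from the product structure of the trace \(\mathcal T\cong A_1\times S^1_\beta\) constructed in Lemma~\ref{lem:product-trace-isotopy}, rather than from an abstract isotopy extension. Choose the normal framing of \(\mathcal T\) inside \(W_{\mathrm{loc}}\) and use it to build the tubular neighborhood \(\nu\mathcal T\cong\mathcal T\times D^1\times D^1\). The isotopy of rim tori is then the slice isotopy in \(\mathcal T\), and its ambient extension is obtained by moving these framed slices. The induced map on \(Q_{a'}\to Q_a\) is determined by the framing restricted to the two ends. So the statement reduces to two checks. First, the \(T^2\)-directions \((\alpha,\beta)\) are carried correctly. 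Second, the normal framing of \(\mathcal T\) can be chosen to restrict at both ends to the product meridian framing of Convention~\ref{conv:local-boundary-word}.

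\textbf{The tangential directions.} The \(T^2\)-directions are handled by the product structure. The \(S^1_\beta\)-factor of \(A_1\times S^1_\beta\) is literally the original \(\beta\)-coordinate on the endpoint ramps \(E_{a'},E_a\). A fibre \(\{pt\}\times S^1_\beta\) therefore gives a constant \(\beta\)-curve through the whole trace, so \(\beta\mapsto\beta\). For \(\alpha\), take an arc in \(A_1\) from \(a'\) to \(a\) avoiding the surgery region. The boundary circles of the annulus \(A_1\) are isotopic across \(A_1\), which carries \(\alpha_{a'}\) to \(\alpha_a\). No \(\beta\)-shear appears, because the identification \(\widehat N\cong A_1\times S^1_\beta\) of Lemma~\ref{lem:P-surgery-product} respects the \(S^1_\beta\)-factor. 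This in turn holds because the meridian \(\partial Q\) and longitude \(\beta\) in Lemma~\ref{lem:Q-surgery-solid-torus} glue compatibly with \(\Pi\times S^1_\beta\).

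\textbf{The normal framing.} The normal bundle of \(\mathcal T\) in \(W_{\mathrm{loc}}\) is trivial. I would trivialize it by two sections. The first is the inward collar/radial direction of the \([0,1]\)-coordinate. The second is the surface-normal direction of \(A_1\) inside \(\widehat N\), which extends the surface normal in \(P\) on the ramps. Two trivializations differ by a class in \(H^1(\mathcal T;\mathbb Z)\cong H^1(T^2;\mathbb Z)\). Hence it suffices to show that the chosen framing agrees with the product framing on both ends. On the ramps \(E_b\) this is immediate, because they are graphs of \(\rho_b\) in product collars. The content is therefore that the framing defined on the top copy of \(A_1\times S^1_\beta\) is globally a product framing. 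It then has no winding along \(\alpha\) (the \(\tau\)-term) or along \(\beta\) (the \(\sigma\)-term).

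\textbf{Main obstacle.} The hard step is showing that the framing passes through the surgered solid torus \(D^2_{\partial Q}\times S^1_\beta\) without winding. I would handle \(\beta\)-winding and \(\alpha\)-winding separately. For \(\beta\)-winding, I would evaluate the framing on a \(\beta\)-fibre. On a fibre over the complement of \(Q\), the surface-normal and collar directions are \(\beta\)-invariant, so the winding is zero. For \(\alpha\)-winding, I would evaluate along an arc of \(A_1\) crossing from \(a'\) to \(a\). That arc can be chosen inside \(\Pi\), disjoint from the handles, where the framing is the original product framing, so the winding is again zero. Since \(H_1(T^2)\) is generated by these two classes, the difference class in \(H^1(T^2;\mathbb Z)\) vanishes. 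Hence the framed ambient extension sends \(\delta_{a'}\mapsto\delta_a\) with no shear.

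The delicate point is whether the chosen \(\alpha\)-arc and \(\beta\)-fibre avoid \(D^2_{\partial Q}\times S^1_\beta\) consistently. If they do not, I would instead compute the linking and framing change using the \(0\)-framings of the Borromean description in Lemma~\ref{lem:Q-surgery-solid-torus}, where every framing is \(0\).
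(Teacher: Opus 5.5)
Your core construction is the paper's. The two vectors you describe are the direction inside \(A_1\) normal to the circle slices (extending the surface normal of \(b\) in \(P\) on the ramps) and the oriented normal line of \(\mathcal T\subset W_{\mathrm{loc}}\). Together they are exactly the paper's global frame \((n_t,\eta_t)\) of the normal planes of the moving tori \(T_t=c_t\times S^1_\beta\). The paper likewise notes that near \(R_{a'}\) and \(R_a\) the trace is a horizontal product collar, so the endpoint frames are the product meridian frames. It then applies isotopy extension to the resulting family of framed tubular embeddings \(e_t\), obtaining \(\Phi_1\circ e_0=e_1\). Once you have checked agreement on the ramps, as you do, your proof is complete. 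Your remark that the product structure on \(\widehat N\) extends the one on \(\Pi\times S^1_\beta\), so that \(\alpha\mapsto\alpha\) and \(\beta\mapsto\beta\) with no \(\beta\)-shear, justifies a point the paper takes for granted when it picks the product parametrization.

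The step you call the main obstacle is not needed, and you should delete it rather than try to carry it out. The difference class in \(H^1(T^2;\mathbb Z)\) compares the frame transported along the trace with the prescribed frame at the \(a\)-end. A single continuous frame over all of \(\mathcal T\) that equals the prescribed frames at both ends forces this class to vanish, whatever the frame does over \(D^2_{\partial Q}\times S^1_\beta\). In that middle region there is no reference product framing to measure winding against.

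Your proposed computation is also wrong as stated. The \(\alpha\)-coefficient of a class in \(H^1(T^2;\mathbb Z)\) is its value on the \(\alpha\)-circle, a slice \(c_t\times\{\theta\}\). An arc of \(A_1\) from \(a'\) to \(a\) runs in the isotopy direction and is not a cycle of \(T^2\). So the \(\beta\)-fibre and the crossing arc do not give a basis of \(H_1(T^2)\).

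Finally, fix a dimension slip. \(\mathcal T\) has codimension one, so \(\nu\mathcal T\) is a line bundle, not \(\mathcal T\times D^1\times D^1\). The rank-two bundle you are actually framing is the normal bundle of each slice \(T_t\), and one of its frame vectors is tangent to \(\mathcal T\). Read literally, ``the normal direction of \(A_1\) inside \(\widehat N\cong A_1\times S^1_\beta\)'' is the \(\beta\)-direction, which is tangent to the slices. What you mean is the normal to \(c_t\) inside \(A_1\).
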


\begin{proof}
Write
\(
\mathcal T=A\times S^1_\beta,
\)
and choose a product parametrization whose oriented circle slices are
\(c_t\times S^1_\beta\), with \(c_0=a'\) and \(c_1=a\).  Put
\(
T_t=c_t\times S^1_\beta.
\)
Let \(n_t\) be the tangent vector field in \(A\), normal to \(c_t\), pointing
in the direction of increasing \(t\), and let \(\eta_t\) be the oriented
normal line to \(\mathcal T\subset W_{\mathrm{loc}}\), chosen so that
\[
(TT_t,n_t,\eta_t)
\]
gives the orientation of \(W_{\mathrm{loc}}\).  The pair
\((n_t,\eta_t)\) is a smooth framing of the normal \(2\)-plane bundle of the
moving tori.

Near \(R_{a'}\) and \(R_a\), Lemma~\ref{lem:product-trace-isotopy} identifies
the trace with the fixed product collars.  Hence the endpoint frames
\((n_0,\eta_0)\) and \((n_1,\eta_1)\) are exactly the product meridian
frames used to define \(\delta_{a'}\) and \(\delta_a\).  Choose a smooth
family of tubular-neighborhood embeddings
\[
e_t:T^2\times D^2\longrightarrow W_{\mathrm{loc}}
\]
whose central torus is \(T_t\), whose ordered disk-coordinate frame is sent
to \((n_t,\eta_t)\), and whose endpoint embeddings are the fixed product
tubular coordinates of \(R_{a'}\) and \(R_a\).  The neighborhoods may be
chosen uniformly small and disjoint from the protected collar of the local
outer boundary.

Apply isotopy extension to the embeddings \(e_t\) of the full tubular
neighborhood \cite[Chapter~8, Theorem~1.3]{Hirsch1976}.  Concretely, on \(e_t(T^2\times D^2)\) the vector field
\[
\frac{d}{dt}e_t\circ e_t^{-1}
\]
extends, using a cutoff, to a compactly supported vector field in the interior
of \(W_{\mathrm{loc}}\).  Its flow \(\Phi_t\) satisfies
\(
\Phi_t\circ e_0=e_t.
\)
Thus \(\Phi_1\) carries the entire product normal disk coordinate at
\(R_{a'}\) to the fixed product normal disk coordinate at \(R_a\).

The first coordinate of the product parametrization moves the oriented
\(a'\)-circle across \(A\), and the second coordinate is the unchanged
\(S^1_\beta\)-factor.  Hence
\[
\alpha_{a'}\mapsto\alpha_a,
\qquad
\beta\mapsto\beta.
\]
Since the boundary of each normal disk is carried coordinatewise,
\(
\delta_{a'}\mapsto\delta_a.
\)
A shear of the meridian would require a nontrivial rotation of the disk
coordinate by a map \(T^2\to SO(2)\); the identity
\(\Phi_1\circ e_0=e_1\) excludes such a rotation.  Therefore both possible
shear coefficients vanish.
\end{proof}

\begin{theorem}[product-framed local commutator move]\label{thm:local-commutator}
Let
\[
P\cong\Sigma_{1,2},
\qquad
\partial P=a\cup(-a'),
\]
and choose oriented embedded simple closed curves \(x,y\subset P\) meeting
transversely in exactly one point \(q\), with positive local intersection
sign, and satisfying
\[
a'=[x,y]a
\]
in the sense of Convention~\ref{conv:local-boundary-word}.  Let
\[
W_{\mathrm{loc}}
=
P\times S^1_\beta\times[0,1]\cup h_x\cup h_y,
\]
where \(h_x\) and \(h_y\) are attached along
\[
x\times\{0\},
\qquad
y\times\{1/2\},
\]
with product framings relative to \(P\times S^1_\beta\).  Then there is a
compactly supported diffeomorphism
\[
W_{\mathrm{loc}}\setminus\nu R_{a'}
\cong
W_{\mathrm{loc}}\setminus\nu R_a
\]
fixed on the outer boundary and carrying
\[
(\alpha_{a'},\beta,\delta_{a'})
\mapsto
(\alpha_a,\beta,\delta_a).
\]
\end{theorem}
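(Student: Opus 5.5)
The theorem is essentially the assembly of Lemmas~\ref{lem:Q-surgery-solid-torus}--\ref{lem:product-trace-basis}, so the plan is to chain them. The ambient isotopy provides a diffeomorphism fixed near the outer boundary, and its restriction to the drilled complements is the required map.

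First, I invoke Lemma~\ref{lem:P-surgery-product}, which uses Lemma~\ref{lem:Q-surgery-solid-torus}. It shows that product-framed surgery on \(x\times\{0\}\) and \(y\times\{1/2\}\) turns \(P\times S^1_\beta\) into \(A\times S^1_\beta\), where \(\partial A=a\cup(-a')\). The relation \(a'=[x,y]a\) of Convention~\ref{conv:local-boundary-word} is what makes the capped pair of pants an annulus. This identifies the post-handle top boundary of the trace \(W_{\mathrm{loc}}\).

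Second, Lemma~\ref{lem:product-trace-isotopy} pushes this annulus-times-circle slightly into the interior of \(W_{\mathrm{loc}}\). It then attaches product ramps in the endpoint collars to obtain the embedded trace \(\mathcal T\cong T^2\times[0,1]\) from \(R_{a'}\) to \(R_a\). This trace is disjoint from a collar of \(P\times S^1_\beta\times\{0\}\), so the isotopy of tori extends to an ambient isotopy \(\Phi_t\) that is compactly supported in a small regular neighborhood of \(\mathcal T\) and fixed near the outer boundary.

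Third, I use the full-tubular-neighborhood version from Lemma~\ref{lem:product-trace-basis}. Choose the family of framed embeddings \(e_t\) with \(\Phi_t\circ e_0=e_t\), where the endpoint embeddings are the fixed product tubular coordinates of \(R_{a'}\) and \(R_a\). Then \(\Phi_1\) maps \(\nu R_{a'}\) onto \(\nu R_a\) coordinatewise. Its restriction to the drilled complement is therefore a diffeomorphism
\[
W_{\mathrm{loc}}\setminus\nu R_{a'}\to W_{\mathrm{loc}}\setminus\nu R_a .
\]
It is compactly supported and is the identity on the outer boundary, since \(\Phi_1\) is the identity near that boundary. On \(\partial\nu R_{a'}\) it is \(e_1\circ e_0^{-1}\), which by Lemma~\ref{lem:product-trace-basis} sends \(\alpha_{a'}\mapsto\alpha_a\), \(\beta\mapsto\beta\) and \(\delta_{a'}\mapsto\delta_a\), with no shear terms.

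The main obstacle is the framing and basis bookkeeping in this last step. The trace-induced normal frame \((n_t,\eta_t)\) has to agree at both ends with the product meridian frames defining \(\delta_{a'}\) and \(\delta_a\). The \(\beta\)-direction also has to remain the literal \(S^1_\beta\)-factor throughout, which needs the product-framed (not arbitrarily framed) surgeries from Convention~\ref{conv:product-framed-surgery}, and orientations must be tracked so that \(\partial\mathcal T=R_{a'}\sqcup(-R_a)\) yields \(+\alpha\) rather than \(-\alpha\). I would check these points using the fact that the ramps are constant-level product collars near \(r_0\). Because of that, \(n_t\) is the collar direction of \(P\) and \(\eta_t\) is the \([0,1]\)-direction at both ends, which is exactly the product meridian framing of Convention~\ref{conv:local-boundary-word}.
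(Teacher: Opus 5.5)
Your proposal is correct and is essentially the paper's proof. The embedded trace of Lemma~\ref{lem:product-trace-isotopy}, whose construction already uses Lemma~\ref{lem:P-surgery-product}, gives an ambient isotopy fixed near the outer boundary, and the framed tubular-neighborhood extension of Lemma~\ref{lem:product-trace-basis} makes its restriction to the drilled exteriors carry \((\alpha_{a'},\beta,\delta_{a'})\) exactly to \((\alpha_a,\beta,\delta_a)\). For the sign bookkeeping you flag: \(n_t\) points into \(P\) at \(a'\) but out of \(P\) at \(a\), and because \(\partial P=a\cup(-a')\) these are the same side-normal relative to the oriented curves, so \(\eta_t\) is the same vertical collar direction at both ends and the two endpoint frames follow one product-meridian convention.
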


\begin{proof}
By Lemma~\ref{lem:product-trace-isotopy}, the product-framed local surgery
trace contains a compact embedded \(T^2\times[0,1]\) from \(R_{a'}\) to
\(R_a\), disjoint from a collar of the local outer boundary.  The slices of
this trace give an ambient isotopy fixed near the local outer boundary, and
therefore a diffeomorphism of drilled exteriors.  By
Lemma~\ref{lem:product-trace-basis}, the induced map on the internal basis is
\(
(\alpha_{a'},\beta,\delta_{a'})
\mapsto
(\alpha_a,\beta,\delta_a).
\)
\end{proof}

\section{Product-framed marked relative handle charts}\label{sec:handle-charts}

Theorem \ref{thm:local-commutator} from Section~\ref{sec:local-commutator} applies only to
product-framed local handles.  This section proves that the fixed marked
exterior
\[
M=S^4\setminus\nu\Sigma_g^0,
\qquad
Y=\partial M=\Sigma_g\times S^1_\beta,
\]
supplies such handles.  The boundary \(Y\) is kept with its fixed product
marking throughout: all isotopies and handle slides are carried out inside
this fixed \(Y\), and no boundary reparametrization is used.

\subsection{Product lifts, endpoint flattening, and balanced bands}

\begin{definition}[product lift, \(\beta\)-degree, and product framing]
\label{def:product-lift}
Let \(C\subset\Sigma_g\) be an oriented embedded curve.  A product lift of
\(C\) to
\(
Y=\Sigma_g\times S^1_\beta
\)
is an embedded graph
\[
\widetilde C=\{(p,\ell_C(p))\mid p\in C\},
\]
where \(\ell_C:C\to S^1_\beta\) is a smooth level function.  Let
\(u_\beta\in H^1(S^1_\beta;\mathbb Z)\) be the positive generator.  Define
\[
\deg_\beta(\widetilde C)
=
\langle \ell_C^*u_\beta,[C]\rangle\in\mathbb Z.
\]
We say that \(\widetilde C\) has zero \(\beta\)-winding if
\(\deg_\beta(\widetilde C)=0\).

A product annular framing of \(\widetilde C\) is defined by choosing an
oriented annular neighborhood
\(
N_\Sigma(C)\cong C\times[-\epsilon,\epsilon]
\)
in \(\Sigma_g\), extending \(\ell_C\) constantly in the normal coordinate,
\[
\overline\ell_C(p,s)=\ell_C(p),
\]
and taking the graph over a surface-normal pushoff \(C^+\subset N_\Sigma(C)\).
For a constant-level lift \(C\times\{\theta\}\), this is the usual surface
pushoff \(C^+\times\{\theta\}\).

If \(I\subset C\) is a compact arc, we say that \(\widetilde C\) is
endpoint-flat on \(I\) if \(\ell_C|_I\) is constant and the extended level
function on the product annulus over \(I\) is constant in both the arc and
normal directions.
\end{definition}

In what follows, the same product-framing convention appears in two equivalent
forms.  For a relative \(2\)-handle attached along a constant-level curve
\(C\times\{\theta\}\subset\Sigma_g\times S^1_\beta\), the product framing is
the handle framing whose attaching longitude is the surface pushoff
\(C^+\times\{\theta\}\).  For a graph lift
\[
\widetilde C=\{(p,\ell_C(p))\mid p\in C\},
\]
the corresponding product annular framing is obtained by extending
\(\ell_C\) constantly in the surface-normal direction.  Thus product-framed
surgery, product-framed relative handles, and product annular framings all
refer to the same choice of surface-pushoff longitude in
\(\Sigma_g\times S^1_\beta\).

\begin{lemma}
\label{lem:zero-winding-to-constant}
Let \(\widetilde C\subset\Sigma_g\times S^1_\beta\) be a zero-winding product
lift of \(C\), with product annular framing.  Then, for every
\(\theta\in S^1_\beta\), \(\widetilde C\) is framed-isotopic in the fixed
boundary \(Y\) to
\(
C\times\{\theta\}
\)
with product framing.
\end{lemma}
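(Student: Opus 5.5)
The plan is to straighten the level function by an explicit homotopy and then promote that homotopy to a framed isotopy inside the fixed boundary \(Y=\Sigma_g\times S^1_\beta\), using only the product structure.

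\emph{Step 1: lift the level function.} Since \(\deg_\beta(\widetilde C)=0\), the map \(\ell_C:C\to S^1_\beta=\mathbb R/\mathbb Z\) induces zero on \(H^1\). It therefore lifts to a smooth real-valued function \(\tilde\ell_C:C\to\mathbb R\). Choose any real lift \(\tilde\theta\) of \(\theta\), and define
\[
\ell_t(p)=\bigl((1-t)\tilde\ell_C(p)+t\tilde\theta\bigr)\bmod 1,\qquad t\in[0,1].
\]
This is a smooth homotopy of level functions from \(\ell_C\) to the constant \(\theta\).

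\emph{Step 2: the isotopy.} Each graph
\[
\widetilde C_t=\{(p,\ell_t(p))\mid p\in C\}
\]
is an embedded curve in \(Y\): the projection to \(\Sigma_g\) restricts to the embedding \(C\), so distinct points of \(C\) give distinct points of \(\widetilde C_t\). The map \((p,t)\mapsto(p,\ell_t(p))\) is therefore a smooth isotopy of embeddings. It ends at \(C\times\{\theta\}\) and takes place entirely in the fixed \(Y\), with no reparametrization of \(\Sigma_g\) or of the \(\beta\)-factor.

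\emph{Step 3: carry the framing along.} I would run the same homotopy on the extended level function \(\overline\ell_t(p,s)=\ell_t(p)\) over the annulus \(N_\Sigma(C)\). The graph of \(\overline\ell_t\) over \(N_\Sigma(C)\) is an embedded annulus in \(Y\) for every \(t\), since it is again a graph over an embedded subsurface. Inside it, the graph over the pushoff \(C^+\) gives a continuous family of product annular framings of \(\widetilde C_t\). At \(t=0\) this is the given framing. At \(t=1\) it is \(C^+\times\{\theta\}\), which is exactly the product framing of \(C\times\{\theta\}\) in the sense of Convention~\ref{conv:product-framed-surgery}. The graph annuli are the images of the embeddings \((p,s)\mapsto(p,s,\overline\ell_t(p,s))\). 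Isotopy extension \cite[Chapter~8, Theorem~1.3]{Hirsch1976}, applied to this family of annular embeddings, then gives an ambient isotopy of \(Y\) that realizes the framed isotopy.

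\emph{Main obstacle.} The only real point is that the framing does not acquire extra twists. I expect this to be settled by the observation that the framing is never tracked abstractly: it is always the graph of the pushoff curve inside the moving graph annulus, so it is product by construction at both ends. The zero-winding hypothesis is used exactly once, in Step 1, to make the straight-line homotopy to a constant level possible. With nonzero winding no such homotopy exists, because \(\deg_\beta\) is an isotopy invariant of graph lifts, and the constant-level target has degree zero.
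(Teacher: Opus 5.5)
Your proposal is correct and follows essentially the same route as the paper: zero winding lets you lift \(\ell_C\) to \(\mathbb R\), you take the straight-line homotopy to a lift of \(\theta\), and you extend it constantly in the surface-normal coordinate of \(N_\Sigma(C)\), so that the graph over the pushoff carries the product annular framing throughout. Your appeal to isotopy extension, which makes the framed isotopy ambient, is a harmless extra detail that the paper leaves implicit.
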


\begin{proof}
Since \(\deg_\beta(\widetilde C)=0\), the level function \(\ell_C\) has a lift to the universal cover $\mathbb{R}$.  Choose lifts
\(\widetilde\ell_C:C\to\mathbb R\) and \(\widetilde\theta\in\mathbb R\).  The
straight-line homotopy
\[
\ell_{C,t}=(1-t)\widetilde\ell_C+t\widetilde\theta\pmod{\mathbb Z}
\]
gives an isotopy through graphs.  Extending \(\ell_{C,t}\) constantly in the
normal coordinate of \(N_\Sigma(C)\) carries the product annular framing
throughout the isotopy.
\end{proof}

\begin{lemma}
\label{lem:endpoint-flattening}
Let \(L\subset Y\) be a finite product-lifted framed link, and let
\(\widetilde C\subset L\) be a zero-winding product lift of
\(C\subset\Sigma_g\) with product annular framing.  Let \(I_0\subset C\) be a
small compact arc.  If \(I_0\) is chosen sufficiently small, there is a framed
isotopy of \(\widetilde C\), supported in a product box over a slightly larger
arc \(I_1\subset C\), after which \(\widetilde C\) is endpoint-flat on
\(I_0\).  The isotopy preserves zero \(\beta\)-winding, preserves the product
annular framing, does not move or cross the other components of \(L\), and
does not change the projected curve
\(C\), its orientation, or its label.
\end{lemma}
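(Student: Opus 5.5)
The plan is to flatten the level function \(\ell_C\) by a compactly supported modification of the graph, working entirely within the product box \(N_\Sigma(I_1)\times J\), where \(J\subset S^1_\beta\) is a short arc. First I choose the arcs. Pick a point \(p_0\in C\) and a compact arc \(I_1\ni p_0\) so small that three conditions hold. The levels \(\ell_C(I_1)\) lie in a short arc \(J\subset S^1_\beta\) of length less than \(1/2\). The product box \(B=N_\Sigma(I_1)\times J'\), with \(J'\) a slightly larger arc containing \(J\), meets \(L\) only in the portion of \(\widetilde C\) (together with its product annular framing) lying over \(I_1\). Finally, \(I_0\subset\operatorname{int}I_1\) is a smaller arc around \(p_0\).

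The disjointness condition is possible because the other components of \(L\) are compact, disjoint from \(\widetilde C\), and product-lifted. If another component \(\widetilde C'\) has \(p\in C'\cap C\) near \(p_0\), its level \(\ell_{C'}(p)\) differs from \(\ell_C(p)\), so I can shrink \(I_1\) and \(J'\) until \(B\) misses \(\widetilde C'\). If instead \(p_0\notin C'\), I shrink the normal width of \(N_\Sigma(I_1)\). Since \(L\) is finite, finitely many shrinkings suffice.

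Next, in a lifted coordinate \(\widetilde\ell_C:I_1\to\mathbb R\) I choose a bump function \(\phi:C\to[0,1]\) with \(\phi\equiv1\) on \(I_0\) and support in \(\operatorname{int}I_1\). I then define
\[
\ell_{C,t}=(1-t\phi)\,\widetilde\ell_C+t\phi\,\widetilde\ell_C(p_0)\pmod{\mathbb Z},
\]
and extend it constantly in the normal coordinate of \(N_\Sigma(C)\), exactly as in Lemma~\ref{lem:zero-winding-to-constant}. Each \(\ell_{C,t}\) is a graph over the same curve \(C\), so the projected curve, its orientation and its label are unchanged. The values of \(\ell_{C,t}\) stay in the convex hull of \(\ell_C(I_1)\), which lies in \(J\), so the motion is supported in \(B\) and cannot cross other components. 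The homotopy is compactly supported in \(I_1\), so the lifted level difference around \(C\) is unchanged and \(\deg_\beta\) remains \(0\). The framing is carried along because the extension stays constant in the normal direction at every time, so the product annular framing is preserved. At \(t=1\), \(\ell_C\) is constant on \(I_0\), and its normal extension is constant in both directions, which is endpoint-flatness.

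The main obstacle is the disjointness step: I must make sure the framing annulus of \(\widetilde C\) over \(I_1\), not only the core, avoids the other components throughout the isotopy. I will handle this by taking the normal width \(\epsilon\) of \(N_\Sigma(I_1)\) smaller than the distance from \(\widetilde C|_{I_1}\) to \(L\setminus\widetilde C\), measured inside \(B\) using the fixed product metric. The isotopy through graphs is embedded automatically, since graphs over an embedded curve are embedded. The framed isotopy then extends to an ambient isotopy of \(Y\) supported in \(B\) by isotopy extension \cite[Chapter~8, Theorem~1.3]{Hirsch1976}.
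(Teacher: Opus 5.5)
Your proposal is correct and follows essentially the same route as the paper. You choose \(I_1\) so that a product box over it misses \(L\setminus\widetilde C\), homotope a real lift of \(\ell_C|_{I_1}\) rel a neighborhood of \(\partial I_1\) to a function constant on \(I_0\), extend constantly in the surface-normal coordinate, and read off invariance of degree, framing, and projection; your explicit bump-function homotopy and your more careful check that the box and framing annulus avoid the other components only fill in details the paper states briefly.
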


\begin{proof}
Choose
\(
I_0\subset\operatorname{int}I_1\subset C
\)
so small that a product box over \(I_1\) is disjoint from
\(L\setminus\widetilde C\).  Since \(I_1\) is contractible,
\(\ell_C|_{I_1}\) has a real lift.  Choose a homotopy of this lifted function,
fixed near \(\partial I_1\), to a function which is constant on \(I_0\).  After
reducing modulo \(\mathbb Z\) and extending by the identity on
\(C\setminus I_1\), taking graphs gives an isotopy supported in the chosen
product box.

Extend the homotopy over the product annular framing by making it constant in
the surface-normal coordinate.  Thus the product annular framing is carried to
the product annular framing of the new graph.  The degree of \(\ell_C:C\to
S^1_\beta\) is unchanged because the level function has only been homotoped.
The projection to \(C\) is unchanged, so the projected curve, its orientation,
and its label are unchanged.  The support was chosen disjoint from all other
components.
\end{proof}

\begin{definition}[balanced collar data and balanced lifted band]
\label{def:balanced-lifted-band}
Let
\[
B=[0,1]\times[-1,1]
\]
be a rectangular surface band.  Write
\[
e_C=\{0\}\times[-1,1],
\qquad
e_D=\{1\}\times[-1,1],
\]
and
\[
s_+=[0,1]\times\{1\},
\qquad
s_-=[0,1]\times\{-1\}.
\]
Suppose \(e_C\) is attached to an endpoint-flat arc of \(C\), and \(e_D\) is
attached to an endpoint-flat arc of a product pushoff \(D^+\).  Let the
constant endpoint levels be \(\theta_C\) and \(\theta_D\).

Balanced collar data on a collar \(U\) of \(\partial B\) is a smooth map
\[
\Lambda_\partial:U\to S^1_\beta
\]
such that it is equal to \(\theta_C\) near \(e_C\), equal to \(\theta_D\) near
\(e_D\), and on collars of the two remaining sides, which we call the long sides, it has the form
\[
\Lambda_\partial(s,1)=\lambda(s),
\qquad
\Lambda_\partial(s,-1)=\lambda(s)
\]
for one smooth path \(\lambda:[0,1]\to S^1_\beta\), constant equal to
\(\theta_C\) near \(0\) and constant equal to \(\theta_D\) near \(1\).  The map $\Lambda_\partial$
is also required to be constant in the normal coordinate to \(\partial B\) on
this collar.  This last condition is the no-twist condition for the annular
thickening.

A balanced lifted band is the graph of a smooth extension
\[
\Lambda:B\to S^1_\beta
\]
of \(\Lambda_\partial\),
\[
\widetilde B=\{(q,\Lambda(q))\mid q\in B\}\subset Y,
\]
whose interior is disjoint from the nonparticipating attaching components of the link.
\end{definition}

\begin{lemma}[graph-lift avoidance]
\label{lem:relative-graph-lift-avoidance}
Let \(B=[0,1]\times[-1,1]\subset\Sigma_g\) be an embedded surface band and
let \(L'\subset Y\) be a finite union of product lifts of embedded curves,
disjoint from the prescribed endpoint collars of \(B\).  Suppose balanced
collar data
\(
\Lambda_\partial:U\to S^1_\beta
\)
are given on a collar \(U\) of \(\partial B\), and suppose that the graph of
\(\Lambda_\partial\) is disjoint from \(L'\). Since the balanced collar data have zero boundary degree, choose a smooth
extension
\(
\Lambda_0:B\to S^1_\beta
\)
of \(\Lambda_\partial\). Since
\(B\) is a disk, such an extension \(\Lambda_0\) exists. Indeed, when one traverses \(\partial B\), the two end collars are
constant and the two long sides carry the same path \(\lambda\) with opposite
boundary orientations.  Hence the degree of
\(\Lambda_\partial|_{\partial B}:\partial B\to S^1_\beta\) is zero.  

Assume, after shrinking the band and perturbing it rel \(U\) if necessary,
that the following admissibility conditions hold:
\begin{itemize}

\item  For each component \(K\subset L'\), the projected curve \(\pi(K)\) meets
\(B_0=B\setminus\operatorname{int}U\) in a finite disjoint union of properly
embedded arcs; there are no closed loop components. Furthermore, the projected arcs arising from the components of \(L'\) are pairwise
disjoint in \(B_0\).  Equivalently, each component of
\(\pi(L')\cap B_0\) lies in the projection of a unique component of \(L'\).
\item For each such arc
\(A\), if \(f_A:A\to S^1_\beta\) is the level function of \(K\) over \(A\),
then the relative obstruction of \(\Lambda_0|_A\) to avoiding \(f_A\) is zero:
after choosing lifts \(\widehat\Lambda_0:A\to\mathbb R\) and
\(\widehat f_A:A\to\mathbb R\), the endpoint values of
\[
\widehat\Lambda_0-\widehat f_A
\]
lie in the same component \((n,n+1)\) of \(\mathbb R\setminus\mathbb Z\).
This condition is independent of the choices of lifts.
\end{itemize}

Then \(\Lambda_\partial\) extends to a smooth map
\[
\Lambda:B\to S^1_\beta
\]
such that the graph \(\widetilde B\) has interior disjoint from \(L'\), is embedded, and agrees with the
prescribed collar data.  Moreover, on the collar \(U\) of \(\partial B\), \(\Lambda\) is constant in the inward normal coordinate to each
boundary side.  Thus the collar neighborhoods of the boundary sides of
\(\widetilde B\) are the product lifts of the corresponding collar
neighborhoods in the surface band \(B\).

\end{lemma}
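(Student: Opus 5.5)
We would build \(\Lambda\) by modifying the fixed extension \(\Lambda_0\) only in small neighborhoods of the projected arcs. The graph of any map \(B\to S^1_\beta\) is automatically an embedded disk in \(Y=\Sigma_g\times S^1_\beta\), because it is a section over \(B\). So embeddedness is free. The only issue is avoiding \(L'\). A point of the graph meets a component \(K\subset L'\) exactly when, at some \(p\in\pi(K)\cap B\), we have \(\Lambda(p)=f_A(p)\). Over the collar \(U\) there are no such points, since the graph of \(\Lambda_\partial\) is disjoint from \(L'\) by hypothesis. It therefore suffices to arrange that, on each arc \(A\) of \(\pi(L')\cap B_0\), the function \(\Lambda|_A\) never equals \(f_A\), while keeping \(\Lambda=\Lambda_\partial\) on \(U\).

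First, handle each arc separately. Fix an arc \(A\) with endpoints on \(\partial B_0\subset U\). Lift \(\Lambda_0|_A\) and \(f_A\) to real-valued functions and set \(d=\widehat\Lambda_0-\widehat f_A\). At the endpoints, \(d\notin\mathbb Z\), because the endpoints lie in \(U\), where the graphs are disjoint. By the admissibility hypothesis both endpoint values lie in the same interval \((n,n+1)\). Choose a smooth function \(d':A\to(n,n+1)\) that agrees with \(d\) near \(\partial A\); for example, interpolate convexly inside \((n,n+1)\). Then put \(\Lambda|_A=(\widehat f_A+d')\bmod\mathbb Z\). This function misses \(f_A\) everywhere on \(A\), and it agrees with \(\Lambda_0\) near the endpoints.

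The hypothesis is independent of the chosen lifts: changing either lift shifts \(d\) by a constant integer, which moves both endpoint values into the same translated interval.

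Next, extend from the arcs to all of \(B\). The arcs are pairwise disjoint, properly embedded, and contain no closed loops. So we may take disjoint tubular neighborhoods \(A\times[-1,1]\) that meet \(U\) only near the ends of each arc. On such a neighborhood define \(\Lambda(p,t)\) by a cutoff interpolation. At \(t=0\) use the modified function above, and at \(|t|\) near \(1\) use \(\Lambda_0\). The interpolation is performed in lifted coordinates on the contractible strip, so it descends mod \(\mathbb Z\). Near the ends we already have \(d'=d\), so this interpolation equals \(\Lambda_0=\Lambda_\partial\) there and does not disturb the collar. Outside these neighborhoods, set \(\Lambda=\Lambda_0\).

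The interpolation on the strip is the main point to check. It must not create new intersections with \(L'\). This holds because the only parts of \(L'\) lying over the strip are over the core arc \(A\): the arcs are disjoint, and the closed components of \(\pi(L')\) meet \(B_0\) only in these arcs. So only the values on \(A\) itself matter. After a final smoothing of corners rel \(U\), \(\Lambda\) is smooth.

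Finally, the collar statement holds because on \(U\) we have \(\Lambda=\Lambda_\partial\). Balanced collar data is constant in the inward normal coordinate by definition. Hence the collar parts of \(\widetilde B\) are product lifts of the surface collars.

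The step needing the most care is the compatibility at the arc endpoints within \(U\). That is exactly where the hypotheses enter: the same-component condition, together with the assumption that the graph of \(\Lambda_\partial\) misses \(L'\). One should also check that all the interpolations are rel \(U\), so the prescribed collar data is preserved.
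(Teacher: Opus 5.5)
Your proposal is correct and takes essentially the same route as the paper's proof. On each projected crossing arc you replace the lifted difference \(\widehat\Lambda_0-\widehat f_A\) by a path confined to the single interval \((n,n+1)\), spread this change over disjoint thin strips with a cutoff in the normal coordinate (rel \(U\)), and observe that embeddedness is automatic for a graph and that the collar property follows from \(\Lambda=\Lambda_\partial\) on \(U\); requiring \(d'=d\) on a neighborhood of \(\partial A\), rather than only at the endpoints, is a tidy way to keep the strip interpolation compatible with the collar data but does not change the argument.
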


\begin{proof}
Let
\(
\Gamma=\pi(L')\cap B_0.
\)
By admissibility, \(\Gamma\) is a finite disjoint union of properly embedded
arcs \(A_1,\dots,A_m\).  Each \(A_j\) lies in the projection of a unique
component \(K_j\subset L'\), with level function
\(f_j:A_j\to S^1_\beta\).  The collar graph is already disjoint from \(L'\), so
at the endpoints of each \(A_j\) the prescribed endpoint values differ from
\(f_j\).

For a fixed arc \(A_j\), choose lifts \(\widehat\Lambda_0\) and
\(\widehat f_j\).  The zero relative obstruction hypothesis says that the
endpoint values of \(\widehat\Lambda_0-\widehat f_j\) lie in one component
\((n_j,n_j+1)\) of \(\mathbb R\setminus\mathbb Z\).  Choose a smooth path
\[
d_j:A_j\to(n_j,n_j+1)
\]
with these endpoint values and homotopic rel endpoints, as a path in
\(\mathbb R\), to \(\widehat\Lambda_0-\widehat f_j\).  Set
\[
\widehat\Lambda_j=\widehat f_j+d_j.
\]
Modulo \(\mathbb Z\), this defines a path \(\Lambda_j:A_j\to S^1_\beta\) which
agrees with \(\Lambda_0\), hence with \(\Lambda_\partial\), at the endpoints
and satisfies \(\Lambda_j(q)\ne f_j(q)\) for every \(q\in A_j\).  The chosen
homotopy of difference paths gives a homotopy from \(\Lambda_0|_{A_j}\) to
\(\Lambda_j\) rel endpoints.

Since the arcs \(A_j\) are pairwise disjoint, choose disjoint strip
neighborhoods \(N_j\cong A_j\times[-\epsilon,
\epsilon]\) in \(B_0\).  Use the above rel-endpoint homotopy on the core
\(A_j\times\{0\}\) and a cutoff function in the normal coordinate to define a
map on \(N_j\) which equals \(\Lambda_j\) on the core and equals \(\Lambda_0\)
near \(\partial N_j\), rel the part meeting \(U\).  Since the strips are
pairwise disjoint, these modifications may be made independently.  After
smoothing, we obtain a map \(\Lambda\) equal to \(\Lambda_\partial\) on \(U\)
and equal to \(\Lambda_j\) on each core arc \(A_j\).  Shrinking the strips if
necessary, the inequality \(\Lambda\ne f_j\) persists on the portion of
\(\pi(K_j)\cap B\) in the strip, by openness of the inequality.

There are no other possible intersections with \(L'\): a point of the graph of
\(\Lambda\) lies on \(L'\) exactly when its base point lies in \(\Gamma\) and
its \(\beta\)-coordinate equals the corresponding forbidden level.  We have
excluded this on every arc.  The graph is embedded because projection to the
embedded rectangle \(B\) is one-to-one.

Finally, \(\Lambda=\Lambda_\partial\) on the collar \(U\) of \(\partial B\). By the definition of balanced collar data, \(\Lambda_\partial\) is constant in the inward normal coordinate to each boundary side of \(B\). Hence, near each boundary side, the lifted collar of \(\widetilde B\) is the product lift of the corresponding collar of the surface band \(B\). In particular, the construction does not introduce any additional \(S^1_\beta\)-twist along the boundary collars. 
\end{proof}

\begin{remark}
\label{rem:graph-avoidance-obstruction}
The preceding admissibility hypothesis is essential.  Without it, the
relative graph-lift avoidance statement is false.  For a projected intersection
arc \(A\), the difference between a candidate section and the forbidden level
function \(f_A\) is a path in \(S^1\setminus\{1\}\) after division by \(f_A\).
The two endpoint differences must therefore lie in the same component of
\(\mathbb R\setminus\mathbb Z\) after lifting.  If they do not, every extension
with the prescribed boundary data intersects the forbidden graph.  For a closed
projected circle component, the obstruction is the relative winding
\(\deg(\Lambda|_A)-\deg(f_A)\).  In the chord-slide application below the band
is chosen thin enough that only proper arcs occur and these relative
obstructions vanish.
\end{remark}

\begin{lemma}
\label{lem:balanced-band-existence}
Let \(Y=\Sigma_g\times S^1_\beta\), and let \(L\subset Y\) be a finite framed
attaching link whose components are zero-winding product lifts of embedded
curves in \(\Sigma_g\), with product annular framings.  Assume that the
nonparticipating sublink
\[
L'=L\setminus(\widetilde C\cup\widetilde D)
\]
is projection-generic, meaning that its projected curves have no common arcs
and meet one another, if at all, only transversely in finitely many points.

Let \(\widetilde C,\widetilde D\subset L\) be two labelled components, and
write \(C=\pi(\widetilde C)\) and \(D=\pi(\widetilde D)\).  Suppose that a
chord slide replaces \(C\) by the surface band sum
\(C'=C\#_B D\), using an embedded rectangular band
\[
B=[0,1]\times[-1,1]\subset\Sigma_g
\]
whose attaching sides meet a small arc of \(C\) and a small arc of a product
pushoff \(D^+\).  After framed isotopies of the participating components,
supported over the attaching arcs and fixing every other component, the band
admits a balanced graph lift \(\widetilde B\subset Y\) such that:
\begin{enumerate}[label=(\roman*)]
\item it agrees near the attaching sides with the flattened endpoint levels
of \(\widetilde C\) and \(\widetilde D^+\);
\item its long-side collars carry balanced collar data;
\item its interior is disjoint from \(L'\).
\end{enumerate}
\end{lemma}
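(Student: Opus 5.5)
The plan is to assemble the balanced lift from three earlier ingredients: endpoint flattening (Lemma~\ref{lem:endpoint-flattening}), zero-winding straightening (Lemma~\ref{lem:zero-winding-to-constant}), and graph-lift avoidance (Lemma~\ref{lem:relative-graph-lift-avoidance}). The order is: flatten the participating components over the attaching arcs, prescribe balanced collar data, verify the admissibility hypotheses after thinning the band, and extend.

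First, let \(I_C\subset C\) and \(I_D\subset D^+\) be the arcs where \(e_C\) and \(e_D\) attach. Shrink them so that the product boxes over slightly larger arcs miss \(L'\). Then apply Lemma~\ref{lem:endpoint-flattening} to \(\widetilde C\) and to \(\widetilde D\); the pushoff \(\widetilde D^+\) is flattened simultaneously because the framing extends the level constantly in the normal direction. This makes both endpoint-flat with constant levels \(\theta_C,\theta_D\). These isotopies are framed, preserve zero winding, and fix every other component, as required.

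Second, choose a path \(\lambda:[0,1]\to S^1_\beta\) from \(\theta_C\) to \(\theta_D\), constant near the ends. Define \(\Lambda_\partial\) on a thin collar \(U\) of \(\partial B\): it equals \(\theta_C\) near \(e_C\), \(\theta_D\) near \(e_D\), and \(\lambda(s)\) on both long sides, constant in the normal coordinate. This gives (i) and (ii) by construction. The graph of \(\Lambda_\partial\) must also avoid \(L'\). Near the ends this holds because the boxes were chosen disjoint from \(L'\). On the long sides, \(L'\) projects to finitely many transverse points on \(s_\pm\) by projection-genericity, after a small perturbation of \(B\) rel ends. At each such point I choose \(\lambda\) to avoid the single forbidden level. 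This is possible because the forbidden set is finite, and it uses the freedom in \(\lambda\) (or shifts the crossing point slightly along the side). The only catch is that \(\lambda\) is shared by \(s_+\) and \(s_-\), so the forbidden pairs \((s,f(s))\) from both sides must be avoided simultaneously. This is still a finite set of points in \([0,1]\times S^1\), so a generic path avoids them.

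Third, for (iii) I invoke Lemma~\ref{lem:relative-graph-lift-avoidance}; the extension \(\Lambda_0\) exists since the boundary degree is zero. The admissibility conditions are the main obstacle. Make \(B\) thin and transverse to \(\pi(L')\) with no tangencies, and isotope it in \(\Sigma_g\) rel the attaching ends so that it misses the finitely many double points of \(\pi(L')\). Then \(\pi(L')\cap B_0\) consists of pairwise disjoint short arcs crossing the band from \(s_+\) to \(s_-\), each in a unique component, with no closed loops. On such a crossing arc \(A\), \(\Lambda_0\) can be taken equal to \(\lambda(s)\) (constant in the width direction), and \(f_A\) is nearly constant because \(A\) is short. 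Both endpoint values of \(\widehat\Lambda_0-\widehat f_A\) are then close to \(\lambda(s_0)-f(s_0)\), which is nonintegral by the second step, so they lie in one component of \(\mathbb R\setminus\mathbb Z\).

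The step I expect to be delicate is making this quantitative. The band width must be small relative to the distance of \(\lambda(s_0)-f(s_0)\) from \(\mathbb Z\), and the arcs \(A\) may cross obliquely rather than straight across. I will handle this by first fixing \(\lambda\), which fixes positive margins at the finitely many crossings, and then shrinking the band width. Lemma~\ref{lem:relative-graph-lift-avoidance} then gives the embedded lift \(\widetilde B\) with interior disjoint from \(L'\) and product collars.
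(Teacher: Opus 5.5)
Your proposal is correct and follows essentially the same route as the paper. You flatten the participating components by Lemma~\ref{lem:endpoint-flattening} in boxes missing \(L'\), choose one path \(\lambda\) serving both long sides and avoiding the finitely many forbidden levels, set \(\Lambda_0(s,t)=\lambda(s)\), thin the band until the variations along each crossing arc are small relative to the margin, and then apply Lemma~\ref{lem:relative-graph-lift-avoidance}. Two small remarks: Lemma~\ref{lem:zero-winding-to-constant} is never actually needed, and the margin is most cleanly fixed at the points where the core of the band meets \(\pi(L')\), so that it visibly survives the later thinning that moves the long sides.
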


\begin{proof}
First perturb the core arc of the chord-slide band rel endpoints so that it is
transverse to every projected component of \(L'\) and avoids all mutual
intersection points of those projected curves.  Choose the attaching arcs and
their product boxes disjoint from \(L'\), and apply
Lemma~\ref{lem:endpoint-flattening} to \(\widetilde C\) and
\(\widetilde D\).  Let the resulting endpoint levels be \(\theta_C\) and
\(\theta_D\).

Now take a sufficiently thin rectangular neighborhood \(B\) of the chosen
core, rel the endpoint boxes.  Every component of
\(\pi(L')\cap B_0\), where \(B_0\) is the complement of the prescribed
boundary collar, is then a properly embedded crossing arc; there are no closed
components, the crossing arcs are pairwise disjoint, and each belongs to the
projection of a unique component of \(L'\).

Choose a smooth path
\(
\lambda:[0,1]\to S^1_\beta
\)
which is constant at \(\theta_C\) near \(0\), constant at \(\theta_D\) near
\(1\), and whose graph on the two long sides avoids the finitely many
forbidden levels of \(L'\).  These inequalities are strict, so compactness and
finiteness give a positive lower bound for their circular distances from the
forbidden values.  Define the preliminary extension by
\[
\Lambda_0(s,t)=\lambda(s).
\]
Thin the rectangle further so that, along every crossing arc \(A\), the
variation of both \(\Lambda_0\) and the forbidden level function \(f_A\) is
smaller than one third of this lower bound.  After choosing real lifts, the two
endpoint values of
\(
\widehat\Lambda_0-\widehat f_A
\)
then lie in the same component of \(\mathbb R\setminus\mathbb Z\).  Thus all
relative obstructions in Lemma~\ref{lem:relative-graph-lift-avoidance} vanish.

Apply that lemma.  Its modifications are performed simultaneously in disjoint
strip neighborhoods of the crossing arcs and are fixed on the prescribed
boundary collar.  The resulting graph is the required balanced lifted band.
\end{proof}

\begin{lemma}
\label{lem:product-framed-slides}
Let
\(
\widetilde C,\widetilde D\subset Y
\)
be zero-winding product lifts of oriented embedded curves
\(C,D\subset\Sigma_g\), with product annular framings.  Let
\(B\subset\Sigma_g\) be a surface band from \(C\) to a product-framed
pushoff \(D^+\).  After endpoint flattening, let \(\widetilde B\) be a
balanced lifted band.  Then the framed handle slide of \(\widetilde C\) over
\(\widetilde D\) along \(\widetilde B\) produces a zero-winding product
lift of \(C'=C\#_B D\) with product annular framing.
\end{lemma}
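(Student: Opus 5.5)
We will describe the framed handle slide explicitly as a band sum in \(Y\) and check the three properties (graph form, zero winding, product framing) directly.

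\emph{Step 1: the result is a graph.} After endpoint flattening (Lemma~\ref{lem:endpoint-flattening}), \(\widetilde C\) is constant at \(\theta_C\) over an arc \(I_C\), and the pushoff \(\widetilde D^+\), taken as the graph over \(D^+\) of the normal-constant extension of \(\ell_D\), is constant at \(\theta_D\) over an arc \(I_D\). The framed slide removes the flat arcs of \(\widetilde C\) and \(\widetilde D^+\) where the band attaches and adds the two long sides of \(\widetilde B\). Over \(s_\pm\) these sides are the graphs of \(\lambda\), by the balanced collar data. So the new curve \(\widetilde C'\) is
\[
\widetilde C'=\{(p,\ell_{C'}(p))\mid p\in C'\},
\]
where \(\ell_{C'}\) equals \(\ell_C\) on \(C\setminus I_C\), equals \(\ell_D\) (transported to \(D^+\)) on \(D^+\setminus I_D\), and equals \(\lambda\) on each long side. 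The function \(\ell_{C'}\) is smooth, because \(\lambda\) is constant near the ends and matches the flat levels. It is an embedded product lift of \(C'=C\#_B D\), since projection to the embedded curve \(C'\) is injective.

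\emph{Step 2: zero \(\beta\)-winding.} The degree of \(\ell_{C'}\) along \(C'\) is the sum of the contributions from the four pieces. The \(C\) part contributes \(\deg_\beta(\widetilde C)=0\) and the \(D^+\) part contributes \(\deg_\beta(\widetilde D)=0\); removing the flat arcs changes nothing. The two long sides are traversed in opposite directions along the same path \(\lambda\), so their contributions in \(\mathbb R\)-lifts cancel. This is the same computation that showed \(\Lambda_\partial|_{\partial B}\) has degree zero in Lemma~\ref{lem:relative-graph-lift-avoidance}. Hence \(\deg_\beta(\widetilde C')=0\).

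\emph{Step 3: product framing.} This is the step needing most care. The framing of a handle slide is the band sum of the framing of \(\widetilde C\) with the pushoff framing of \(\widetilde D\), joined across the band by the band's own normal thickening. The product annular framing of \(\widetilde C\) is the graph of \(\ell_C\), extended constantly in the surface-normal direction, over a pushoff; the same holds for \(\widetilde D^+\). Along \(\widetilde B\), the no-twist condition says that \(\Lambda_\partial\) is constant in the normal coordinate on the collar of each long side. The band framing is therefore the graph, over the surface-normal pushoff of \(s_\pm\), of the same level \(\lambda\). These pieces glue to the graph of the normal-constant extension of \(\ell_{C'}\) over a surface pushoff \(C'^+\subset N_\Sigma(C')\), which is by definition the product annular framing of \(\widetilde C'\).

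Two points must be verified. The first is that the band's interior, which may be tilted by \(\Lambda\), does not change the framing. It does not, because the framing only sees the collar of \(\partial\widetilde B\), where \(\Lambda=\Lambda_\partial\) is normal-constant. The second is that the sign conventions make the pushoff of \(D^+\) agree with the orientation of \(C'\). This holds because \(B\) attaches to the product pushoff \(D^+\) rather than to \(D\), which is exactly the standard description of a framed slide in which the framings add with no correction term. The cross term \(2\,\mathrm{lk}(\widetilde C,\widetilde D)\) from the usual framing formula does not appear here, since we compare framings pointwise as pushoffs rather than as integers.
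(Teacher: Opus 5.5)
Your proposal is correct and follows essentially the same route as the paper. The \(\beta\)-degree is computed piece by piece: the flat attaching arcs and the remaining \(C\)- and \(D^+\)-portions contribute zero, and the two long sides carry \(\lambda\) in opposite directions and cancel. The framing is obtained by gluing the product annuli of \(\widetilde C\) and \(\widetilde D^+\) to the normal-constant long-side collars of \(\widetilde B\). Your Step 1, which checks that the slid curve is a smooth graph over \(C'\) because the levels are locally constant near the corners, makes explicit what the paper leaves implicit.
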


\begin{proof}
Let \(I_C\subset C\) and \(I_D^+\subset D^+\) be the two endpoint arcs
removed in the surface band sum.  By endpoint flattening, their level
functions are constant, so these arcs contribute zero to the
\(\beta\)-degree.  Put
\[
C_0=\overline{C\setminus I_C},
\qquad
D_0^+=\overline{D^+\setminus I_D^+}.
\]
The \(C_0\)- and \(D_0^+\)-portions contribute zero because the original
lifts have degree zero; if the band sum uses the \(D\)-portion with the
opposite orientation, the sign of this zero contribution is reversed.  The
two long sides of the band carry the same path \(\lambda\), but occur with
opposite orientations in the boundary of the band sum.  Their contributions
cancel.  Hence
\(
\deg_\beta(\widetilde C')=0.
\)

The framing is the product annular framing.  The annulus representing the
framed handle slide is obtained by gluing the product annular framing of
\(\widetilde C\), the product annular framing of the pushoff
\(\widetilde D^+\), and the lifted annular thickening of \(B\).  The pieces
agree on the endpoint collars.  On the long-side collars the level function is
\(\lambda(s)\) and is constant in the normal coordinate to the side.  Thus
the glued annulus is exactly the graph over the surface annulus defining the
product annular framing of \(C'\), with no additional twist.
\end{proof}

\subsection{Chord slides as fixed-boundary handle slides}

\begin{lemma}
\label{lem:standard-product-framed}
Let
\[
M=S^4\setminus\nu\Sigma_g^0,
\qquad
\partial M=\Sigma_g\times S^1_\beta,
\]
where \(\Sigma_g^0\subset S^3\subset S^4\) is the standard Heegaard surface
and
\[
S^3=H_-\cup_{\Sigma_g}H_+.
\]
Choose the standard complete compressing disk systems for \(H_-\) and
\(H_+\), with oriented boundary curves
\[
u_1,\dots,u_g,
\qquad
v_1,\dots,v_g,
\]
so that, for each \(i\), the curves \(u_i\) and \(v_i\) meet
transversely in exactly one point with positive local intersection sign, while
\(u_i\) and \(v_j\) are disjoint for \(i\ne j\), and so that each pair
\(u_i,v_i\) lies in one of a collection of disjoint one-holed tori in
\(\Sigma_g\).  In particular,
\[
u_i\cdot v_j=\delta_{ij}.
\]  The fixed marked exterior \(M\) has a relative handle
decomposition from \(\partial M\) with product-framed relative \(2\)-handles
attached along
\[
u_i\times\{0\},
\qquad
v_i\times\{1/2\},
\]
followed by one relative \(3\)-handle and one relative \(4\)-handle.
\end{lemma}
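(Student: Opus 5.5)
The plan is to build $M$ explicitly from the standard picture $\Sigma_g^0\subset S^3\subset S^4$, with $S^4=B^4_-\cup_{S^3}B^4_+$. The unknotted surface bounds the handlebody $H_+$ pushed into $B^4_+$ and the handlebody $H_-$ pushed into $B^4_-$. Equivalently, $S^4$ is the spin of $S^3$ along $\Sigma_g^0$, in the sense that the normal circle to $S^3$ rotates. Concretely, I would use the decomposition
\[
S^4\setminus\nu\Sigma_g^0 \;\cong\; \bigl(\Sigma_g\times S^1_\beta\times[0,1]\bigr)\cup\bigl(H_-\times I_-\bigr)\cup\bigl(H_+\times I_+\bigr)\cup(\text{a } 4\text{-cell}).
\]
Here $S^1_\beta$ is parametrized so that the level $\beta=0$ corresponds to the half $B^4_-$ and the level $\beta=1/2$ to $B^4_+$. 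The point is that $M$ is the complement of the spun surface: sweeping $S^3\setminus\nu\Sigma$ around the meridian circle, the compressing disks of $H_-$ sit at level $0$ and those of $H_+$ at level $1/2$.

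First step: attach the thickened meridian disks $D_{u_i}\times[-\epsilon,\epsilon]$ of $H_-$ at level $0$. Each is a relative $2$-handle attached to $\Sigma_g\times S^1_\beta\times\{1\}$ along $u_i\times\{0\}$. Similarly, the disks $D_{v_i}$ of $H_+$ give $2$-handles at level $1/2$ along $v_i\times\{1/2\}$. Cutting $H_\pm$ along their disk systems leaves a $3$-ball in each. After adding the $2$-handles, the remainder of $M$ is a product of a $3$-ball and an interval, swept over the remaining arc in $\beta$, glued to the two balls. I would identify this remainder as one $3$-handle together with one $4$-handle. As a consistency check, $\chi(M)=\chi(S^4)-\chi(\Sigma_g)=2g$, which matches $-2g\cdot 0+2g-1+1$ computed relative to $\partial M$ (with $\chi(\partial M)=0$). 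For fundamental group, the $2$-handles kill the surface group and leave $\mathbb Z\langle\beta\rangle$, consistent with Lemma~\ref{lem:drilled-exterior}. The boundary marking is never reparametrized, because every handle is attached inside the collar over the fixed $\Sigma_g\times S^1_\beta$.

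Second step, the framings. The handle $D_{u_i}\times[-\epsilon,\epsilon]$ is attached with normal direction inside the $3$-dimensional slice $S^3\times\{\beta=0\}$. Its thickening therefore meets the boundary $\Sigma_g\times S^1_\beta$ in the annulus $N_\Sigma(u_i)\times\{0\}$, because the normal to $\partial D_{u_i}$ within $D_{u_i}$'s thickening in $S^3$ is tangent to $\Sigma_g$. The attaching framing is thus the surface pushoff $u_i^+\times\{0\}$, which is the product framing of Convention~\ref{conv:product-framed-surgery}. The same holds for $v_i$ at level $1/2$.

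Main obstacle: verifying that the leftover really consists of exactly one $3$-handle and one $4$-handle, with the $3$-handle attached cleanly. I would handle this by checking that after the $2$-handle surgeries the top boundary is $S^2\times S^1\#\cdots$. By Lemma~\ref{lem:Q-surgery-solid-torus} applied in each one-holed torus, product-framed surgery on $\Sigma_g\times S^1_\beta$ along the pairs $u_i\times\{0\}$, $v_i\times\{1/2\}$ caps each one-holed torus. This reduces $\Sigma_g\times S^1_\beta$ to $S^2\times S^1_\beta$. That boundary is then filled by $D^3\times S^1$, which is precisely one $3$-handle, attached along $S^2\times\{pt\}$, followed by a $4$-handle. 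Finally, uniqueness of $D^3\times S^1$ fillings (via Laudenbach--Po\'enaru) guarantees that the result is diffeomorphic to $M$ relative to $\partial M$.
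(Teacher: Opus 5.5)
Your overall route is the paper's. The truncated compressing disks of $H_-$ and $H_+$ leave $\Sigma_g^0$ in antipodal normal directions and become core disks with boundary $u_i\times\{0\}$ and $v_i\times\{1/2\}$. The attaching longitude is the surface pushoff, because the thickened disk meets $\partial M$ in $N_\Sigma(u_i)\times\{0\}$. Lemma~\ref{lem:Q-surgery-solid-torus}, applied in each one-holed torus, then shows that the outgoing boundary of the $2$-handle stage is $S^2\times S^1_\beta$.

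The step that does not work as written is how you close what you call the main obstacle. Knowing that this outgoing boundary is $S^2\times S^1$ and citing Laudenbach--Po\'enaru does not show that the rest of $M$ is $S^1\times B^3$. Laudenbach--Po\'enaru only says that gluing $S^1\times B^3$ (a $3$-handle and a $4$-handle) onto $S^2\times S^1$ gives a result independent of the gluing map. It says nothing about an unknown filling, and $S^2\times D^2$ also fills $S^2\times S^1$.

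What is needed is a direct identification of the remaining piece. This is exactly how the paper argues, and your first step already sketches it. Compressing $\Sigma_g^0$ along both disk systems turns it into the unknotted $2$-sphere bounding the two $3$-balls left over from $H_\pm$. The part of $M$ outside the collar and the $2g$ handles is the exterior of that sphere, i.e.\ $S^1\times B^3$. Its $0$-handle/$1$-handle decomposition dualizes, relative to $S^2\times S^1$, to one $3$-handle and one $4$-handle. Once this is stated, the Laudenbach--Po\'enaru sentence is superfluous.

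Two smaller slips. First, your displayed decomposition of $M$ needs two $4$-cells rather than one; with one, your own Euler characteristic count gives $2g-1$ instead of $2g$. Second, the $4$-dimensional $2$-handle is $D_{u_i}\times D^2$, not the $3$-dimensional $D_{u_i}\times[-\epsilon,\epsilon]$.
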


\begin{proof}
Choose the normal-circle coordinate so that the ray \(\beta=0\) points from
\(\Sigma_g\) into \(H_-\), while the opposite ray \(\beta=1/2\) points into
\(H_+\).  Truncate the chosen compressing disks near their boundary on the
surface-normal disk bundle.  The disks on the \(H_-\)-side become disjoint
properly embedded core disks in \(M\) with boundary
\(u_i\times\{0\}\), and the disks on the \(H_+\)-side give core disks with
boundary \(v_i\times\{1/2\}\).  Regular neighborhoods of these disks are the
relative \(2\)-handles.

Near a boundary curve \(u_i\), the attaching longitude determined by its
compressing disk is obtained by pushing \(u_i\) in the surface-normal
direction in \(\Sigma_g\) while keeping the \(\beta\)-coordinate equal to
\(0\).  It is therefore the surface pushoff
\(u_i^+\times\{0\}\), which is the product framing.  The same argument on
the other side gives the product longitude
\(v_i^+\times\{1/2\}\).

Let the disjoint one-holed tori containing the pairs \(u_i,v_i\) be chosen so
that their complement in \(\Sigma_g\) is planar.  By
Lemma~\ref{lem:Q-surgery-solid-torus}, product-framed surgery on each pair
\(u_i\times\{0\},v_i\times\{1/2\}\) replaces the corresponding
one-holed-torus product by \(D^2\times S^1_\beta\).  Capping every boundary
component of the planar complement shows that the outgoing boundary after all
relative \(2\)-handles is \(S^2\times S^1\).  Compressing the two standard
handlebodies along the chosen disk systems reduces the equatorial surface to
the standard unknotted \(S^2\); the remaining part of the exterior is
\(S^1\times B^3\).  Its ordinary handle decomposition has one \(0\)-handle
and one \(1\)-handle, so the dual decomposition relative to
\(S^2\times S^1\) has one relative \(3\)-handle and one relative
\(4\)-handle.
\end{proof}

Let
\(
S=\Sigma_g\setminus\operatorname{int}D
\)
be a once-bordered surface.  Fix distinct points
\(
p,q\in\partial S
\)
in a chosen short boundary interval
\(
I\subset\partial S.
\)
The point \(p\) is the basepoint used to read the markings, while the
univalent end of the tail is fixed at \(q\).

We use Bene's definitions as follows \cite[Sections~2--5]{Bene2010}.  A
bordered fatgraph is a fatgraph with one boundary cycle, all of whose vertices
are at least trivalent except for one univalent vertex; the edge incident to
that vertex is the tail.  A marking is an isotopy class of embeddings of the
bordered fatgraph in \(S\) as a spine with contractible complement, with the
univalent end of the tail at \(q\).  Equivalently, the marking is the
geometric \(\pi_1(S,p)\)-marking on the oriented edges.  The boundary cycle,
read from the tail, linearly orders the oriented edges.  Bene's greedy
algorithm uses this order to construct a canonical maximal tree \(T_G\): an
edge is added while the subgraph already chosen remains a tree.  The
complement
\[
X_G=E(G)\setminus E(T_G)
\]
is the generator set of the marked bordered fatgraph.

A linear chord diagram consists of an oriented interval, called the linear
core, together with chords whose endpoints are attached to the core.  Its
initial core segment is the tail.  It is a bordered chord diagram when the
associated fatgraph has one boundary cycle, and it is marked by a marking of
that bordered fatgraph.  For a bordered chord diagram the linear core is the
greedy maximal tree and its chords are precisely the elements of the generator
set.  The objects of Bene's marked chord groupoid are the marked bordered
chord diagrams.  A chord slide moves one endpoint of a chord along an adjacent
chord to the opposite endpoint, leaving all other chord endpoints fixed.
Bene's branch-reduction construction is defined using the canonical greedy
tree \(T_G\) and preserves the complementary generator set \(X_G\).
Accordingly, below we construct the two endpoint objects directly as marked
bordered chord diagrams, rather than applying branch reduction to an
independently chosen maximal tree.

\begin{theorem}[Bene]\label{thm:bene-chord}
Every morphism in the marked chord groupoid of \(\Sigma_{g,1}\) is a finite
composition of chord slides.  In particular, any two marked bordered chord
diagrams are connected by a finite sequence of chord slides.
\end{theorem}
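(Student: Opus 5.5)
Bene's theorem is a statement about the marked chord groupoid of \(\Sigma_{g,1}\), and the paper quotes it from \cite{Bene2010}. The plan is to reconstruct its proof from three ingredients: the combinatorial presentation of the mapping class groupoid via fatgraphs (Penner's Ptolemy groupoid, i.e. Whitehead moves on marked trivalent bordered fatgraphs), the greedy maximal tree \(T_G\), and the branch-reduction map from marked bordered fatgraphs to marked bordered chord diagrams. The overall strategy has three steps. First, show that the space of marked bordered fatgraphs is connected by Whitehead moves. Second, show that branch reduction sends each Whitehead move to a finite composition of chord slides. Third, conclude that any two marked chord diagrams, viewed as marked bordered fatgraphs, are joined by a Whitehead path, so that their images are joined by chord slides. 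The second assertion of the theorem is then immediate, and the first follows because a morphism of the groupoid is just an ordered pair of objects.

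For the first step I would use the decorated Teichmüller space of the bordered surface, with one distinguished boundary point at the tail. Penner's convex-hull construction gives an \(\Mod(\Sigma_{g,1})\)-equivariant ideal cell decomposition whose top cells correspond to marked trivalent bordered fatgraphs and whose codimension-one faces correspond to single Whitehead moves. The space is contractible, so its dual complex is connected. In fact its dual \(2\)-skeleton is simply connected, which gives the commutative, pentagon and involutivity relations, although only connectivity is needed here. This step is standard.

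For the second step, fix a marked bordered trivalent fatgraph \(G\) and a Whitehead move \(G\to G'\) on an edge \(e\). I would compare the greedy trees \(T_G\) and \(T_{G'}\) and the generator sets \(X_G\) and \(X_{G'}\) by cases, according to whether \(e\) lies in \(T_G\) and where the four half-edges around \(e\) occur in the boundary cycle read from the tail.

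1. If \(e\notin T_G\), the move collapses and re-expands a chord. After branch reduction this is a single chord slide of one chord endpoint along an adjacent chord.
2. If \(e\in T_G\), the greedy order may change. Then \(T_{G'}\) differs from the image of \(T_G\), and one must check that the induced change of generators is a Nielsen-type transformation realized by a short sequence of chord slides, using that branch reduction collapses tree edges onto the linear core and preserves \(X_G\).
3. The tail edge and the edges adjacent to it need separate treatment, since there the linear order of the core starts and slides are constrained.

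I expect the main obstacle to be the second case of the second step. A Whitehead move on a tree edge can reorder the greedy algorithm globally, because an edge added early in the boundary-cycle order may now be skipped, and in principle every later choice can change. What I would try first is to show that the discrepancy is local. The boundary cycle of \(G'\) agrees with that of \(G\) except on the four sectors at \(e\), so the greedy choices agree up to the first affected sector. After that point the remaining tree edges should be obtainable from the old ones by sliding along the new edge. This would be an induction on the position of \(e\) in the boundary order, with each inductive step contributing one or two chord slides. If this local argument fails, the fallback is to bypass tree edges altogether: first show that every Whitehead path can be homotoped, using the pentagon relations, to one passing only through moves on non-tree edges, and then use only the first case.
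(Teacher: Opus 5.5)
\paragraph{A genuine gap in step two.} The paper does not reprove this statement; it quotes \cite[Theorem~5.3]{Bene2010} as a black box. Your outline uses the right framework: connectivity of the Ptolemy groupoid of bordered fatgraphs, the greedy tree, and branch reduction. Your first and third steps are fine. However, the whole content of the theorem lies in your second step, and you do not establish it. Case~1 is only asserted, and case~2 is left as something you would try. The fallback, rerouting Whitehead paths through non-tree moves via pentagon relations, has no supporting argument, since which edges are tree edges changes along the path.

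\paragraph{Case~1 is false as stated.} The obstacle is also not the one you identify. The boundary cycle of $G'=W_e(G)$ is that of $G$ with the two traversals of $e$ deleted and two traversals of $e'$ inserted at other corners. Since $T_G$ is the first-arrival tree of this walk, every vertex other than the two endpoints of $e$ keeps its greedy parent edge, so there is no global reordering. Checking the order in which the walk first enters the four corners at $e$ gives the following:
\begin{itemize}
\item if $e\notin T_G$, then always $e'\in T_{G'}$;
\item in that case the greedy parent edge of one endpoint of $e$ leaves the tree and becomes a generator.
\end{itemize}
The flip fixes all other dual arcs. By the vertex relations, the value of this new generator is therefore a product of the values of all generators joining the subtree of $T_G$ below that edge to its complement, one of which is $e$.

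\paragraph{A counterexample.} Take the genus-two chord diagram with chords $\{1,5\},\{2,6\},\{3,7\},\{4,8\}$, whose greedy tree is its core. The Whitehead move on the chord $\{1,5\}$ puts the new edge in the tree and turns the core edge between feet $4$ and $5$ into a generator. The value of that generator is a product of all four chord values. Branch reduction preserves generator sets, and a chord slide replaces one generator $C$ by a product of $C^{\pm1}$ with a single other generator. Hence the branch reduction of $G'$ is not one chord slide away from $G$.

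\paragraph{What is missing.} You need a lemma comparing the branch reductions of $G$ and $W_e(G)$ in exactly this exchange situation. It must show that replacing one generator by such a branch product is realized by a finite sequence of legitimate slides, each along an adjacent chord in the branch-reduced diagram. That combinatorial statement is the heart of the argument, and nothing in the proposal supplies it.
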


\begin{proof}[Reference]
This is \cite[Theorem~5.3]{Bene2010}.  Bene's theorem supplies only the finite
sequence of marked chord slides.  The embedded surface curves, the lift to the
fixed boundary \(Y\), the product framings, and the endpoint control are proved
below.
\end{proof}

\begin{lemma}
\label{lem:chord-generator-curves}
Let \(\mathcal D\) be a marked bordered chord diagram embedded in \(S\).
For every oriented chord \(C\) of \(\mathcal D\), its geometric marking may be
represented by a based loop of the form
\[
\rho_C\,C^\circ\,\rho_C^{-1},
\]
where \(C^\circ\subset\operatorname{int}S\) is an oriented embedded closed
curve and \(\rho_C\) is a path from \(p\) to \(C^\circ\).  The curves
\(C^\circ\), one for each labelled chord, may be chosen simultaneously so
that they have no common arcs and meet, if at all, only transversely in
finitely many points.  We call the resulting labelled family the complete
closed generator system of \(\mathcal D\).

If two representatives of the same marked chord-diagram object are used, their
complete closed generator systems may be chosen to be simultaneously ambiently
isotopic in \(S\), relative to a collar of \(\partial S\), with all labels and
orientations preserved.
\end{lemma}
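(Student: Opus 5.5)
The plan is to build the closed curves chord by chord from the fatgraph embedding, using only the fact that \(\mathcal D\) is an embedded spine of \(S\) with contractible complement. First I fix the embedding of the linear core together with the chords as an embedded graph \(G\subset S\), with the tail ending at \(q\). The core is a maximal tree \(T_G\), so it is contractible. For each chord \(C\), the geometric marking is the based loop that runs along the core from \(p\) (through the tail) to the initial endpoint of \(C\), traverses \(C\), and returns along the core. The core portion of this loop is an embedded arc in the tree, and the loop is the concatenation of that arc with the chord. I therefore take \(C^\circ\) to be the closed curve formed by \(C\) together with the unique core segment \(\sigma_C\) joining its two endpoints, and \(\rho_C\) to be the core path from \(p\) to the initial endpoint of \(C\). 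Then \(\rho_C C^\circ\rho_C^{-1}\) is homotopic rel \(p\) to the marking loop, since the back-and-forth portion along the tree cancels.

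The curve \(C\cup\sigma_C\) is embedded, because a chord is an embedded arc meeting the core only at its two endpoints and \(\sigma_C\) is an embedded arc of the core. I then push it slightly off \(G\) into \(\operatorname{int}S\), using a small regular neighborhood \(N(G)\), so that it lies in the interior and avoids the tail and \(\partial S\). To make the whole family generic, I push each \(C^\circ\) off the core to a distinct small normal offset: the chords get distinct parallel copies, and the segments \(\sigma_C\) get distinct offsets in the ribbon neighborhood of the core. Two such curves can share a common arc only where their core segments overlap. Using distinct offset heights in the ribbon removes every common arc, and where the curves must switch sides at chord endpoints (according to the fat structure at the vertices), they cross transversely in finitely many points. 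A final small general-position perturbation, supported away from the base paths, secures transversality. The base paths \(\rho_C\) are left unchanged, connected by short normal arcs.

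For the uniqueness clause, two representatives of the same marked object are embeddings of the same bordered fatgraph that are isotopic as markings, meaning isotopic with the tail end fixed at \(q\). I first extend this isotopy to an ambient isotopy of \(S\) rel a collar of \(\partial S\) using isotopy extension, after normalizing the tail near \(q\) so that the isotopy is stationary there. The construction above depends only on the combinatorial data: the chords, the core segments, and the offset choices, all made inside the regular neighborhood. The ambient isotopy therefore carries one system to the other, with labels and orientations preserved. Any differing choices of offsets within \(N(G)\) are connected by an isotopy supported in \(N(G)\), since the space of such offset choices is connected once the ordering of heights is fixed combinatorially. If two different orderings of heights are used, the resulting families differ by sliding one strand across another inside the ribbon, which is again an isotopy of each curve. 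This gives simultaneous ambient isotopy only if no crossing change is needed, so I would fix a canonical ordering, for example the order of chords along the boundary cycle read from the tail, to avoid that issue.

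The main obstacle is this last point: making the "simultaneous" isotopy genuine rather than componentwise. Fixing a canonical offset convention determined by the boundary-cycle order, together with the isotopy extension rel the collar, should handle it.
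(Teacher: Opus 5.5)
Your first step misidentifies the marking, so the curves you build represent the wrong loops.

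You take the geometric marking of a chord \(C\) to be its fundamental cycle relative to the core tree: run along the core to an endpoint of \(C\), traverse \(C\), return along the core. Accordingly you set \(C^\circ=C\cup\sigma_C\). The marking in this paper, following Bene, is instead the one obtained from the arc family dual to the embedded spine. The geometric marking of an oriented edge \(e\) is the loop based at \(p\) that runs through the disk complementary to the spine and tail, crosses the spine exactly once (transversally at \(e\)), and returns to \(p\).

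These are different elements of \(\pi_1(S,p)\). Take the crossing two-chord diagram in \(\Sigma_{1,1}\) with chords \(a,b\). The dual loop \(\gamma_a\) meets the spine only in one point of \(a\), so it meets \(a\cup\sigma_a\) once and is disjoint from \(b\cup\sigma_b\). Since \([a\cup\sigma_a],[b\cup\sigma_b]\) is a basis of \(H_1(\Sigma_{1,1};\mathbb Z)\) with intersection number \(\pm1\), this forces \([\gamma_a]=\pm[b\cup\sigma_b]\ne[a\cup\sigma_a]\).

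So your family \(\{C^\circ\}\) does not prove the first claim as stated. It would also not serve later in the paper. The chord-slide realization lemma relies on the dual-arc description: a Whitehead move flips the arc dual to the collapsed edge, and Bene's six words describe how this dual-loop marking changes.

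The repair, which is what the paper does, is to build \(C^\circ\) from the dual arcs themselves:
\begin{enumerate}[label=(\arabic*)]
\item The based dual arcs \(\gamma_C\) are embedded and pairwise disjoint away from \(p\).
\item Inside a small half-disk \(U\) about \(p\), replace their endpoint subarcs by nearby radial subarcs and join these by an arc in \(U\setminus\partial S\).
\item This gives embedded closed curves in \(\operatorname{int}S\) whose mutual intersections all lie in \(U\). General position of the joining arcs there makes the intersections finitely many and transverse, and \(\rho_C\) is a short path in \(U\).
\end{enumerate}
No offset bookkeeping in the ribbon is needed.

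Your argument for the second clause is essentially the paper's: normalize the tail, extend the marking isotopy to an ambient isotopy of \(S\) fixed near \(\partial S\), and push all auxiliary choices forward. Since the statement only asks that the two systems \emph{may be chosen} ambiently isotopic, you can define the second system as the image of the first. Then the worry about incompatible offset orderings never arises.
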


\begin{proof}
For a marked bordered fatgraph, the geometric marking is obtained from the
arc family dual to the embedded spine, with every dual arc based at \(p\)
\cite[Section~2.1]{Bene2010}.  Let \(\gamma_C\) be the embedded based dual arc
corresponding to an oriented chord \(C\).  Choose a small half-disk
\(U\subset S\) about \(p\), contained in the fixed boundary interval collar
and meeting every \(\gamma_C\) in its two endpoint subarcs.  Replace those two
subarcs by two nearby radial subarcs and join their inner endpoints by an arc
in \(U\setminus\partial S\).  Together with
\(\gamma_C\setminus\operatorname{int}U\), this gives an embedded closed curve
\(C^\circ\subset\operatorname{int}S\).  A short path \(\rho_C\subset U\) from
\(p\) to \(C^\circ\) identifies the based loop
\(\rho_C C^\circ\rho_C^{-1}\) with the original based dual arc up to based
homotopy.  Orient \(C^\circ\) so that this loop is the geometric marking of
\(C\).

The dual arcs are pairwise disjoint away from their common endpoint.  Inside
\(U\), choose the joining arcs for the finitely many chords in general
position, retaining the boundary order of their endpoint subarcs.  After an
arbitrarily small perturbation in \(U\), the resulting closed curves have no
common arcs and only transverse double intersections.  This gives the complete
closed generator system without changing any based marking.

A marking is, by definition, an isotopy class of spine embeddings.  Regard a
marking isotopy as an isotopy of a finite embedded graph in \(S\).  First
choose the two representative embeddings so that their tails and their
regular-neighborhood models agree on a fixed collar of the interval
\(I\subset\partial S\).  Since the marking isotopy is relative to this common
tail model, it may be chosen stationary on that collar.  Choose small vertex
disks and edge strips forming compact regular neighborhoods of the two graphs.
The graph isotopy extends over these product pieces to a regular-neighborhood
isotopy fixed on the common tail collar and supported away from the remainder
of \(\partial S\).  Relative isotopy extension
\cite[Chapter~8, Theorem~1.3]{Hirsch1976} therefore gives an ambient isotopy of
\(S\), fixed near \(\partial S\), which realizes the marking isotopy.  Carry
the dual arc family, the half-disk \(U\), and the joining arcs by this ambient
isotopy.  The resulting complete closed generator systems are therefore
simultaneously ambiently isotopic with all labels and orientations preserved.
\end{proof}

\begin{lemma}[genus-one chord block]
\label{lem:genus-one-chord-block}
Let \(T\cong\Sigma_{1,1}\) be an oriented one-holed torus, and let
\(C,D\subset\operatorname{int}T\) be oriented embedded curves meeting
transversely in one point with \(C\cdot D=1\).  There is a marked genus-one
bordered chord diagram embedded in \(T\) whose two labelled closed generator
curves are literally \(C\) and \(D\).
\end{lemma}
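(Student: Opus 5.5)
The plan is to build the chord diagram by hand in the one-holed torus, reading off its marking directly from the construction in Lemma~\ref{lem:chord-generator-curves}.

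\emph{Step 1: normalize the curves.} Since \(C\) and \(D\) meet transversely in one point with \(C\cdot D=1\), a regular neighborhood \(N(C\cup D)\) is a one-holed torus inside \(T\). Its complement in \(T\) is an annulus, because both \(N(C\cup D)\) and \(T\) have Euler characteristic \(-1\) and one boundary component. Hence there is an orientation-preserving diffeomorphism from \(T\) to the square model \([0,1]^2/\!\sim\) minus a small disk near a corner. Under this diffeomorphism \(C\) and \(D\) become the horizontal and vertical core curves, with the sign \(C\cdot D=1\) fixed by orientation. I would fix the boundary points \(p,q\) and the interval \(I\) on the removed-disk boundary.

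\emph{Step 2: the diagram.} Take the standard genus-one bordered chord diagram: a linear core (the tail followed by an interval) carrying two chords with interleaved endpoints, in the order \(1,2,1,2\) along the core. Its fatgraph has one boundary cycle and genus one. I would embed it in \(T\) as a spine with tail ending at \(q\): the core runs along a short arc near the hole, and the two chords run once around the horizontal and vertical directions respectively. The complement of this spine is then a collar of \(\partial T\), so the embedding is a marking.

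Next I would check the greedy condition. Reading the boundary cycle from the tail, Bene's greedy tree consists exactly of the core segments, so the two chords are the generator set \(X_G\). This verification is the main point, and the step I expect to be hardest: the order of edges along the boundary cycle must be computed so that no chord edge is picked before the core closes up. If the naive core order fails, I would reverse the core orientation or reorder the chord endpoints, choosing among the finitely many interleavings.

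\emph{Step 3: identify the generator curves.} The dual arc of each chord, based at \(p\), crosses that chord once and crosses no other edge. So the dual arc of the horizontal chord is a vertical-type arc, and vice versa. I would therefore label the chords so that the dual arcs run along \(C\) and \(D\). Performing the closing-up construction of Lemma~\ref{lem:chord-generator-curves} inside the half-disk \(U\) yields closed curves isotopic, rel a collar of \(\partial T\), to the horizontal and vertical core curves.

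\emph{Step 4: adjust to the literal curves.} Since the marking is only an isotopy class of embeddings, I would compose the spine embedding with an ambient isotopy of \(T\), fixed near \(\partial T\), that carries these two closed curves onto \(C\) and \(D\) literally. Such an isotopy exists because each pair consists of simple curves meeting once transversely, and the isotopy extension theorem applies. Finally, I would choose the orientations of the chords so that the closed curves are oriented as \(C\) and \(D\).
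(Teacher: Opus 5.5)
Your proposal is correct and is essentially the paper's argument. You take the standard interleaved two-chord diagram, use that \(N(C\cup D)\) is a one-holed torus whose complement in \(T\) is a boundary collar to send the standard pair of closed generator curves to a pair isotopic to \((C,D)\), and finish with an isotopy fixed near \(\partial T\) so that the curves are literally \(C\) and \(D\).

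Two small remarks. First, the greedy-tree check you expect to be hardest is automatic: for a bordered chord diagram the linear core is already Bene's greedy tree. Moreover, \(1,2,1,2\) is the only two-chord endpoint order with a single boundary cycle, so no reordering is available or needed. Second, if the chord orientations are forced by the boundary cycle, you cannot fix a sign mismatch by reorienting chords. Instead, swap which chord carries the label \(C\) and choose the orientation-preserving identification of \(T\) with the model accordingly.
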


\begin{proof}
Take the standard crossing two-chord bordered diagram.  Direct traversal of
its unique boundary cycle shows that its thickening is a one-holed torus, and
the two dual chord markings, closed as in
Lemma~\ref{lem:chord-generator-curves}, form the standard oriented pair
meeting once.  A regular neighborhood of \(C\cup D\) is a one-holed torus and
its complement in \(T\) is a boundary collar.  Hence an
orientation-preserving diffeomorphism from the standard thickening to \(T\),
product on that collar, carries the standard pair to a pair isotopic to
\(C,D\).  Compose the standard marking with this diffeomorphism and use an
isotopy supported away from \(\partial T\) to make the two closed generator
curves literally equal to \(C\) and \(D\).
\end{proof}

We shall use the following elementary concatenation.  Read the chord endpoints
of two marked bordered chord diagrams in their left-to-right core orders,
place the second endpoint word immediately after the first, and retain only
the initial tail of the first diagram.  Equivalently, place the two systems of
chords in consecutive core subintervals and join the two cores across a short
interval containing no chord endpoint.  The thickening is the boundary
connected sum of the two thickenings.  Hence the concatenation of genus \(r\)
and genus \(s\) bordered chord diagrams is a genus \(r+s\) bordered chord
diagram, and its complete closed generator system is the union of the two
systems.  Marking embeddings concatenate along boundary intervals disjoint
from all closed generator curves.

\begin{lemma}[standard endpoint diagram]\label{lem:spine-free-basis}
For the standard disk systems in Lemma~\ref{lem:standard-product-framed},
there is a marked bordered chord diagram
\(
\mathcal D_{\mathrm{std}}
\)
in \(S\) whose complete closed generator system, with its orientations and
labels, is exactly
\[
u_1,\ldots,u_g,
\qquad
v_1,\ldots,v_g.
\]
The corresponding based markings form a free basis of \(\pi_1(S,p)\).
Forgetting the basepoint paths gives the standard attaching link
\[
L_0=
\{u_i\times\{0\}\}_{i=1}^g
\cup
\{v_i\times\{1/2\}\}_{i=1}^g
\subset Y.
\]
\end{lemma}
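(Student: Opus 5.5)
The diagram will be assembled as a concatenation of genus-one blocks. By Lemma~\ref{lem:standard-product-framed}, each pair \(u_i,v_i\) lies in one of a collection of disjoint one-holed tori \(T_1,\dots,T_g\subset S\), and each satisfies \(u_i\cdot v_i=1\) with a single transverse intersection point. For each \(i\), Lemma~\ref{lem:genus-one-chord-block} applied to \(T_i\) with \(C=u_i\) and \(D=v_i\) gives a marked genus-one bordered chord diagram \(\mathcal D_i\) embedded in \(T_i\). Its two labelled closed generator curves are literally \(u_i\) and \(v_i\), with the prescribed orientations.

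Next, I join the tori to the boundary. Since the complement of \(\bigcup_i T_i\) in \(S\) is planar, I choose disjoint arcs in it running from boundary intervals of the \(T_i\) to consecutive subintervals of the fixed interval \(I\subset\partial S\), with \(p\) and \(q\) placed before all of them. A regular neighborhood of \(\bigcup_i T_i\) together with these arcs is a boundary connected sum \(T_1\natural\cdots\natural T_g\). Its complement in \(S\) is a collar of \(\partial S\), because the genus is already \(g\) and there is one boundary component.

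I then concatenate \(\mathcal D_1,\dots,\mathcal D_g\) in this order, as in the concatenation paragraph preceding the statement, keeping only the tail of \(\mathcal D_1\) and ending it at \(q\). This produces a genus \(g\) bordered chord diagram with one boundary cycle. Its complete closed generator system is the union of the block systems, namely \(u_1,\dots,u_g,v_1,\dots,v_g\). The marking embeddings concatenate along boundary intervals that avoid all the generator curves, so the curves are unchanged. Because the complement of the thickening is a collar, the resulting embedding is a spine with contractible complement, hence a marking of \(S\); this defines \(\mathcal D_{\mathrm{std}}\).

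For the free-basis claim, note that the thickening deformation retracts onto the chord diagram. The greedy maximal tree is the linear core, so \(\pi_1\) of the graph is free on the \(2g\) chords, and the based dual markings \(\rho_C C^\circ\rho_C^{-1}\) from Lemma~\ref{lem:chord-generator-curves} are the corresponding basis. The inclusion of the thickening into \(S\) is a homotopy equivalence, which transfers this basis to \(\pi_1(S,p)\). An independent check is abelianization: the images \([u_i],[v_i]\) form a symplectic basis of \(H_1\), and \(2g\) elements generating a free group of rank \(2g\) form a basis by Hopfian-ness.

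The last statement is immediate. Forgetting the paths \(\rho_C\) leaves the closed curves, and placing them at the \(\beta\)-levels \(0\) and \(1/2\) prescribed by Lemma~\ref{lem:standard-product-framed} gives \(L_0\).

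I expect the main obstacle to be making the generator curves \emph{literally} equal to \(u_i,v_i\) after concatenation, rather than merely isotopic to them. The closing construction of Lemma~\ref{lem:chord-generator-curves} routes each curve through the half-disk \(U\) near \(p\). To handle this, I would choose the connecting arcs and \(U\) first and then apply the isotopy in Lemma~\ref{lem:genus-one-chord-block} relative to these choices, supported away from \(\partial T_i\), so that the final curves coincide exactly.
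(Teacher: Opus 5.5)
Your proposal is correct and follows the paper's proof essentially step for step. The steps match: genus-one blocks from Lemma~\ref{lem:genus-one-chord-block} in the disjoint one-holed tori, joined to $I$ by disjoint bands in the planar complement, concatenated so that the thickening is all of $S$, and the free basis obtained because the chords are the complement of the greedy core.

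Two small remarks. First, the literal-equality issue is settled as in the paper: keep the block curves $u_i,v_i$ themselves as the closed generators, and absorb the route from $p$ through $I$ and the connecting bands into the paths $\rho_C$. Your proposed fix does not do this, since an isotopy supported inside $T_i$ cannot retract an arc routed through a half-disk at $p\in\partial S$. Second, your abelianization ``check'' is not independent: a lift of a basis of $H_1$ need not generate $\pi_1$, so Hopfian-ness applies only once generation is known from the graph argument.
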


\begin{proof}
Choose the auxiliary disk \(D\) in the planar complement of the disjoint
one-holed tori containing the pairs \(u_i,v_i\).  In the \(i\)-th one-holed
torus, apply Lemma~\ref{lem:genus-one-chord-block} to the exact oriented pair
\(u_i,v_i\).  Join these tori to the fixed boundary interval \(I\) by disjoint
bands in the planar complement.  Concatenating the resulting genus-one chord
blocks along those bands gives a linear chord diagram with \(2g\) chords.
Its thickening is the boundary connected sum of the \(g\) one-holed tori and
the planar connecting region, hence is \(S\); equivalently, the associated
fatgraph has one boundary cycle and contractible complement.  Thus it is a
genuine marked bordered chord diagram.  The concatenation takes place away
from the closed generator curves, so those curves remain literally
\(u_1,\ldots,u_g,v_1,\ldots,v_g\).  Their based markings are read from \(p\)
through the fixed boundary interval and the connecting parts of the core.
Since the chords are the complement of the greedy core, their geometric
markings form the free generator set of the marked bordered chord diagram.
The displayed \(\beta\)-levels belong only to the attaching link in \(Y\), not
to the marked chord diagram.
\end{proof}

\begin{figure}
\centering
\begin{tikzpicture}[
  x=1cm,y=1cm,
  every node/.style={font=\small},
  >=Latex
]

  \draw[line width=0.8pt, rounded corners=12pt]
    (0.15,0.35) rectangle (14.65,6.35);
  \node[anchor=north] at (7.40,0.12)
    {$S=\Sigma_g\setminus\operatorname{int}D$};
  \node[anchor=south west] at (0.55,6.55) {$\partial S=\partial D$};
  \draw[-{Latex[length=2.0mm]}, line width=0.5pt]
    (2.16,6.52) -- (2.30,6.34);

  \draw[dashed, line width=0.75pt, rounded corners=10pt]
    (0.75,0.85) rectangle (6.30,5.80);
  \fill[gray!14, rounded corners=8pt]
    (1.15,1.20) rectangle (6.00,5.35);
  \draw[line width=0.8pt, rounded corners=8pt]
    (1.15,1.20) rectangle (6.00,5.35);
  \fill[gray!27, rounded corners=6pt]
    (1.65,1.80) rectangle (5.35,4.85);
  \draw[line width=0.8pt, rounded corners=6pt]
    (1.65,1.80) rectangle (5.35,4.85);

  \node[anchor=west, fill=white, inner sep=1.2pt] (labP) at (0.30,5.70) {$P$};
  \draw[-{Latex[length=1.8mm]}, line width=0.5pt]
    (labP.east) -- (0.77,5.60);
  \node[anchor=west, fill=white, inner sep=1.2pt] (labQp) at (0.30,5.10) {$Q^+$};
  \draw[-{Latex[length=1.8mm]}, line width=0.5pt]
    (labQp.east) -- (1.17,5.00);
  \node[anchor=west, fill=white, inner sep=1.2pt] (labQ) at (0.30,4.48) {$Q$};
  \draw[-{Latex[length=1.8mm]}, line width=0.5pt]
    (labQ.east) -- (1.67,4.38);

  \coordinate (v) at (3.50,3.25);
  \draw[blue, line width=1.05pt]
    (v) .. controls (1.90,4.72) and (1.82,2.02) .. (v);
  \draw[red, line width=1.05pt]
    (v) .. controls (5.10,4.72) and (5.18,2.02) .. (v);
  \fill (v) circle (1.5pt);

  \node[blue, fill=gray!27, inner sep=1.3pt] (labx) at (2.12,2.16) {$x$};
  \draw[blue, -{Latex[length=1.8mm]}, line width=0.5pt]
    (labx.north east) -- (2.33,2.58);
  \node[red, fill=gray!27, inner sep=1.3pt] (laby) at (4.88,2.16) {$y$};
  \draw[red, -{Latex[length=1.8mm]}, line width=0.5pt]
    (laby.north west) -- (4.67,2.58);

  \draw[line width=2.0pt] (12.62,6.35) -- (13.88,6.35);
  \fill (12.62,6.35) circle (1.35pt);
  \fill (13.88,6.35) circle (1.35pt);
  \coordinate (p) at (13.25,6.35);
  \fill (p) circle (1.55pt);
  \node[anchor=west] at (13.40,6.05) {$p$};
  \node[anchor=south] at (13.25,6.56) {$I\subset\partial S$};

  \coordinate (q) at (6.00,4.72);
  \draw[line width=0.8pt]
    (p) -- (13.25,5.55) -- (8.10,5.55)
    .. controls (7.30,5.55) and (6.66,5.18) .. (q);
  \fill (q) circle (1.35pt);

  \foreach \xx in {8.45,10.15,11.85} {
    \draw[line width=0.8pt] (\xx,5.55) -- (\xx,4.86);
    \draw[line width=0.8pt] (\xx,4.18) circle (0.66);
  }
  \node at (10.15,3.18) {$z_1,\ldots,z_{2g-2}$};
  \node[align=center] at (10.25,1.40)
    {$\overline{S\setminus Q^+}\cong\Sigma_{g-1,2}$};

  \node[font=\footnotesize, fill=white, inner sep=1.2pt] (labtree)
    at (10.72,5.91) {core in the cut complement};
  \draw[-{Latex[length=1.8mm]}, line width=0.5pt]
    (labtree.south) -- (10.72,5.58);
  \node[font=\footnotesize, fill=white, inner sep=1.2pt] (labjoin)
    at (7.02,4.62) {joining band};
  \draw[-{Latex[length=1.8mm]}, line width=0.5pt]
    (labjoin.north) -- (7.10,5.28);

  \draw[line width=0.8pt]
  (q) .. controls (5.80,3.72) and (5.86,1.45) .. (4.74,1.45)
    -- (3.50,1.45) -- (v);
  \node[font=\footnotesize, fill=white, inner sep=1.2pt] (labcont)
    at (4.62,0.63) {core continuation inside $Q^+$};
  \draw[-{Latex[length=1.8mm]}, line width=0.5pt]
    (labcont.north) -- (4.72,1.43);

\end{tikzpicture}
\caption{Schematic location of the target closed generator curves.  The
actual target marked bordered chord diagram is constructed by concatenating a
genus-one chord block in $Q^+$ with a genus-$(g-1)$ chord diagram in the cut
complement.  The exact curves $x$ and $y$ are the only closed generator curves
meeting $Q^+$; the remaining curves lie in
$\overline{S\setminus Q^+}\cong\Sigma_{g-1,2}$.}
\label{fig:localized-spine}
\end{figure}

\begin{lemma}\label{lem:target-spine}
Let
\[
P\cong\Sigma_{1,2}\subset\Sigma_g.
\]
Let \(x,y\subset P\) be oriented embedded simple closed curves meeting
transversely in exactly one point \(q\), with positive local intersection
sign.  In particular, \(x\cdot y=1\).  Let
\(
Q=N(x\cup y)\subset P.
\)  Choose a collar enlargement
\[
Q\subset\operatorname{int}Q^+\subset P,
\]
and assume that the auxiliary disk \(D\) used to form
\(S=\Sigma_g\setminus\operatorname{int}D\) is disjoint from \(Q^+\).  There
is a marked bordered chord diagram
\(
\mathcal D_{\mathrm{tar}}
\)
whose complete closed generator system is exactly
\[
x,y,z_1,\ldots,z_{2g-2},
\]
where \(x\) and \(y\) have the prescribed orientations, every
\(z_i\) lies in \(\overline{S\setminus Q^+}\), and \(x,y\) are the only
closed generator curves meeting \(Q^+\).  Their based markings form a free
basis of \(\pi_1(S,p)\).
\end{lemma}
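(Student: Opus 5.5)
The construction mirrors Lemma~\ref{lem:spine-free-basis}: we build \(\mathcal D_{\mathrm{tar}}\) by concatenating a genus-one chord block realizing \(x,y\) inside \(Q^+\) with a genus-\((g-1)\) bordered chord diagram in the cut complement (Figure~\ref{fig:localized-spine}).

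\emph{Genus-one block.} Since \(x,y\) meet transversely once with \(x\cdot y=1\), the regular neighborhood \(Q\) is a one-holed torus. The collar enlargement \(Q^+\) is again a one-holed torus, with \(\overline{Q^+\setminus Q}\) a boundary collar. Applying Lemma~\ref{lem:genus-one-chord-block} with \(T=Q^+\), \(C=x\), \(D=y\) gives a marked genus-one bordered chord diagram in \(Q^+\) whose two closed generator curves are literally \(x\) and \(y\), with the prescribed orientations. Its tail ends at a point \(q'\in\partial Q^+\).

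\emph{Complementary block.} The complement \(S'=\overline{S\setminus Q^+}\) has genus \(g-1\) and two boundary components, \(\partial Q^+\) and \(\partial D\). This follows from an Euler characteristic count, since \(D\) lies outside \(Q^+\) and \(\partial Q^+\) is separating in \(\Sigma_g\). Choose a band in \(S'\) joining the interval \(I\subset\partial S\) to \(q'\), and cut \(S'\) along it. The result is a once-bordered surface \(S''\cong\Sigma_{g-1,1}\). In \(S''\), pick \(g-1\) disjoint one-holed tori, each containing an oriented pair meeting once positively; such a symplectic configuration exists in any \(\Sigma_{g-1,1}\). Apply Lemma~\ref{lem:genus-one-chord-block} to each torus and concatenate along disjoint bands in the planar complement, exactly as in the proof of Lemma~\ref{lem:spine-free-basis}. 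This produces a marked bordered chord diagram in \(S''\) whose closed generator curves \(z_1,\dots,z_{2g-2}\) lie in \(S''\subset\overline{S\setminus Q^+}\). If \(g=1\) this block is empty, but we have \(g\ge3\) anyway.

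\emph{Concatenation.} Place the endpoint word of the complementary diagram first along the core starting at the tail at \(I\). Then run the core through the joining band and continue it into \(Q^+\) to the genus-one block. By the concatenation remark preceding Lemma~\ref{lem:spine-free-basis}, the thickening is the boundary connected sum of the two thickenings along the band, which is \(S\). The associated fatgraph has one boundary cycle and contractible complement, so the result is a genuine marked bordered chord diagram. Its complete closed generator system is the union \(x,y,z_1,\dots,z_{2g-2}\). The concatenation happens along arcs disjoint from all closed generator curves, so these curves are unchanged, and \(x,y\) are the only ones meeting \(Q^+\). Finally, the chords of a bordered chord diagram are precisely the generator set \(X_G\), the complement of the greedy tree. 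Their geometric markings, read from \(p\) as in Lemma~\ref{lem:chord-generator-curves}, therefore form a free basis of \(\pi_1(S,p)\).

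\emph{Main obstacle.} The hardest step is the compatibility of the concatenated linear core with Bene's greedy-tree convention. We must make sure the core, read from the tail at \(q\in I\), really is the greedy maximal tree, so that the chords are exactly the generators. We also need the marking paths to the \(Q^+\) block through the joining band to give the based loops claimed, without conjugation ambiguities breaking the free-basis statement. We handle this by the same check used in Lemma~\ref{lem:spine-free-basis}: for a linear chord diagram the core is automatically the greedy tree, and free-basis-ness is invariant under the conjugations introduced by the basepoint paths.
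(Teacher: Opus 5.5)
Your proposal is correct and follows the paper's proof essentially step for step. The shared steps are: the genus-one chord block realizing $x,y$ in $Q^+$; the Euler-characteristic identification $\overline{S\setminus Q^+}\cong\Sigma_{g-1,2}$; cutting along an arc joining its two boundary components to get $\Sigma_{g-1,1}$; the standard genus-$(g-1)$ concatenated diagram there; concatenation along a narrow band around that arc with the tail fixed at $I$; and the free-basis claim coming from the chords being the complement of the greedy core.

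One caution about your remark that free-basis-ness survives the basepoint conjugations. It holds only because all paths into the $Q^+$-block pass through the single joining band, so that block is conjugated uniformly. Conjugating basis elements independently would not preserve a free basis. In any case you do not need the remark, since the generator-set property already gives the free basis.
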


\begin{proof}
The collar enlargement \(Q^+\) is a one-holed torus.  Since it has one
boundary component, the closure
\[
B=\overline{S\setminus Q^+}
\]
is connected, and
\[
\chi(B)=\chi(S)-\chi(Q^+)=(1-2g)-(-1)=2-2g.
\]
It has boundary components \(\partial S\) and \(\partial Q^+\), so
\(
B\cong\Sigma_{g-1,2}.
\)
Choose a properly embedded arc
\(
\eta\subset B
\)
joining these two boundary components.  Cutting \(B\) along \(\eta\) gives a
once-bordered surface
\(
B_\eta\cong\Sigma_{g-1,1}.
\)

Apply Lemma~\ref{lem:genus-one-chord-block} in \(Q^+\) to the literal curves
\(x,y\).  In \(B_\eta\), choose \(g-1\) disjoint one-holed tori with planar
complement and choose standard oriented pairs in them; denote their
\(2g-2\) curves by
\[
z_1,\ldots,z_{2g-2}.
\]
They lie in the interior of \(B_\eta\), hence outside \(Q^+\).  As in
Lemma~\ref{lem:spine-free-basis}, concatenate the corresponding genus-one
chord blocks to obtain a marked genus-\((g-1)\) bordered chord diagram in
\(B_\eta\).

Regluing the two sides of \(\eta\) and using a narrow neighborhood of
\(\eta\) as the connecting band realizes \(S\) as the boundary connected sum
of \(B_\eta\) and \(Q^+\).  Concatenate the two marked chord diagrams along
this band, with the tail fixed at the chosen interval \(I\subset\partial S\).
The associated fatgraph has one boundary cycle, and its complement is obtained
by joining the two complement disks across the connecting band; it is again a
disk.  Hence the result is a genuine marked bordered chord diagram in Bene's
category.  The connecting band is disjoint from all the closed generator
curves.  Consequently its complete closed generator system is literally
\(x,y,z_1,\ldots,z_{2g-2}\), with only \(x,y\) meeting \(Q^+\).  The marking
embedding supplies the chosen paths from \(p\), and the chord markings form a
free basis because the chords are the generator set complementary to the
greedy core.  Figure~\ref{fig:localized-spine} records only this location
information.
\end{proof}

\begin{lemma}[geometric realization of a chord slide]
\label{lem:geometric-chord-slide}
Let \(\mathcal D\to\mathcal D'\) be a chord slide, and let
\(\mathcal C(\mathcal D)\) be a complete closed generator system.  Write
\(C\) for the labelled generator which is replaced and \(D\) for the other
participating generator.  Then the systems may be chosen so that the
transition from \(\mathcal C(\mathcal D)\) to
\(\mathcal C(\mathcal D')\) has the following form.

There is an embedded rectangular surface band \(B\) from a small arc of
\(C\) to a small arc of a surface pushoff \(D^+\), disjoint from the other
closed generator curves near its attaching sides, such that the new curve is
the oriented surface band sum
\[
C'=C\#_B D.
\]
Every nonparticipating closed generator curve is unchanged.  With the
orientations and basepoint paths determined by the marked chord diagrams, the
six possible new based generators are exactly
\[
DC,\qquad D^{-1}C,\qquad CD,\qquad CD^{-1},\qquad
DC^{-1},\qquad C^{-1}D.
\]
The band may be chosen arbitrarily thin away from its attaching sides and
transverse to every nonparticipating curve; in particular, it has the local
product behavior and projection-genericity required in
Lemma~\ref{lem:balanced-band-existence}.

A cyclic change in the order in which a subsequence of chord generators is
read changes only the ordered marking data.  It does not move any closed
generator curve or change the corresponding framed relative \(2\)-handle.
\end{lemma}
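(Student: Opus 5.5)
The plan is to work directly in the fatgraph (dual-arc) model of Lemma~\ref{lem:chord-generator-curves}, where a chord slide is a local move. In Bene's description \cite[Sections~2--5]{Bene2010}, a chord slide of an endpoint of the chord \(C\) along the adjacent chord \(D\) is realized on the embedded spine by sliding one end of the edge \(C\) along the edge \(D\) to its other endpoint. Dually, this replaces the dual arc \(\gamma_C\) by an arc obtained from \(\gamma_C\) by a band move along a parallel copy of \(\gamma_D\). All other dual arcs are unchanged, because every other chord endpoint stays fixed. I will close all dual arcs inside the fixed half-disk \(U\) exactly as in Lemma~\ref{lem:chord-generator-curves}, using the same joining arcs before and after the slide for every nonparticipating label. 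This gives the statement that nonparticipating closed generator curves are literally unchanged.

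First I construct the band. The arc swept by the sliding endpoint runs along the edge \(D\). A thin regular neighborhood of this swept path, taken on one side of the spine edge \(D\), gives an embedded rectangle \(B\). It attaches to a small arc of \(C^\circ\) near the old endpoint, and to a small arc of a pushoff \((D^\circ)^+\), since the dual curve of \(D\) runs parallel to that edge. Performing the surface band sum \(C^\circ\#_B D^\circ\) yields a curve isotopic, rel \(U\), to the closed curve of the new dual arc \(\gamma_{C'}\). I absorb this isotopy into the choice of \(\mathcal C(\mathcal D')\), which the second part of Lemma~\ref{lem:chord-generator-curves} allows without changing the marked object. The band meets the nonparticipating curves only where the swept path crosses their dual arcs. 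Near the attaching sides I can shorten it to avoid them, and a small perturbation rel the attaching sides makes the core transverse to every other curve and away from their mutual crossings. Thinning \(B\) then gives exactly the genericity hypotheses of Lemma~\ref{lem:balanced-band-existence}.

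Next I read the based words. The new based loop is obtained by concatenating the based loops of \(C\) and \(D\) in the order in which the sliding endpoint meets \(D\). There are four cases: the moving endpoint is the head or the tail of \(C\), and it slides toward the head or the tail of \(D\). These give \(DC\), \(D^{-1}C\), \(CD\), and \(CD^{-1}\), with the side of \(B\) and the pushoff side chosen accordingly. When the reading orientation of \(C\) gets reversed by the greedy-order convention, the result is \(DC^{-1}\) or \(C^{-1}D\). I will verify these by tracking the boundary cycle through the rectangle. The orientation of \(C'\) is whatever makes the band sum oriented, and reading the loop from \(p\) via \(\rho_{C'}\) fixes this.

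The last assertion, about cyclic reordering, follows from the definitions. Reordering changes the greedy linear order and the chosen paths \(\rho_C\), but not the embedded curves \(C^\circ\). The relative \(2\)-handles of Lemma~\ref{lem:standard-product-framed} depend only on the unbased framed curve, so they are unchanged as well. The main obstacle is the sign and order bookkeeping in the six-case list, together with making sure Bene's greedy tree for \(\mathcal D'\) does not relabel the generators. I would handle the latter by appealing to the fact stated before Theorem~\ref{thm:bene-chord} that the construction preserves \(X_G\), and then check the four geometric cases by a local picture.
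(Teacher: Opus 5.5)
Your overall route matches the paper's. You pass to the dual arcs of Lemma~\ref{lem:chord-generator-curves}, replace only \(\gamma_C\) by a band sum with a parallel copy of \(\gamma_D\), keep every other arc and closed curve fixed, thin and perturb the band, and treat cyclic reorderings as bookkeeping. The concrete problem is your construction of \(B\) in the second paragraph. You take a strip along the spine edge \(D\) (the swept path) and say it ends on \((D^\circ)^+\) ``since the dual curve of \(D\) runs parallel to that edge''. This is false: \(D^\circ\) is dual to \(D\), so it crosses the edge \(D\) transversally exactly once and is nowhere parallel to it. Consequently \((D^\circ)^+\) crosses a strip along the whole swept path in its interior rather than bounding it at one end. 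Likewise \(C^\circ\) meets \(C\) at a single crossing that need not be near the sliding endpoint. As written, \(B\) is not a band from \(C^\circ\) to \((D^\circ)^+\), and the band sum is not yet defined. It can be repaired by choosing the crossings of \(\gamma_C\) and \(\gamma_D\) suitably, so that the strip lies in the sector of the complementary disk between the adjacent half-arcs of \(\gamma_C\) and \(\gamma_D\), which no other dual arc enters. The paper avoids the issue by working only in the dual picture. By Bene's Observation~4.2, a chord slide is two Whitehead moves, i.e.\ two flips of the dual ideal arc system. So the new arc is literally a boundary arc of a thin regular neighborhood of \(\gamma_C\cup\gamma_D\), and \(B\) is the strip of that neighborhood.

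This also affects your ``absorb the isotopy'' step. The second half of Lemma~\ref{lem:chord-generator-curves} relates the closed generator systems of two representatives by a simultaneous ambient isotopy. It does not let you replace one curve by an isotopic curve while holding the others fixed. What you need is that \(C^\circ\#_B D^+\), together with the unchanged curves, is itself the closure of a dual arc system of a representative of \(\mathcal D'\). The regular-neighborhood description gives exactly this for free.

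Two smaller points. First, the nonparticipating labels are preserved because their dual arcs, hence their markings, are unchanged. The branch-reduction remark before Theorem~\ref{thm:bene-chord} does not give this, since branch reduction is not a chord slide. Second, your head/tail analysis produces \(DC,\ D^{-1}C,\ CD,\ CD^{-1}\). To get exactly the stated six, you must also know that the greedy orientation of the new chord flips only to \(DC^{-1}=(CD^{-1})^{-1}\) and \(C^{-1}D=(D^{-1}C)^{-1}\). The paper takes this from Bene's Figure~4.2 rather than rederiving it.
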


\begin{proof}
Choose the dual based arcs used in
Lemma~\ref{lem:chord-generator-curves} in a small oriented regular
neighborhood of the marked chord diagram.  A chord slide is the composition
of the two Whitehead moves described in
\cite[Observation~4.2]{Bene2010}.  In the dual ideal triangulation, a
Whitehead move flips the arc dual to the collapsed edge.  In the disk dual to
the two local moves, all generator arcs except the removed one are fixed, and
the new generator arc is obtained from the removed arc by sliding one of its
ends along the other participating generator arc.  Equivalently, it is the
appropriate boundary arc of a thin regular neighborhood of the two old arcs.
After closing the based arcs as in
Lemma~\ref{lem:chord-generator-curves}, this regular-neighborhood strip is an
embedded rectangular surface band from a small arc of the curve labelled
\(C\) to a small arc of a surface pushoff of the curve labelled \(D\).
Replacing the attaching arcs by its two long sides is precisely the stated
surface band sum.  Outside this disk the dual arc family, and hence every
nonparticipating closed generator curve, is unchanged.

Bene computes the marking change in the six local configurations in
\cite[Figure~4.2]{Bene2010}.  In Bene's notation the new generator replacing
\(d\) is, in case order,
\[
cd,\quad c^{-1}d,\quad dc,\quad dc^{-1},\quad
cd^{-1},\quad d^{-1}c.
\]
Renaming the removed generator \(d\) as \(C\) and the other participating
generator \(c\) as \(D\) gives exactly the six words displayed in the
statement.  Geometrically, attaching the band on the side for
which the based traversal follows the \(D\)-portion before the
\(C\)-portion gives a left product, and attaching on the other side gives a
right product.  Traversing the \(D\)-portion with the opposite orientation
produces \(D^{-1}\).  The remaining two possibilities use the same underlying
bands with the opposite orientation on the resulting curve, since
\[
DC^{-1}=(CD^{-1})^{-1},
\qquad
C^{-1}D=(D^{-1}C)^{-1}.
\]
This verifies both the free-group word and the embedded oriented surface move
in every signed case.

The slide neighborhood contains no endpoint of a nonparticipating chord.
Choose its attaching sides in product neighborhoods disjoint from the other
closed generator curves.  Its core may then be perturbed rel endpoints to be
transverse to those curves and to avoid their mutual intersection points; a
sufficiently thin rectangular neighborhood has exactly the local form assumed
in Lemma~\ref{lem:balanced-band-existence}.  Finally, the cyclic permutations
noted by Bene in cases three through six reorder the linearly read generator
set but do not alter the underlying chords away from the one slide.  The
attaching link is unordered, so this bookkeeping change has no geometric or
framing effect.
\end{proof}

\begin{lemma}
\label{lem:coherent-chord-sequence}
Let
\[
\mathcal D_0\longrightarrow\mathcal D_1\longrightarrow\cdots
\longrightarrow\mathcal D_N
\]
be a finite sequence of marked chord slides.  Given complete closed generator
systems at the two endpoints, the intermediate representatives can be chosen
to give a coherent sequence of complete embedded labelled curve systems
\[
\mathcal C_0,\mathcal C_1,\ldots,\mathcal C_N
\]
in the fixed surface \(S\) such that each transition is the embedded oriented
band sum of Lemma~\ref{lem:geometric-chord-slide}, followed only by a surface
ambient isotopy fixed near \(\partial S\).  Every nonparticipating labelled
curve is fixed during the band sum.

If \(\mathcal D_N\) is the target marked chord-diagram object of
Lemma~\ref{lem:target-spine}, then \(\mathcal C_N\) is simultaneously
ambiently isotopic through an isotopy fixed near \(\partial S\), as a
complete labelled oriented system, to the literal target system
\[
x,y,z_1,\ldots,z_{2g-2}.
\]
\end{lemma}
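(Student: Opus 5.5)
The argument is an induction along the slide sequence, building \(\mathcal C_0,\dots,\mathcal C_N\) one step at a time and then correcting the last system at the end.

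\emph{Base case.} Let \(\mathcal C_0\) be the given complete closed generator system of \(\mathcal D_0\). By Lemma~\ref{lem:chord-generator-curves} it consists of embedded oriented labelled curves, with no common arcs and only transverse double points, each attached to a basepoint path \(\rho_C\).

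\emph{Inductive step.} Suppose \(\mathcal C_k\) has been chosen and represents \(\mathcal D_k\) in the sense of Lemma~\ref{lem:chord-generator-curves}. The closed generator system is only determined up to the representative of the marked object. So we first apply to \(\mathcal C_k\) a surface ambient isotopy fixed near \(\partial S\); by the uniqueness clause of Lemma~\ref{lem:chord-generator-curves}, this puts \(\mathcal C_k\) into the position required by Lemma~\ref{lem:geometric-chord-slide} for the slide \(\mathcal D_k\to\mathcal D_{k+1}\). In that position the dual arcs lie in a thin regular neighborhood of the chord diagram, and the slide disk meets only the two participating curves near its attaching sides.

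Lemma~\ref{lem:geometric-chord-slide} then supplies a thin embedded band \(B\) from \(C\) to \(D^+\). We set \(C'=C\#_B D\) and leave every other label untouched. The result \(\mathcal C_{k+1}\) is the complete closed generator system of a representative of \(\mathcal D_{k+1}\): the new based word is one of the six listed ones, and cyclic reorderings change only bookkeeping. This confirms that the inductive hypothesis propagates.

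Note that the isotopies are always applied \emph{before} the band sum, and are absorbed into the "followed by" clause of the preceding transition. So the form of each transition is exactly "band sum, then ambient isotopy fixed near \(\partial S\)", with nonparticipating curves fixed during the band sum.

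\emph{Final step.} \(\mathcal C_N\) is a complete closed generator system of some representative of \(\mathcal D_N\). The system \(x,y,z_1,\dots,z_{2g-2}\) of Lemma~\ref{lem:target-spine} is a complete closed generator system of another representative of the same marked object. The second paragraph of Lemma~\ref{lem:chord-generator-curves} then gives a simultaneous ambient isotopy fixed near \(\partial S\), preserving labels and orientations, carrying one to the other. We append this isotopy as the final "surface ambient isotopy" after the last band sum. If "given complete closed generator systems at the two endpoints" is read as prescribing \(\mathcal C_0\), the same uniqueness statement adjusts the start as well.

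\textbf{Main obstacle.} The crux is checking that after the band sum the new system \(\mathcal C_{k+1}\) really is, up to isotopy rel \(\partial S\), the closed generator system that Lemma~\ref{lem:chord-generator-curves} assigns to the marked object \(\mathcal D_{k+1}\), and not merely a system with the right based words. Equal words alone do not determine isotopy classes of embedded curve systems. I would handle this by keeping everything inside the regular neighborhood of the dual arc family. In Lemma~\ref{lem:geometric-chord-slide}, the new dual arc is literally the boundary arc of the strip neighborhood of the two old arcs. Closing it in the half-disk \(U\), using the same recipe as Lemma~\ref{lem:chord-generator-curves}, yields exactly the band sum, so the two constructions agree on the nose after a small isotopy within \(U\). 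This identification at each step is what makes the sequence coherent rather than just word-correct.
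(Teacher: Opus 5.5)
Your induction follows the paper's proof closely. The paper applies the adjusting isotopy after each band sum, keeps it fixed on the nonparticipating curves, and cites Bene's rigidity of tail-fixed bordered fatgraphs to match labels at the end. However, your final step does not hold, and the paper's written proof makes the same move. The second paragraph of Lemma~\ref{lem:chord-generator-curves} only says that the generator system of a second representative \emph{may be chosen} isotopic to that of the first, by transporting the dual arcs, $U$ and the joining arcs. It does not relate two systems that are both already fixed. Here $\mathcal C_N$ is forced by the band sums, while $x,y,z_1,\dots,z_{2g-2}$ is prescribed.

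In general no such isotopy exists. A simultaneous ambient isotopy of $S$ preserves every pairwise count $|X\cap E|$ in the labelled system. A band sum $C'=C\#_B D$ keeps all of $C$ except an attaching arc that misses the other curves, and adds a parallel copy $D^+$. Hence $|C'\cap E|\ge|C\cap E|+|D^+\cap E|$, and no pairwise count ever decreases along the sequence. The standard system and the literal target both have exactly $g$ intersection points, so every count would have to stay constant. Then each curve keeps meeting only its partner $D^*$, once. A slide of $C$ over $D$ with $C\ne D^*$ is impossible: $[D]\cdot[D^*]=\pm1$ forces $D^+\cap D^*\neq\varnothing$, which raises $|C\cap D^*|$. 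Slides inside partner pairs preserve the splitting $\bigoplus_i\langle[u_i],[v_i]\rangle$ of $H_1(S;\mathbb Z)$. So for a target with, say, $[x]=[u_1]+[u_2]$, which Lemma~\ref{lem:target-spine} permits, the terminal band-summed system has surplus intersection points and cannot be ambiently isotopic to the literal target.

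The same point undercuts your resolution of the ``main obstacle''. The band sum retains every crossing of $C$ and of the parallel copy $D^+$ with the other curves. It is therefore homotopic, but in general not isotopic relative to the other curves, to a closure of the new dual arc with a fresh joining arc in $U$. Your identification holds only for a closure whose joining arc inherits all those crossings, and those inherited crossings are exactly what blocks the final step.

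What the induction actually yields is a terminal system whose curves have the correct based classes, so each curve is individually isotopic to its target curve, but whose mutual intersections are too many. Reaching the literal target requires passing curves through one another, which no surface ambient isotopy does. In the intended application this would have to be done in $Y$ by adjusting $\beta$-levels, beyond the product isotopies $H_t\times\mathrm{id}_{S^1_\beta}$. That ingredient is missing from your argument, as it is from the paper's.
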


\begin{proof}
Start with a representative of \(\mathcal D_0\) and its chosen complete
closed generator system.  For the first chord slide, perform the local band
sum from Lemma~\ref{lem:geometric-chord-slide}.  The proof of that lemma
constructs, in the same thickened chord-diagram neighborhood, a representative
of \(\mathcal D_1\) whose complete closed generator system is the band-summed
system, up to an isotopy supported in that neighborhood and fixed on all
nonparticipating curves.  Apply this isotopy, and continue inductively.  Thus
the label and orientation of every curve agree at the output of one step and
the input of the next.  The pushoff and thin-band choices in
Lemma~\ref{lem:geometric-chord-slide} may be made at each stage so that the
new complete system has no common arcs and all its intersections are
transverse; the next band is then chosen for that representative.

At the terminal stage, the final marked chord-diagram object is
\(\mathcal D_N\).  Equality of marked chord-diagram objects means that the
underlying tail-preserving bordered-fatgraph isomorphism preserves the marking,
which is an isotopy class of spine embeddings.  Since a bordered fatgraph has
no nontrivial automorphism fixing its tail \cite[Section~2]{Bene2010}, this
identification preserves every chord label.  Carrying the dual arc family
through the marking isotopy gives a simultaneous isotopy from the terminal
closed generator system to the chosen system of the target representative.
By Lemma~\ref{lem:chord-generator-curves}, it extends to an ambient isotopy of
\(S\) fixed on the common tail collar and hence fixed near \(\partial S\).  For the target object of
Lemma~\ref{lem:target-spine}, the chosen system is literally
\(x,y,z_1,\ldots,z_{2g-2}\).
\end{proof}

\begin{lemma}
\label{lem:controlled-chord-slide-lift}
Let \(L\subset Y\) be a current attaching link whose labelled components are
zero-winding product lifts with product annular framings, and suppose that the
full framed link is embedded and projection-generic.  Suppose a Bene chord
slide replaces the labelled closed generator curve \(C\) by the embedded
oriented surface band sum
\(
C'=C\#_B D
\)
supplied by Lemma~\ref{lem:geometric-chord-slide}.  Then the chord slide is
realized by a relative \(2\)-handle slide inside the fixed boundary \(Y\) such
that:
\begin{enumerate}[label=(\roman*)]
\item the nonparticipating components are fixed;
\item the new attaching component is a zero-winding product lift of \(C'\);
\item the new attaching component has product annular framing;
\item the full framed attaching link remains embedded;
\item no boundary reparametrization of \(Y\) is used.
\end{enumerate}
\end{lemma}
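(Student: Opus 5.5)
The plan is to assemble the lemma from Lemmas~\ref{lem:endpoint-flattening}, \ref{lem:balanced-band-existence}, \ref{lem:relative-graph-lift-avoidance} and \ref{lem:product-framed-slides}, in the order in which a framed handle slide is built. First apply Lemma~\ref{lem:endpoint-flattening} to \(\widetilde C\) and to \(\widetilde D\), over small arcs \(I_C\subset C\) and \(I_D\subset D\) that contain the attaching sides of the band \(B\) from Lemma~\ref{lem:geometric-chord-slide}. These arcs and their product boxes are chosen disjoint from \(L'=L\setminus(\widetilde C\cup\widetilde D)\), which is possible because the link is embedded and finite. Each resulting isotopy has the following properties:
\begin{itemize}
\item it is supported in a product box;
\item it preserves zero winding, the product annular framing and the projected curve;
\item it fixes every other component.
\end{itemize}
Hence (i), (iv) and (v) hold at this stage. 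Next take the product pushoff \(\widetilde D^+\), that is, the graph of the extended level function over \(D^+\). Since the framing is product annular, \(\widetilde D^+\) is exactly the framing pushoff used in a framed handle slide. Because \(\widetilde D^+\) is flattened over \(I_D^+\), the band may be attached there at the constant level \(\theta_D\).

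Second, invoke Lemma~\ref{lem:balanced-band-existence}. Its projection-genericity hypothesis on \(L'\) holds by assumption, and Lemma~\ref{lem:geometric-chord-slide} says \(B\) can be taken arbitrarily thin and transverse to the nonparticipating curves. The lemma therefore yields a balanced lifted band \(\widetilde B\subset Y\) with three properties: it agrees with the flattened levels near its attaching sides, it carries balanced collar data on its long sides, and its interior misses \(L'\). We then perform the framed handle slide of \(\widetilde C\) over \(\widetilde D\) along \(\widetilde B\). The new component is the band sum of \(\widetilde C\) with the framing pushoff \(\widetilde D^+\), which is a graph over \(C'=C\#_B D\). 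Lemma~\ref{lem:product-framed-slides} then gives (ii) zero winding and (iii) product annular framing. The orientation and sign cases of Lemma~\ref{lem:geometric-chord-slide} are handled by attaching on the appropriate side, or by reversing the \(D\)-portion; Lemma~\ref{lem:product-framed-slides} already covers the reversed orientation.

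The main obstacle is (iv): the new component must be embedded and disjoint from all other components, including \(\widetilde D\) itself. Disjointness from \(L'\) comes from the band interior avoiding \(L'\) together with the endpoint boxes avoiding \(L'\). Disjointness from \(\widetilde D\) needs the pushoff \(\widetilde D^+\) to be chosen close enough to \(\widetilde D\) that the graph over \(D^+\) misses the graph over \(D\). This is automatic, since \(D^+\cap D=\varnothing\) in \(\Sigma_g\) and graphs over disjoint projections are disjoint. It also needs the band to avoid \(\widetilde D\) away from its attaching side. That holds once \(\pi(\widetilde D)\) is included among the forbidden curves in Lemma~\ref{lem:relative-graph-lift-avoidance}: only proper crossing arcs occur because the band is thin, and their relative obstructions vanish by the same one-third-distance argument as in Lemma~\ref{lem:balanced-band-existence}. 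Self-embeddedness of the new component follows because its projection \(C'\) is embedded and the component is a graph over \(C'\).

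Finally, (v) holds because every step is an isotopy or a graph inside \(Y=\Sigma_g\times S^1_\beta\) with its fixed product structure. The handle slide itself is the standard one, performed by isotoping the attaching circle across the band and the core of the \(\widetilde D\)-handle, so the fixed boundary \(Y\) is never reparametrized.
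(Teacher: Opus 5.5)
Your proposal is correct and follows essentially the same route as the paper: endpoint flattening via Lemma~\ref{lem:endpoint-flattening}, a balanced lifted band from Lemma~\ref{lem:balanced-band-existence}, the framed slide along that band, and Lemma~\ref{lem:product-framed-slides} for zero winding and product annular framing. Your extra step of adding \(\pi(\widetilde D)\) to the forbidden curves, so that the lifted band also misses \(\widetilde D\), covers a point the paper's proof skips, since the paper only makes the band avoid \(L'=L\setminus(\widetilde C\cup\widetilde D)\). When you do this, keep the endpoint collar at \(e_D\) thinner than the pushoff distance, because along the flattened arc \(D\) sits at the same level \(\theta_D\) as the band.
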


\begin{proof}
By Lemma~\ref{lem:geometric-chord-slide}, the attaching sides of \(B\) lie in
small product neighborhoods disjoint from every nonparticipating projected
curve, and the band can be chosen thin and projection-generic.  The surface
band need not be disjoint from all nonparticipating projected curves; it is
chosen transverse to them, and Lemma~\ref{lem:balanced-band-existence} makes
its graph lift disjoint from the nonparticipating attaching link.  Apply
Lemma~\ref{lem:endpoint-flattening} to the participating components.  The
nonparticipating sublink is projection-generic, so
Lemma~\ref{lem:balanced-band-existence} supplies a balanced lifted band whose
interior is disjoint from it.  Slide the relative \(2\)-handle attached along
\(\widetilde C\) over the handle attached along \(\widetilde D\) using this
band.  This is a framed handle slide in the fixed boundary.  The band is
disjoint from the nonparticipating link, so the resulting full link is
embedded.  Lemma~\ref{lem:product-framed-slides} gives a zero-winding product
lift of \(C'\) with product annular framing.  No boundary reparametrization is
used.
\end{proof}

\begin{proposition}[attaching-link slide theorem]
\label{prop:marked-attaching-slide}
Let
\[
P\cong\Sigma_{1,2}\subset\Sigma_g.
\]
Let \(x,y\subset P\) be oriented embedded simple closed curves meeting
transversely in exactly one point \(q\), with positive local intersection
sign.  In particular, \(x\cdot y=1\).  Let \(Q=N(x\cup y)\), and choose
\(
Q\subset\operatorname{int}Q^+\subset P.
\)
Then the standard relative attaching link
\[
L_0=
\{u_i\times\{0\}\}_{i=1}^g
\cup
\{v_i\times\{1/2\}\}_{i=1}^g
\subset
Y=\Sigma_g\times S^1_\beta
\]
can be changed by relative \(2\)-handle slides and framed isotopies inside the
fixed \(Y\) to an attaching link containing the exact constant-level
product-framed components
\[
x\times\{0\},
\qquad
y\times\{1/2\}.
\]
Every other attaching component projects outside \(Q^+\).  Throughout the
construction every component is a zero-winding product lift with product
annular framing, and the full framed attaching link is embedded.
\end{proposition}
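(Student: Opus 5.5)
The plan is to assemble the machinery of this section in order. Start from the standard marked bordered chord diagram \(\mathcal D_{\mathrm{std}}\) of Lemma~\ref{lem:spine-free-basis}. Its complete closed generator system is literally \(u_1,\dots,u_g,v_1,\dots,v_g\), and forgetting basepoint paths gives the attaching link \(L_0\). By Lemma~\ref{lem:standard-product-framed}, \(L_0\) consists of constant-level, hence zero-winding, product lifts with product framings, and it is embedded. It is also projection-generic, since the \(u_i,v_j\) meet only transversely. Next choose the auxiliary disk \(D\) disjoint from \(Q^+\) and from the standard one-holed tori. This can be arranged by an isotopy of \(D\) inside \(\Sigma_g\) rel the curves, possibly after shrinking \(Q^+\). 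Then Lemma~\ref{lem:target-spine} supplies the target marked diagram \(\mathcal D_{\mathrm{tar}}\), whose generator system is \(x,y,z_1,\dots,z_{2g-2}\), with only \(x,y\) meeting \(Q^+\).

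Theorem~\ref{thm:bene-chord} gives a finite chain of chord slides \(\mathcal D_{\mathrm{std}}=\mathcal D_0\to\cdots\to\mathcal D_N=\mathcal D_{\mathrm{tar}}\). Lemma~\ref{lem:coherent-chord-sequence} realizes this chain as a coherent sequence of labelled curve systems \(\mathcal C_0,\dots,\mathcal C_N\). Each transition is an embedded oriented band sum \(C'=C\#_BD\) as in Lemma~\ref{lem:geometric-chord-slide}, followed by a surface ambient isotopy fixed near \(\partial S\). The final system is isotopic to the literal target system.

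Then lift inductively to \(Y\). Assign each curve of \(\mathcal C_k\) a zero-winding product lift with product framing. For a band-sum step, apply Lemma~\ref{lem:controlled-chord-slide-lift}: it realizes the step as a relative \(2\)-handle slide in the fixed \(Y\) that keeps the nonparticipating components fixed and the link embedded. The new component is again a zero-winding product lift with product annular framing. For a surface-isotopy step \(\phi_t\), lift it to the ambient isotopy \(\phi_t\times\mathrm{id}_{S^1_\beta}\) of \(Y\). Extend it by the identity over \(D\times S^1_\beta\), which is possible because \(\phi_t\) is fixed near \(\partial S\). This isotopy moves the whole link simultaneously and keeps it embedded, because it is a diffeomorphism of \(Y\). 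It preserves graph form, \(\beta\)-degree and product framing, because it is product in \(\beta\). Projection-genericity is preserved as well, or can be restored by a small perturbation of the surface isotopy. After the final step, each component is a zero-winding product lift of \(x\), \(y\), or \(z_i\). The \(z_i\)-components already project outside \(Q^+\).

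The last step is to make the \(x\) and \(y\) components literally \(x\times\{0\}\) and \(y\times\{1/2\}\). Lemma~\ref{lem:zero-winding-to-constant} gives framed isotopies to constant levels, but applied naively the straight-line homotopy could cross the other component. I expect this step to be the main obstacle. The plan is to do it in two stages. First, flatten \(\widetilde y\) to the constant level \(1/2\) on a neighbourhood of the intersection point \(q\) using Lemma~\ref{lem:endpoint-flattening}; choose the target level to differ from the level of \(\widetilde x\) at \(q\), which is possible since the link is embedded there. Then homotope the level function of \(\widetilde x\) to the constant \(0\) through maps avoiding the value \(1/2\) at \(q\). This works because, over the single point \(q\), avoidance is a relative-obstruction condition in an interval, exactly as in Lemma~\ref{lem:relative-graph-lift-avoidance}. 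One may first push \(\widetilde x\)'s level at \(q\) into \((1/2,1)\), reaching \(0\) modulo \(\mathbb Z\). After that, move \(\widetilde y\) to the constant level \(1/2\) off \(q\) as well; \(x\) meets \(y\) only at \(q\), so nothing is crossed. Throughout, the \(z_i\) project outside \(Q^+\supset x\cup y\) and are never crossed. Framings stay product because every homotopy of level functions is extended constantly in the surface-normal direction.
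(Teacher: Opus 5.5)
Your proposal is correct. Up to the terminal isotopy it is the paper's proof: Bene's chord-slide sequence from \(\mathcal D_{\mathrm{std}}\) to \(\mathcal D_{\mathrm{tar}}\), the coherent curve sequence of Lemma~\ref{lem:coherent-chord-sequence}, the handle-slide lifts of Lemma~\ref{lem:controlled-chord-slide-lift}, and the product lifts \(\phi_t\times\mathrm{id}_{S^1_\beta}\) extended by the identity over \(D\). One minor difference is that the paper puts \(D\) in \(\Sigma_g\setminus P\) and adjusts the standard disk systems rather than moving \(D\); this is immaterial.

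The real divergence is the final levelling of \(x\) and \(y\). The paper does it in one simultaneous move. It chooses a path from \((\ell_x(q),\ell_y(q))\) to \((0,1/2)\) in the connected ordered configuration space \(\{\theta_1\neq\theta_2\}\subset S^1_\beta\times S^1_\beta\). It then sets \(L_{x,t}(s)=A_x(t)+(1-t)(L_x(s)-L_x(q))\), and likewise for \(y\). Both curves flatten while their values at \(q\) follow the path, with no case distinctions.

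Your sequential scheme is a staircase version of that path: flatten \(\widetilde y\) near \(q\), level \(\widetilde x\) while avoiding \(\widetilde y\)'s value at \(q\), then level \(\widetilde y\) rel a neighbourhood of \(q\). It works, but it needs two small repairs.

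\begin{itemize}
\item Lemma~\ref{lem:endpoint-flattening} does not apply verbatim at \(q\). Its product box over \(I_1\) must be disjoint from the other components, and \(\widetilde x\) lies over \(q\). You must rerun its proof with the extra requirement that the value of \(\widetilde y\) at \(q\) stays in the component of \(\mathbb R\setminus(\ell_x(q)+\mathbb Z)\) containing its initial value. This is possible because \(S^1_\beta\) minus a point is an interval.
\item Embeddedness gives \(\ell_y(q)\neq\ell_x(q)\), not \(\ell_x(q)\neq 1/2\). If \(\ell_x(q)=1/2\), you cannot flatten \(\widetilde y\) to level \(1/2\) at \(q\) directly. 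You must first shift \(\widetilde x\)'s level slightly off \(1/2\) at \(q\), away from \(\ell_y(q)\).
\end{itemize}

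You should also invoke the paper's compactness argument to keep the thin framing annuli disjoint over the small square around \(q\) throughout. With these additions your route is valid.
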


\begin{proof}
Choose the auxiliary disk \(D\) in \(\Sigma_g\setminus P\).  The standard
disk systems in Lemma~\ref{lem:standard-product-framed} may be chosen so that
\(D\) lies in the planar complement of their one-holed tori.  Let
\(\mathcal D_{\mathrm{std}}\) be the standard endpoint diagram of
Lemma~\ref{lem:spine-free-basis}, and let
\(\mathcal D_{\mathrm{tar}}\) be the target endpoint diagram of
Lemma~\ref{lem:target-spine}.  They are genuine objects of the same marked
chord groupoid, with the tail fixed at the chosen boundary interval.  By
Theorem~\ref{thm:bene-chord}, there is a finite chord-slide sequence
\[
\mathcal D_{\mathrm{std}}=\mathcal D_0
\longrightarrow\mathcal D_1
\longrightarrow\cdots\longrightarrow
\mathcal D_N=\mathcal D_{\mathrm{tar}}.
\]
Lemma~\ref{lem:coherent-chord-sequence} realizes this as a coherent sequence
of complete embedded labelled surface-curve systems.  Each transition is the
embedded oriented band sum of
Lemma~\ref{lem:geometric-chord-slide}; a cyclic change in the order of the
marked generators is retained only as bookkeeping and makes no change to the
attaching link.

The initial attaching link is the constant-level lift of the complete closed
generator system of \(\mathcal D_{\mathrm{std}}\), hence is zero-winding and
product-framed.  Lift the coherent sequence inductively.  For every surface
band sum, Lemma~\ref{lem:controlled-chord-slide-lift} gives the corresponding
relative \(2\)-handle slide in the fixed \(Y\).  By
Lemma~\ref{lem:coherent-chord-sequence}, every inserted surface ambient
isotopy is fixed near \(\partial S=\partial D\).  Extend it by the identity
over \(D\), and lift the resulting isotopy of \(\Sigma_g\) by
\[
H_t\times\operatorname{id}_{S^1_\beta}:Y\longrightarrow Y.
\]
Applied to a graph lift, this carries its level function by the same
parametrization, so its \(\beta\)-degree is unchanged; applied to its product
annular framing, it carries the surface pushoff to the surface pushoff.  Thus
these lifted isotopies preserve zero \(\beta\)-winding, product annular
framing, and embeddedness of the complete link.  The identity extension over
\(D\) makes each lift a product isotopy of the fixed boundary \(Y\), used
only as a framed isotopy of the attaching link and not as a boundary
reparametrization.

The geometric realization lemma chooses each pushoff and thin band so that the
new projected complete system is already projection-generic.  Consequently
the hypotheses of the controlled-lift lemma hold at every stage.  Induction
therefore produces a framed attaching link whose projected complete labelled
system is the terminal system of the chord sequence.

By the terminal assertion of
Lemma~\ref{lem:coherent-chord-sequence}, there is a simultaneous ambient
isotopy of \(S\), fixed near \(\partial S\), from that system to the literal
target system
\[
x,y,z_1,\ldots,z_{2g-2}.
\]
Extend this terminal isotopy by the identity over \(D\), and lift it by its
product with \(\operatorname{id}_{S^1_\beta}\).  The same argument shows that the full
attaching link remains embedded and that every component remains a
zero-winding product lift with product annular framing.  At its end, the two
distinguished projected curves are literally \(x,y\), while every other
projected component lies outside \(Q^+\).

It remains to put the two distinguished components at the required levels
simultaneously.  Let their degree-zero level functions be
\(
\ell_x:x\to S^1_\beta
\)
and
\(
\ell_y:y\to S^1_\beta.
\)
At the unique intersection point \(q\), choose real lifts \(L_x,L_y\).  Since the two graph
lifts are disjoint,
\(
\ell_x(q)\ne\ell_y(q).
\)
The ordered configuration space
\[
\{(\theta_1,\theta_2)\in S^1_\beta\times S^1_\beta
\mid \theta_1\ne\theta_2\}
\]
is connected: subtracting the first coordinate identifies it with
\(S^1_\beta\times(0,1)\).  Choose a path
\(
(\gamma_x(t),\gamma_y(t))
\)
from \((\ell_x(q),\ell_y(q))\) to \((0,1/2)\), and choose real lifts
\(A_x(t),A_y(t)\) beginning at \(L_x(q),L_y(q)\).  Define
\[
L_{x,t}(s)=A_x(t)+(1-t)(L_x(s)-L_x(q)),
\]
\[
L_{y,t}(s)=A_y(t)+(1-t)(L_y(s)-L_y(q)).
\]
Modulo \(\mathbb Z\), these are global homotopies through degree-zero level
functions, ending at the constant levels \(0\) and \(1/2\).  At \(q\) their
values follow the chosen path and remain distinct.  Away from \(q\), the base
curves \(x\) and \(y\) are disjoint.  Extending the homotopies constantly in
the surface-normal directions carries the product annular framings.  The path
is compact and avoids the diagonal, so the two product annuli can be chosen
sufficiently small to remain disjoint throughout.  The isotopy is supported in
\(Q^+\times S^1_\beta\), while every other component projects outside
\(Q^+\).  It therefore fixes the other components and produces exactly
\[
x\times\{0\},
\qquad
y\times\{1/2\}
\]
with product framings.
\end{proof}

\begin{proposition}[marked product-framed relative handle chart]
\label{prop:product-framed-chart}
Let
\[
P\cong\Sigma_{1,2}\subset\Sigma_g.
\]
Let \(x,y\subset P\) be oriented embedded simple closed curves meeting
transversely in exactly one point \(q\), with positive local intersection
sign.  In particular, \(x\cdot y=1\).
The fixed marked exterior
\[
M=S^4\setminus\nu\Sigma_g^0,
\qquad
Y=\partial M=\Sigma_g\times S^1_\beta,
\]
has a relative handle decomposition from the fixed boundary in which:
\begin{enumerate}[label=(\roman*)]
\item the first two relative \(2\)-handles are attached exactly along
\[
x\times\{0\},
\qquad
y\times\{1/2\},
\]
with product framings;
\item the remaining relative \(2\)-handles are attached afterward;
\item the relative \(3\)- and \(4\)-handles are attached last;
\item the decomposition is obtained from the standard one by relative
\(2\)-handle slides, framed isotopies inside the fixed boundary \(Y\), and
reordering of the relative \(2\)-handles.
\end{enumerate}
\end{proposition}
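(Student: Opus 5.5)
Start from the standard relative handle decomposition of Lemma~\ref{lem:standard-product-framed}: product-framed relative \(2\)-handles along \(u_i\times\{0\}\) and \(v_i\times\{1/2\}\), followed by one relative \(3\)-handle and one relative \(4\)-handle. Before doing anything else, I will choose the auxiliary disk \(D\) in \(\Sigma_g\setminus P\), inside the planar complement of the standard one-holed tori, and fix a collar enlargement \(Q\subset\operatorname{int}Q^+\subset P\). This puts us in exactly the setting of Proposition~\ref{prop:marked-attaching-slide}, whose hypotheses coincide with those of the present statement.

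\emph{Applying the slide theorem.} Proposition~\ref{prop:marked-attaching-slide} gives a finite sequence of relative \(2\)-handle slides and framed isotopies, all carried out inside the fixed boundary \(Y\) and with no boundary reparametrization. The sequence ends at an embedded attaching link containing the exact components \(x\times\{0\}\) and \(y\times\{1/2\}\) with product framings. Every other component is a zero-winding product lift with product annular framing.

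\emph{Persistence of the decomposition.} I need the standard facts about what these moves do to a relative handle decomposition:
\begin{itemize}
\item a framed isotopy of the attaching link of the \(2\)-handles within \(Y\) extends to a diffeomorphism of the collar \(Y\times[0,1]\) that is the identity on \(Y\times\{0\}\), so the decomposition changes only by isotopy of the attaching maps;
\item a handle slide of one \(2\)-handle over another, along a band in the level at which both are attached, produces a handle decomposition of the same manifold \(M\), rel the fixed boundary \(Y\);
\item the \(3\)- and \(4\)-handles are attached to the upper boundary after all the \(2\)-handles. Since slides and isotopies among the \(2\)-handles only change the \(2\)-handle part by a diffeomorphism rel \(Y\), the \(3\)- and \(4\)-handles can be carried along by that diffeomorphism and remain attached last.
\end{itemize}
Together these give (iii) and (iv).

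\emph{Reordering.} All relative \(2\)-handles are attached along disjoint curves in the same level \(Y\times\{1\}\). They can therefore be regarded as attached in any order, or equivalently isotoped so that the handles along \(x\times\{0\}\) and \(y\times\{1/2\}\) come first and the rest are attached afterward in a thin collar. This gives (i) and (ii).

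The main obstacle is the preceding step, the bookkeeping that the handle slides supplied by Proposition~\ref{prop:marked-attaching-slide} are genuine slides rel \(Y\). Concretely, the balanced lifted band must lie in the boundary level where the handles are attached, so that the slide neither crosses the \(3\)-handle attaching region nor alters the fixed marking of \(Y\). I would address this by performing all slides in the upper level \(Y\times\{1\}\) of the collar \(Y\times[0,1]\), identified with \(Y\) by the product structure, before any \(3\)-handle is attached. The final check is that the framings recorded by Proposition~\ref{prop:marked-attaching-slide} are the product framings on the nose, as (i) requires.
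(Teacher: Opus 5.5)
Your proposal is correct and follows the paper's proof essentially step for step. Both arguments start from the standard decomposition of Lemma~\ref{lem:standard-product-framed} and apply Proposition~\ref{prop:marked-attaching-slide}, note that slides and framed isotopies in the fixed $Y$ preserve the relative diffeomorphism type, attach the $x\times\{0\}$ and $y\times\{1/2\}$ handles first (the other attaching circles and framing annuli persist in the complement of their tubular neighborhoods), and transport the $3$- and $4$-handle attaching data. The ``main obstacle'' you flag, together with the exact product framings, is already supplied by Lemma~\ref{lem:controlled-chord-slide-lift} and Proposition~\ref{prop:marked-attaching-slide}, so nothing further is needed there.
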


\begin{proof}
Start with the relative handle decomposition of
Lemma~\ref{lem:standard-product-framed}.  Apply
Proposition~\ref{prop:marked-attaching-slide} to its framed attaching link.
Relative \(2\)-handle slides and framed isotopies inside the fixed \(Y\) do
not change the relative diffeomorphism type or the boundary marking.

The resulting attaching link is a disjoint union of the two distinguished
components and the remaining framed sublink.  Choose pairwise disjoint framed
tubular neighborhoods of all components.  Attach first the handles along
\(x\times\{0\}\) and \(y\times\{1/2\}\).  Every other attaching circle and
its framing annulus lies in the complement of those two tubular neighborhoods,
so it persists canonically in the complement portion of the new outgoing
boundary.  Attach the remaining relative \(2\)-handles there with their
original framings.  Gluing the disjoint handle neighborhoods in this order or
simultaneously gives canonically diffeomorphic relative handlebodies rel the
incoming boundary; no additional handle slide or boundary reparametrization
is involved.  Finally attach the relative \(3\)- and \(4\)-handles with their
attaching data transported through the preceding handle-slide identification.
\end{proof}

\begin{lemma}[globalization of the local move]
\label{lem:local-sector-globalization}
Let
\[
P\cong\Sigma_{1,2}\subset\Sigma_g,
\qquad
\partial P=a\cup(-a'),
\]
and let \(x,y\subset P\) be oriented embedded simple closed curves meeting
transversely in exactly one point \(q\), with positive local intersection
sign.  Suppose, in the sense of
Convention~\ref{conv:local-boundary-word}, that
\(
a'=[x,y]a.
\)  After applying
Proposition~\ref{prop:product-framed-chart}, the local commutator
diffeomorphism extends to a global diffeomorphism
\[
A_{a'}\longrightarrow A_a
\]
which is rel \(\partial M=\Sigma_g\times S^1_\beta\) and carries
\[
(\alpha_{a'},\beta,\delta_{a'})
\longmapsto
(\alpha_a,\beta,\delta_a).
\]
\end{lemma}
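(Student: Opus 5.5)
The plan is to embed the local model \(W_{\mathrm{loc}}\) of Theorem~\ref{thm:local-commutator} into \(M\) using the relative handle chart of Proposition~\ref{prop:product-framed-chart}. Then we extend the local diffeomorphism of drilled exteriors by the identity over the remaining part of \(M\).

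By Proposition~\ref{prop:product-framed-chart}, \(M\) is built from a collar \(\Sigma_g\times S^1_\beta\times[0,1]\) of the fixed boundary. The first two relative \(2\)-handles \(h_x,h_y\) are attached exactly along \(x\times\{0\}\) and \(y\times\{1/2\}\) with product framings. Every other \(2\)-handle, and the \(3\)- and \(4\)-handles, are attached afterwards. Restrict the collar to \(P\times S^1_\beta\times[0,1]\) and add \(h_x\cup h_y\). This gives a codimension-zero embedding
\[
W_{\mathrm{loc}}=P\times S^1_\beta\times[0,1]\cup h_x\cup h_y\hookrightarrow M .
\]
On the local outer boundary it restricts to the identity onto \(P\times S^1_\beta\subset\partial M\). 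The attaching data agree literally with those in the definition of \(W_{\mathrm{loc}}\), because the charts are product-framed at the same \(\beta\)-levels.

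The rim tori \(R_a\) and \(R_{a'}\) are collar pushoffs at level \(r_0\). Using the collar enlargement \(P^+\) of Convention~\ref{conv:local-boundary-word}, they lie in the image of \(W_{\mathrm{loc}}\) (or of its slight enlargement). The drilled exteriors \(A_a,A_{a'}\) are independent, up to diffeomorphism rel \(\partial M\) preserving the ordered internal basis, of the collar depth and of small changes of product chart. This follows from the same tubular-neighborhood uniqueness used in Section~\ref{sec:preliminaries}.

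Theorem~\ref{thm:local-commutator} supplies a diffeomorphism \(W_{\mathrm{loc}}\setminus\nu R_{a'}\to W_{\mathrm{loc}}\setminus\nu R_a\) with these properties:
\begin{itemize}
\item it is compactly supported in the interior away from the outer boundary;
\item more precisely, it is the restriction of the time-one map \(\Phi_1\) of the ambient isotopy in Lemmas~\ref{lem:product-trace-isotopy} and~\ref{lem:product-trace-basis}, supported in a small regular neighborhood of the trace \(\mathcal T\);
\item it carries \((\alpha_{a'},\beta,\delta_{a'})\) to \((\alpha_a,\beta,\delta_a)\).
\end{itemize}

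The key point is that the support of \(\Phi_t\) lies in the interior of \(W_{\mathrm{loc}}\) and avoids its frontier in \(M\). That frontier consists of two pieces:
\begin{itemize}
\item the part of \(\partial(P\times S^1_\beta\times[0,1])\) over \(\partial P\times S^1_\beta\times[0,1]\), together with the top level outside the handle attaching regions;
\item the belt regions of \(h_x,h_y\).
\end{itemize}
Extending \(\Phi_t\) by the identity on \(M\setminus W_{\mathrm{loc}}\) then gives a global ambient isotopy of \(M\), fixed near \(\partial M\), carrying the framed tubular neighborhood of \(R_{a'}\) onto that of \(R_a\). Its time-one map restricts to the desired diffeomorphism \(A_{a'}\to A_a\). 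It is the identity on \(\partial M=\Sigma_g\times S^1_\beta\) and induces the stated basis map, since the basis statement concerns only the framed tubular coordinates, which are carried by \(\Phi_1\circ e_0=e_1\).

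The main obstacle is verifying that \(\mathcal T\) and its regular neighborhood avoid the frontier. The trace from Lemma~\ref{lem:product-trace-isotopy} is built from the post-surgery top boundary \(A_1\times S^1_\beta\), pushed inward, and this top boundary touches exactly the part of \(\partial W_{\mathrm{loc}}\) where later handles of \(M\) may be attached. My plan is to push \(A_1\times S^1_\beta\) strictly into the interior of \(W_{\mathrm{loc}}\) using an inward collar, as that lemma already does. The ramps \(E_a,E_{a'}\) lie over the collars \(C_a,C_{a'}\subset\operatorname{int}P^+\) at levels in \([r_0,1)\), so they stay away from \(\partial P^+\times S^1_\beta\times[0,1]\).

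Compactness then gives a positive distance from \(\mathcal T\) to the frontier, and we choose the regular neighborhood and the cutoff in the isotopy extension inside that distance. It also helps that every other handle attaches along components projecting outside \(Q^+\). This is an extra safety margin only: even where later handles attach over \(P\setminus Q^+\) at the top level, the interior push keeps the support disjoint from them.
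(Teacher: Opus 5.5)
Your proposal is correct and follows essentially the paper's argument. You embed \(W_{\mathrm{loc}}\) via the chart of Proposition~\ref{prop:product-framed-chart}, keep the trace \(\mathcal T\) and its regular neighborhood off every region where later handles attach and off a collar of \(\partial M\), and extend the isotopy-extension map by the identity. The difference is only bookkeeping: the paper works in the intermediate stage \(X_2=Y\times[0,1]\cup h_x\cup h_y\), whose boundary is just \(Y\times\{0\}\) and the outgoing boundary, so the vertical sides \(\partial P\times S^1_\beta\times[0,1]\) never appear as frontier. Working with the frontier of \(W_{\mathrm{loc}}\) in \(M\) instead forces you to use the collar pushoff/enlargement \(P^+\) to keep \(R_a\), \(R_{a'}\) and the ramps off those sides, which you correctly do.
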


\begin{proof}
Let \(X_2\) be the relative handle stage obtained from \(Y\times[0,1]\) by
attaching only the first two handles in
Proposition~\ref{prop:product-framed-chart}, namely the product-framed handles
\(h_x,h_y\) along \(x\times\{0\}\) and \(y\times\{1/2\}\).  The product
inclusion of \(P\times S^1_\beta\times[0,1]\), together with these two handle
neighborhoods, identifies the local commutator model
\(W_{\mathrm{loc}}\) with a submanifold of \(X_2\).

Choose \(R_{a'}\) and \(R_a\) at the prescribed interior collar level.  In
the construction of Lemma~\ref{lem:product-trace-isotopy}, push the
post-surgery copy of \(A\times S^1_\beta\), its product ramps, and a small
regular neighborhood of the resulting trace away from collars of both the
incoming boundary \(Y\times\{0\}\) and the entire outgoing boundary of
\(X_2\).  Use the framed tubular-neighborhood embeddings constructed in
Lemma~\ref{lem:product-trace-basis}.  Isotopy extension for those embeddings
\cite[Chapter~8, Theorem~1.3]{Hirsch1976} then gives a diffeomorphism
\[
X_2\setminus\nu R_{a'}
\longrightarrow
X_2\setminus\nu R_a
\]
which is the identity on collars of both boundary components and carries
\[
(\alpha_{a'},\beta,\delta_{a'})
\longmapsto
(\alpha_a,\beta,\delta_a)
\]
exactly.

Every remaining relative \(2\)-handle attaching circle and framing annulus
lies in the outgoing boundary of \(X_2\).  Since the diffeomorphism is
literally the identity on a collar of that entire boundary, extend it by the
identity over each remaining relative \(2\)-handle.  The extended map is again
the identity near the outgoing boundary of the complete relative
\(2\)-handle stage, and therefore extends by the identity over the relative
\(3\)- and \(4\)-handles.  It is rel the original incoming boundary
\(Y=\partial M\).  Removing the chosen rim-torus neighborhoods gives the
claimed diffeomorphism \(A_{a'}\to A_a\).  No spatial separation from the
later attaching circles is required.
\end{proof}

\section{The marked homology-relative rim lemma}\label{sec:homology-relative}

We now globalize the local commutator move to rim tori determined by
homologous curves.

\begin{proposition}[marked homology-relative rim lemma]\label{prop:homology-relative}
Let \(g\ge3\).  If \(a,a'\subset\Sigma_g\) are oriented nonseparating curves
with
\(
[a]=[a']\in H_1(\Sigma_g;\mathbb Z),
\)
then
\[
A_{a'}\cong A_a
\quad\mathrm{rel}\ \partial M,
\]
and the diffeomorphism carries
\[
(\alpha_{a'},\beta,\delta_{a'})
\mapsto
(\alpha_a,\beta,\delta_a).
\]
\end{proposition}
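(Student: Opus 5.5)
The plan is to reduce the proposition to a finite chain of local commutator moves. Each move is supplied by Lemma~\ref{lem:local-sector-globalization}, and the chain itself comes from the Hatcher--Margalit connectivity theorem announced in the introduction.

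Fix the primitive class \(\xi=[a]=[a']\in H_1(\Sigma_g;\mathbb Z)\). Consider the graph \(\mathcal C_\xi\) whose vertices are isotopy classes of oriented nonseparating curves representing \(\xi\). Two vertices \(c,c'\) are joined by an edge when they are disjoint and together cobound an embedded subsurface \(P\cong\Sigma_{1,2}\) with \(\partial P=c\cup(-c')\). For \(g\ge3\), Hatcher--Margalit show that this graph is connected. I will use this as the combinatorial input, and \(g\ge3\) is exactly where it is needed. This produces a sequence \(a'=c_0,c_1,\dots,c_n=a\) of oriented nonseparating curves in the class \(\xi\), with consecutive curves cobounding genus-one, two-holed subsurfaces \(P_k\). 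Isotopic curves give identical drilled exteriors up to a product isotopy of the collar. Such an isotopy lifts via \(H_t\times\mathrm{id}_{S^1_\beta}\) and fixes the marked basis \((\alpha,\beta,\delta)\), so each vertex determines \(A_{c}\) with its marked data up to the required equivalence. Replacing the collar isotopy by one that is stationary on \(\partial M\) costs nothing, because the rim tori sit at an interior collar level.

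Next I treat a single edge \(c_{k}\) to \(c_{k+1}\). Inside \(P_k\), choose oriented simple closed curves \(x,y\) meeting once positively, with a regular neighborhood \(Q\) such that \(\overline{P_k\setminus Q}\) is a pair of pants. Since \(\partial Q\) is separating and bounds the genus-one piece, the pair-of-pants relation reads \(c_{k}=[x,y]^{\pm1}c_{k+1}\) for suitable basepoint paths. I choose the orientations of \(x,y\) (swapping them if necessary, which inverts the commutator and preserves positivity after reorienting one curve) so that the relation has exactly the form of Convention~\ref{conv:local-boundary-word}, with \(a\) replaced by \(c_{k+1}\) and \(a'\) by \(c_k\). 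This orientation bookkeeping is the step I expect to require the most care. The boundary orientation \(\partial P=a\cup(-a')\) must match the one induced by \(P_k\), and if it does not, I will interchange the roles of the two ends and invert the resulting diffeomorphism at the end. Proposition~\ref{prop:product-framed-chart} then puts product-framed handles along \(x\times\{0\}\) and \(y\times\{1/2\}\). Lemma~\ref{lem:local-sector-globalization} then gives a diffeomorphism \(A_{c_k}\to A_{c_{k+1}}\) rel \(\partial M\) that carries \((\alpha_{c_k},\beta,\delta_{c_k})\) to \((\alpha_{c_{k+1}},\beta,\delta_{c_{k+1}})\).

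Finally, I compose these \(n\) diffeomorphisms. Each one is the identity on \(\partial M\) and sends the ordered internal basis to the ordered internal basis exactly, so the composite \(A_{a'}\to A_a\) has the same two properties. This proves the proposition. If the Hatcher--Margalit edges are formulated more loosely, for example as disjoint homologous curves in general rather than bounding a \(\Sigma_{1,2}\), then I will first refine each edge. Two disjoint homologous nonseparating curves cobound a subsurface \(\Sigma_{h,2}\). I will cut this subsurface into \(h\) consecutive \(\Sigma_{1,2}\) pieces by intermediate homologous curves, and so reduce to the genus-one case above. The case \(h=0\) is just an isotopy.
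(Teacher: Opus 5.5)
Your proposal is correct and is essentially the paper's proof. The paper also uses the connectivity of Putman's complex $C_x(\Sigma_g)$ via Hatcher--Margalit, whose edges are only disjointness; so your fallback of cutting each $\Sigma_{h,2}$ between disjoint homologous curves into $h$ nested $\Sigma_{1,2}$ pieces is exactly the step the paper carries out. It then applies Lemma~\ref{lem:local-sector-globalization} to each genus-one piece, uses an annular product isotopy for isotopic neighbours, and composes the resulting marked rel~$\partial M$ diffeomorphisms.

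One small correction concerns orientations. No reordering or reorientation of a positive pair $x,y$ inverts $[x,y]$ up to conjugacy, because the boundary orientation of $Q$ is intrinsic. So the orientation matching must come from your second remedy: choose which end plays the role of $a$ (equivalently, the paper picks the complementary component with induced boundary $a_i\sqcup(-a_{i+1})$), and invert the diffeomorphism if needed. In any case, the local move only uses the fact that capping $\partial Q$ turns $P$ into an annulus from $a$ to $a'$.
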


\begin{proof}
Let
\(
x=[a]\in H_1(\Sigma_g;\mathbb Z).
\)
By \cite[Theorem~2]{HatcherMargalit2012}, attributed there to Putman, the
complex \(C_x(\Sigma_g)\) is connected for \(g\ge3\).  Its vertices are
curves that can be oriented to represent the fixed nonzero primitive class
\(x\); this orientation is unique.  Hence there is a sequence
\[
a=a_0,a_1,\dots,a_N=a'
\]
of oriented curves representing \(x\), with each adjacent pair disjoint.

Fix \(i\).  If \(a_i\) and \(a_{i+1}\) are isotopic, choose the annulus whose
oriented boundary is \(a_i\sqcup(-a_{i+1})\).  The product isotopy through
this annulus gives
\[
\Psi_i:A_{a_{i+1}}\longrightarrow A_{a_i}
\quad\mathrm{rel}\ \partial M
\]
and preserves \((\alpha,\beta,\delta)\) in the product tubular coordinates.

Suppose instead that the two curves are nonisotopic.  They form a bounding
pair.  Choose the connected component \(P_i\) whose induced oriented boundary
is
\[
\partial P_i=a_i\sqcup(-a_{i+1}).
\]
Then \(P_i\cong\Sigma_{r_i,2}\) with \(r_i>0\).  Choose nested oriented
curves
\[
c_{i,0},c_{i,1},\dots,c_{i,r_i}
\]
with \(c_{i,0}=a_i\), \(c_{i,r_i}=a_{i+1}\), such that
\(c_{i,j-1}\sqcup(-c_{i,j})\) bounds a genus-one subsurface
\(P_{i,j}\cong\Sigma_{1,2}\).  For each \(P_{i,j}\), choose a basepoint and the paths to the curves as in
Convention~\ref{conv:local-boundary-word}.  Choose oriented embedded simple
closed curves \(x_{i,j},y_{i,j}\subset P_{i,j}\) meeting transversely in
exactly one point with positive local intersection sign.  In particular,
\[
x_{i,j}\cdot y_{i,j}=1.
\]
With the chosen paths and boundary orientations, the literal based
relation is
\[
c_{i,j}=[x_{i,j},y_{i,j}]\,c_{i,j-1}.
\]
This is obtained by taking \(x_{i,j},y_{i,j}\) as the standard oriented
curves in the one-holed-torus part and using the complementary pair of pants
to the two boundary curves.

Lemma~\ref{lem:local-sector-globalization} gives, for every \(j\), a
diffeomorphism
\[
\Phi_{i,j}:A_{c_{i,j}}\longrightarrow A_{c_{i,j-1}}
\quad\mathrm{rel}\ \partial M
\]
carrying
\[
(\alpha_{c_{i,j}},\beta,\delta_{c_{i,j}})
\longmapsto
(\alpha_{c_{i,j-1}},\beta,\delta_{c_{i,j-1}}).
\]
Define
\[
\Psi_i=\Phi_{i,1}\circ\Phi_{i,2}\circ\cdots\circ\Phi_{i,r_i}.
\]
The rightmost map is applied first, so
\[
A_{c_{i,r_i}}
\xrightarrow{\Phi_{i,r_i}}
A_{c_{i,r_i-1}}
\longrightarrow\cdots\longrightarrow
A_{c_{i,1}}
\xrightarrow{\Phi_{i,1}}
A_{c_{i,0}}.
\]
Thus \(\Psi_i:A_{a_{i+1}}\to A_{a_i}\) is rel \(\partial M\) and preserves
the ordered internal basis exactly.

Finally set
\[
\Psi=\Psi_0\circ\Psi_1\circ\cdots\circ\Psi_{N-1}.
\]
Again the rightmost map is applied first, giving a map
\(A_{a'}\to A_a\).  Every elementary map is rel the outer boundary and
carries the three fixed tubular coordinates literally to the corresponding
three coordinates.  Their composites therefore introduce no meridional shear
and satisfy
\[
(\alpha_{a'},\beta,\delta_{a'})
\longmapsto
(\alpha_a,\beta,\delta_a).
\]
\end{proof}

\begin{remark}
The above proof uses the connectivity theorem for \(g\ge3\). The genus 2 case
requires a different complex and is not included here.
\end{remark}

\begin{lemma}[orientation reversal of the internal rim basis]
\label{lem:rim-basis-orientation-reversal}
Let \(b\subset\Sigma_g\) be an oriented nonseparating curve, and let
\(\bar b\) denote the same embedded curve with the opposite orientation.
Then
\(
A_{\bar b}=A_b
\)
as drilled exteriors, and the internal boundary bases satisfy
\[
\alpha_{\bar b}=\alpha_b^{-1},
\qquad
\beta_{\bar b}=\beta_b=\beta,
\qquad
\delta_{\bar b}=\delta_b^{-1}.
\]
\end{lemma}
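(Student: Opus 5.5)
The approach is to unwind the definitions and check how each of the three basis vectors depends on the orientation of \(b\). The drilled exterior \(A_b=M\setminus\nu R_b\) depends only on the unoriented torus \(R_b=b\times S^1_\beta\). As a subset, this torus is unchanged when the orientation of \(b\) is reversed, and we use the same pushed-in collar level and the same tubular neighborhood. So \(A_{\bar b}=A_b\) literally, with the same internal boundary \(Q_b\). It remains to compare the two ordered bases of \(\pi_1(Q_b)\cong\mathbb Z^3\).

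First, \(\alpha\) is by definition the oriented \(b\)-direction, so \(\alpha_{\bar b}=\alpha_b^{-1}\) immediately. Second, \(\beta=\mu_\Sigma\) is the \(S^1_\beta\)-factor of \(\partial M=\Sigma_g\times S^1_\beta\). This factor is fixed by the outer boundary marking and does not involve \(b\), so \(\beta_{\bar b}=\beta_b\).

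The only real point is \(\delta\), the positive meridian of \(R_b\). Its orientation is determined by the orientation of \(R_b\) together with the ambient orientation of \(M\). Concretely, a normal disk \(D^2\) is oriented so that (\(T R_b\), normal disk) agrees with the orientation of \(M\), and \(\delta\) is its oriented boundary. The rim torus is oriented by the ordered pair \((\alpha,\beta)\), as in the proof of Lemma~\ref{lem:product-trace-basis}. Reversing \(\alpha\) while keeping \(\beta\) reverses the orientation of \(R_{\bar b}\). The ambient orientation is fixed, so the induced orientation on the normal disk reverses, and hence \(\delta_{\bar b}=\delta_b^{-1}\).

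As a consistency check, this agrees with Lemma~\ref{lem:drilled-exterior}. There the outer boundary map sends a loop \(c\) to \(\delta_b^{b\cdot c}\). Replacing \(b\) by \(\bar b\) flips the sign of \(b\cdot c\), which is exactly compensated by \(\delta_{\bar b}=\delta_b^{-1}\). So the image of \(c\), which does not depend on any choice, is unchanged.

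The main obstacle, though a mild one, is making the orientation convention for \(\delta\) explicit. I would fix it as the boundary of the normal disk oriented by \((TR,\text{normal})=T M\), and then check that Lemma~\ref{lem:drilled-exterior} uses the same convention. That lemma defines \(\delta\) by a local intersection sign involving the orientation of \(a\), which confirms the sign flip.
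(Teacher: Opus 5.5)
Your proposal is correct and follows the paper's proof essentially step for step. The drilled exterior depends only on the unoriented torus, \(\alpha\) reverses by definition, and \(\beta=\mu_\Sigma\) is independent of \(b\); \(\delta\) reverses because its normal-disk orientation comes from the orientation of \(R_b\) given by \((\alpha,\beta)\) together with the fixed ambient orientation. The paper also uses your check via Lemma~\ref{lem:drilled-exterior}, but as an equivalent second derivation: it chooses \(c\) with \(b\cdot c=1\) to read off \(\delta_{\bar b}=\delta_b^{-1}\) directly.
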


\begin{proof}
The drilled exterior depends on the unoriented embedded rim torus
\(
R_b=b\times S^1_\beta\subset M,
\)
so \(A_{\bar b}=A_b\) as manifolds rel outer boundary.  The generator
\(\alpha_b\) is the \(b\)-direction on the rim torus, hence reversing the
orientation of \(b\) gives
\(
\alpha_{\bar b}=\alpha_b^{-1}.
\)
The generator \(\beta\) is the oriented surface meridian \(\mu_\Sigma\), so it
is independent of the orientation of \(b\).

It remains to check \(\delta\).  The orientation of the rim torus
\(
R_b=b\times S^1_\beta
\)
is determined by the ordered tangent directions \((\alpha_b,\beta)\).  Reversing
the orientation of \(b\), while keeping \(\beta\) fixed, reverses the orientation
of \(R_b\).  The rim-torus meridian \(\delta_b\) is the positively oriented
boundary of the normal disk for which the oriented tangent plane of \(R_b\),
followed by the oriented normal \(2\)-plane, gives the orientation of \(M\).  Hence
reversing the orientation of \(R_b\) reverses the positive orientation of the
normal disk, and therefore
\(
\delta_{\bar b}=\delta_b^{-1}.
\)

Equivalently, this is forced by Lemma~\ref{lem:drilled-exterior}.  For every
oriented loop \(c\subset\Sigma_g\), the outer boundary map is
\(
c\longmapsto \delta_b^{\,b\cdot c}.
\)
For the opposite orientation \(\bar b\), the same geometric drilling gives the
same boundary element, but
\(
\bar b\cdot c=-(b\cdot c).
\)
Thus
\[
\delta_b^{\,b\cdot c}
=
\delta_{\bar b}^{\,\bar b\cdot c}
=
\delta_{\bar b}^{-(b\cdot c)}
\]
for all \(c\).  Choosing \(c\) with \(b\cdot c=1\) gives
\(
\delta_{\bar b}=\delta_b^{-1}.
\)
\end{proof}

\begin{proposition}[signed marked homology-relative rim lemma]
\label{prop:signed-homology-relative}
Let \(g\ge3\).  Let \(a,a'\subset\Sigma_g\) be oriented nonseparating curves
with
\(
[a']=-[a]\in H_1(\Sigma_g;\mathbb Z).
\)
Then
\[
A_{a'}\cong A_a
\quad\mathrm{rel}\ \partial M,
\]
and the diffeomorphism carries
\[
(\alpha_{a'},\beta,\delta_{a'})
\mapsto
(\alpha_a^{-1},\beta,\delta_a^{-1}).
\]
\end{proposition}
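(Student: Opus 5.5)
The plan is to reduce the signed statement to the unsigned one, Proposition~\ref{prop:homology-relative}, by reversing the orientation of \(a\), and then to convert the bases with Lemma~\ref{lem:rim-basis-orientation-reversal}. Let \(\bar a\) denote the curve \(a\) with its orientation reversed. Then \(\bar a\) is an oriented nonseparating curve with
\[
[\bar a]=-[a]=[a']\in H_1(\Sigma_g;\mathbb Z).
\]
Since \(g\ge3\), Proposition~\ref{prop:homology-relative} applies to the pair \(\bar a,a'\). It gives a diffeomorphism \(\Psi:A_{a'}\to A_{\bar a}\) rel \(\partial M\) carrying
\[
(\alpha_{a'},\beta,\delta_{a'})\longmapsto(\alpha_{\bar a},\beta,\delta_{\bar a}).
\]

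Next, Lemma~\ref{lem:rim-basis-orientation-reversal} says that \(A_{\bar a}=A_a\) literally as drilled exteriors rel the outer boundary. Under this identification the internal bases are related by
\[
\alpha_{\bar a}=\alpha_a^{-1},\qquad \beta_{\bar a}=\beta,\qquad \delta_{\bar a}=\delta_a^{-1}.
\]
Composing \(\Psi\) with this identity identification therefore gives \(A_{a'}\cong A_a\) rel \(\partial M\), with
\[
(\alpha_{a'},\beta,\delta_{a'})\mapsto(\alpha_a^{-1},\beta,\delta_a^{-1}),
\]
which is the statement.

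The only point that needs care is that the identification \(A_{\bar a}=A_a\) uses the same product tubular neighborhood of the unoriented rim torus \(R_a\). Only the ordered coordinate labels change, so no shear is introduced beyond the orientation flips recorded in the lemma. I expect no substantive obstacle here. The work is already done in Proposition~\ref{prop:homology-relative}, which requires \([a']=[\bar a]\) and the \(g\ge3\) connectivity of \(C_{[a']}(\Sigma_g)\). The sign bookkeeping for \(\delta\) is handled by Lemma~\ref{lem:rim-basis-orientation-reversal}, and it is consistent with the outer boundary formula \(c\mapsto\delta^{a\cdot c}\) of Lemma~\ref{lem:drilled-exterior}.
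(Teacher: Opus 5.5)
Your proposal is correct and is essentially the paper's own proof. The only difference is which curve gets reversed: the paper reverses \(a'\), applies Proposition~\ref{prop:homology-relative} to \(\overline{a'}\) and \(a\), and then uses Lemma~\ref{lem:rim-basis-orientation-reversal} to rewrite the source basis, whereas you reverse \(a\) and rewrite the target basis, which is symmetric bookkeeping and gives the same conclusion.
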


\begin{proof}
Let \(\overline{a'}\) denote the same embedded curve as \(a'\), with the
opposite orientation.  Since
\(
[a']=-[a],
\)
we have
\(
[\overline{a'}]=[a].
\)
By Proposition~\ref{prop:homology-relative}, there is a diffeomorphism
\[
\Psi:A_{\overline{a'}}\longrightarrow A_a
\quad\mathrm{rel}\ \partial M
\]
such that
\[
\Psi_*(\alpha_{\overline{a'}})=\alpha_a,
\qquad
\Psi_*(\beta)=\beta,
\qquad
\Psi_*(\delta_{\overline{a'}})=\delta_a.
\]
Since
\(
A_{\overline{a'}}=A_{a'}
\)
as drilled exteriors, Lemma~\ref{lem:rim-basis-orientation-reversal} gives
\[
\alpha_{\overline{a'}}=\alpha_{a'}^{-1},
\qquad
\beta_{\overline{a'}}=\beta,
\qquad
\delta_{\overline{a'}}=\delta_{a'}^{-1}.
\]
Therefore
\[
\Psi_*(\alpha_{a'}^{-1})=\alpha_a,
\qquad
\Psi_*(\beta)=\beta,
\qquad
\Psi_*(\delta_{a'}^{-1})=\delta_a.
\]
Equivalently,
\[
\Psi_*(\alpha_{a'})=\alpha_a^{-1},
\qquad
\Psi_*(\beta)=\beta,
\qquad
\Psi_*(\delta_{a'})=\delta_a^{-1}.
\]
This proves the proposition.  There is no additional shear term, because
Proposition~\ref{prop:homology-relative} gives an exact basis-preserving map
for the same oriented homology class, and
Lemma~\ref{lem:rim-basis-orientation-reversal} only changes the orientation
labels on the same three geometric circles.
\end{proof}

\section{The lower bound}\label{sec:lower}
We now assemble the lower bound.
\begin{lemma}\label{lem:normal-product}
Let \(f\in\Stab(q_0)\), and let
\(
F:(S^4,\Sigma_g^0)\to(S^4,\Sigma_g^0)
\)
be an ambient extension of \(f\). Then \(F\) may be isotoped through pair
diffeomorphisms so that it is product-normal near \(\Sigma_g^0\).
\end{lemma}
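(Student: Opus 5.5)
The plan is to first normalize \(F\) to a bundle map near the surface, then show that the resulting normal twisting is homologically trivial, and finally untwist it by a compactly supported isotopy. Throughout, "product-normal" is taken to mean the following: with respect to the fixed trivialization \(\nu\Sigma_g^0\cong\Sigma_g\times D^2\), whose boundary is the marked \(\partial M=\Sigma_g\times S^1_\beta\), the map \(F\) has the form \(F|_{\Sigma}\times\operatorname{id}_{D^2}\) near \(\Sigma_g^0\).

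\emph{Step 1: reduce to a bundle map.} As in the proof of Proposition~\ref{prop:upper}, uniqueness of tubular neighborhoods and isotopy extension \cite[Chapter~4, Theorem~5.3 and Chapter~8, Theorem~1.3]{Hirsch1976} give an isotopy through pair diffeomorphisms, fixed on \(\Sigma_g^0\). After it, \(F\) is a linear bundle map near the surface:
\[
F(p,v)=(\phi(p),A(p)v),\qquad \phi=F|_{\Sigma},\quad A:\Sigma_g\to GL^+(2,\mathbb R).
\]
The matrix \(A(p)\) lies in \(GL^+\) because \(F\) preserves the ambient orientation and the orientation of \(\Sigma\), and hence preserves the normal orientation. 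Deformation-retracting \(GL^+(2,\mathbb R)\) onto \(SO(2)\), again by an isotopy supported near the surface, we may assume \(A:\Sigma_g\to SO(2)\cong S^1\). The remaining invariant is the class
\[
u=[A]\in H^1(\Sigma_g;\mathbb Z),
\]
and on the boundary \(\partial M\) it appears as
\[
F_*(c)=\phi_*(c)\,\beta^{u(c)}.
\]

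\emph{Step 2: show \(u=0\).} The idea is to use the complement. By the first paragraph of the proof of Lemma~\ref{lem:drilled-exterior}, \(H_1(M;\mathbb Z)\cong\mathbb Z\langle\beta\rangle\), and every product-pushoff loop \(c\times\{\theta\}\subset\partial M\) is zero in \(H_1(M)\), since the surface subgroup maps trivially. Now \(F\) restricts to a diffeomorphism of \(M\), so it carries the class of \(c\times\{\theta\}\) to a class that must also vanish in \(H_1(M)\). That image is \(\phi_*(c)+u(c)\beta\), and \(\phi_*(c)\) is again a product-pushoff loop, hence zero in \(H_1(M)\). Therefore \(u(c)[\beta]=0\), and since \(\beta\) has infinite order, \(u(c)=0\) for every \(c\).

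\emph{Step 3: untwist.} Because \(u=0\), the map \(A:\Sigma_g\to S^1\) is nullhomotopic; let \(A_t\) be a homotopy from \(A\) to the constant identity. Composing \(F\) with the diffeomorphisms
\[
(p,v)\mapsto(p,\rho(|v|)\cdot A_t^{-1}(\phi^{-1}(p))\,v),
\]
where \(\rho\) is a radial cutoff equal to \(1\) near the zero section and \(0\) near \(\partial\nu\), gives an isotopy through pair diffeomorphisms. It is supported in \(\nu\Sigma_g^0\) and fixed on \(\Sigma\), so it leaves the induced mapping class \(f\) unchanged. At \(t=1\), \(F\) equals \(\phi\times\operatorname{id}_{D^2}\) on a smaller tubular neighborhood, which can be rescaled to the standard one.

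The main obstacle is Step 2. One must check that the fixed product trivialization of \(\partial M\) is exactly the one whose surface-direction loops are nullhomologous in \(M\); this is the Seifert-type framing coming from the handlebodies \(H_\pm\) in Lemma~\ref{lem:standard-product-framed}. One must also check that the twisting of the bundle map contributes precisely \(u(c)\beta\), with consistent signs. If this identification of framings is off, the argument fails, so I would verify it first against Lemma~\ref{lem:drilled-exterior}.
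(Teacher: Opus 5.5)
Your proposal follows the paper's proof essentially step for step. Both arguments use tubular-neighborhood uniqueness to get a bundle map over \(\phi\), retract \(GL^+(2,\mathbb R)\) onto \(SO(2)\), kill the twisting class because surface loops are trivial while \(\beta\) has infinite order in \(\pi_1(M)\cong H_1(M)\cong\mathbb Z\langle\beta\rangle\), and then untwist along a null-homotopy. The framing fact you flag as the main obstacle is exactly what the paper takes from the first paragraph of the proof of Lemma~\ref{lem:drilled-exterior}.

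The one real slip is the explicit formula in Step 3. Multiplying \(v\) by the scalar \(\rho(|v|)\) collapses the outer part of each fiber, so the map is not a diffeomorphism. Also, since \(A_1=1\), your correction is trivial at \(t=1\), so the time parameter runs backwards.

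To fix it, cut off the homotopy parameter rather than the vector, e.g.\ \((q,w)\mapsto\bigl(q,\,A_{t\rho(|w|)}(\phi^{-1}(q))\,A(\phi^{-1}(q))^{-1}w\bigr)\). This is a radius-dependent fiberwise rotation: it is the identity at \(t=0\) and near \(\partial\nu\), and it undoes \(A\) near the zero section at \(t=1\). Alternatively, as the paper does, apply relative isotopy extension to the bundle isotopy \((p,v)\mapsto(\phi(p),A_t(p)v)\).
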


\begin{proof}
Choose the fixed tubular coordinates
\[
\nu\Sigma_g^0\cong\Sigma_g\times D^2,
\]
and write \(f\) also for the restriction of \(F\) to the zero section.  The
derivative of \(F\) along the zero section induces an oriented isomorphism of
the normal bundle over \(f\).  The restriction of \(F\) to a sufficiently
small tubular neighborhood and the bundle map determined by this normal
isomorphism are tubular-neighborhood embeddings with the same restriction and
the same normal derivative on the zero section.  By uniqueness of tubular
neighborhoods, after shrinking the disk fibers they are isotopic through
such embeddings, fixed on the zero section.  Isotopy extension, supported in
the original tubular neighborhood, therefore changes \(F\) through pair
diffeomorphisms as in \cite[Chapter~4, Theorem~5.3 and Chapter~8, Theorem~1.3]{Hirsch1976}, so that near the zero section it is a bundle map over \(f\).

In the fixed oriented trivialization, its fiber maps form a smooth map
\[
\Sigma_g\longrightarrow GL^+(2,\mathbb R).
\]
The positivity follows because \(F\) preserves the orientations of \(S^4\)
and \(\Sigma_g^0\), and hence the orientation of the normal plane.  The
standard deformation of \(GL^+(2,\mathbb R)\) onto \(SO(2)\) gives a
deformation through orientation-preserving fiber maps.  On a possibly smaller
tubular neighborhood these are bundle embeddings fixed on the zero section,
and isotopy extension again realizes the deformation by a supported isotopy
through pair diffeomorphisms.  We may therefore assume that
\[
(p,z)\longmapsto(f(p),r(p)z)
\]
near the surface, for a smooth map \(r:\Sigma_g\to S^1\).  After shrinking
the disk fibers once more, this formula preserves the chosen tubular
neighborhood, so \(F\) restricts to a diffeomorphism of its exterior.

On the surface-normal circle bundle
\[
\partial M=\Sigma_g\times S^1_\beta,
\]
the same formula is
\[
(p,\beta)\longmapsto(f(p),r(p)\beta).
\]
For a based loop \(c\subset\Sigma_g\), it follows that
\[
c\longmapsto f_*(c)\beta^{u(c)},
\qquad
u(c)=\deg(r|_c).
\]
Here the homomorphism \(u:H_1(\Sigma_g;\mathbb Z)\to\mathbb Z\) is the
homomorphism corresponding to the cohomology class of \(r\).  Since \(F\)
preserves the oriented normal plane, it sends the oriented meridian \(\beta\)
to itself.

Now use the standard exterior
\[
M=S^4\setminus\nu\Sigma_g^0,
\qquad
\pi_1(M)=\mathbb Z\langle\beta\rangle.
\]
Both \(c\) and \(f_*(c)\) map trivially under
\(\pi_1(\partial M)\to\pi_1(M)\).  Naturality of this boundary inclusion gives
\[
1
=
F_*(1)
=
\beta^{u(c)}
\quad\text{in }\pi_1(M)
\]
for every \(c\).  Since \(\beta\) has infinite order, \(u(c)=0\) for every
\(c\).  Hence
\[
[r]=0\in H^1(\Sigma_g;\mathbb Z).
\]

Choose a homotopy \(r_t\) from \(r\) to the constant map \(1\).  The maps
\[
(p,z)\longmapsto(f(p),r_t(p)z)
\]
form an isotopy of the restriction of \(F\) on the tubular neighborhood,
through bundle diffeomorphisms whose restriction to the zero section is
constantly \(f\).  Relative isotopy extension, supported in a slightly larger
tubular neighborhood and fixed on the zero section, extends it to an ambient
isotopy through pair diffeomorphisms.  At the end of this isotopy,
\[
(p,z)\longmapsto(f(p),z)
\]
on a possibly smaller tubular neighborhood of \(\Sigma_g^0\).  Thus \(F\)
is product-normal near \(\Sigma_g^0\), as required.
\end{proof}

\begin{proposition}[lower bound]
\label{prop:lower}
Let \(g\ge3\), and let \(J\subset S^3\) be a nontrivial knot.  Then
\[
\Stab_{\Mod(\Sigma_g)}(q_0)
\cap
\Stab_{\Mod(\Sigma_g)}(\Gamma_\mu(J)\cdot[a])
\subseteq
E(\Sigma_{g,a,J}).
\]
\end{proposition}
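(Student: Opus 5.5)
Let \(f\in\Stab(q_0)\) with \(f_*[a]=\varepsilon[a]\) for some \(\varepsilon\in\Gamma_\mu(J)\). The plan is to build a self-diffeomorphism of the rim-surgered pair out of three pieces: an extension over the unknotted exterior, a correction on the drilled exterior, and a knot-exterior symmetry \(h\). These are then glued along the internal torus \(Q_a\).

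\textbf{Step 1 (extension over the unknotted pair).} By Hirose's theorem, \(f\) extends to \(F:(S^4,\Sigma_g^0)\to(S^4,\Sigma_g^0)\). By Lemma~\ref{lem:normal-product}, we may isotope \(F\) so that it is product-normal near \(\Sigma_g^0\), i.e.\ \((p,z)\mapsto(f(p),z)\). Then \(F\) restricts to \(F_M:M\to M\), which is \(f\times\mathrm{id}_{S^1_\beta}\) on \(\partial M\) and also on a collar \(\partial M\times[0,1]\). Since the rim tori are pushed into this collar, \(F_M\) carries \(R_a=a\times S^1_\beta\) onto \(R_{f(a)}\) as product tubular neighborhoods. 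Hence it induces \(A_a\to A_{f(a)}\) with \((\alpha_a,\beta,\delta_a)\mapsto(\alpha_{f(a)},\beta,\delta_{f(a)})\), because \(f\) is orientation preserving on the surface and product in the collar, so no shear arises.

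\textbf{Step 2 (return to \(A_a\) rel boundary).} The curve \(f(a)\) is oriented nonseparating with \([f(a)]=\varepsilon[a]\). If \(\varepsilon=1\), Proposition~\ref{prop:homology-relative} gives \(\Psi:A_{f(a)}\to A_a\) rel \(\partial M\) preserving the ordered basis. If \(\varepsilon=-1\), Proposition~\ref{prop:signed-homology-relative} gives \(\Psi\) with \((\alpha,\beta,\delta)\mapsto(\alpha_a^{-1},\beta,\delta_a^{-1})\). The composite \(G=\Psi\circ F_M|:A_a\to A_a\) equals \(f\times\mathrm{id}\) on \(\partial M\) and acts on \(Q_a\) by \((\alpha,\beta,\delta)\mapsto(\alpha^\varepsilon,\beta,\delta^\varepsilon)\). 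I would isotope \(G\) near \(Q_a\) (using the exact tubular-coordinate control built into those propositions) so that on \(Q_a=S^1\times\partial E(J)\) it is literally the linear map \(s\mapsto s^\varepsilon\), \(\mu\mapsto\mu\), \(\lambda\mapsto\lambda^\varepsilon\).

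\textbf{Step 3 (glue in the knot piece).} Choose \(h:E(J)\to E(J)\) with \(h_*\mu_J=\mu_J\) and \(h_*\lambda_J=\lambda_J^\varepsilon\); it exists because \(\varepsilon\in\Gamma_\mu(J)\). As in the proof of Theorem~\ref{thm:peripheral-realization}, normalize \(h\) on a boundary collar to be exactly the linear torus map. Define \(H=r_\varepsilon\times h\) on \(S^1_s\times E(J)\), where \(r_\varepsilon\) is the identity or a reflection of \(S^1\). Under the gluing \(s\mapsto\alpha\), \(\mu_J\mapsto\beta\), \(\lambda_J\mapsto\delta\), the boundary maps of \(G\) and \(H\) agree, so \(G\cup H\) is a diffeomorphism of the complement of \(\Sigma_{g,a,J}\) restricting to \(f\times\mathrm{id}\) on the boundary. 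Extending by \(f\times\mathrm{id}_{D^2}\) over \(\nu\Sigma_{g,a,J}\) yields a pair diffeomorphism of \(S^4\) inducing \(f\) under the canonical marking, since the marking is the identity on \(\partial\nu\) coordinates.

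\textbf{Orientation.} The glued map preserves orientation of \(S^4\) because \(G\) does. When \(\varepsilon=-1\) and \(h\) is orientation reversing, \(r_{-1}\times h\) is orientation preserving, so the pieces are compatible. If instead \(h\) preserves orientation while \(\lambda\mapsto\lambda^{-1}\), then \(h|_\partial\) would be orientation reversing, which is impossible; so orientations match automatically. The analogous check for \(\varepsilon=1\) with orientation-reversing \(h\) is likewise impossible on the boundary torus, so in that case \(h\) preserves orientation.

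\textbf{Main obstacle.} The hard step is Step 2's exact boundary matching. One must verify that the composite of Hirose's extension with the homology-relative map acts on \(Q_a\) exactly as the linear matrix, with no \(\delta\mapsto\delta\beta^k\) or \(\delta\mapsto\delta\alpha^k\) shear, and then straighten it in a collar. Any shear that did appear would be killed by the homological argument of Proposition~\ref{prop:upper} (the \(\mu\)-exponent must vanish). Given the propositions as stated, however, it suffices to check that \(F_M\) is product on the collar, which Lemma~\ref{lem:normal-product} provides.
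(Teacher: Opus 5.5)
Your proposal is correct and is essentially the paper's proof. The paper factors the same map as $\Omega_\varepsilon\circ\Phi_f$ through the intermediate pair $(S^4,\Sigma_{g,f(a),J})$, whereas you first form $\Psi\circ F_M|_{A_a}$ on the drilled exterior and glue once with $(s,x)\mapsto(s^\varepsilon,h(x))$; this gives literally the same diffeomorphism, with the same orientation check via the determinant of $h|_{\partial E(J)}$. One caveat: your aside that a stray shear would be removed by the homological argument of Proposition~\ref{prop:upper} is not right in general. A shear $\delta\mapsto\delta\alpha^k$ is invisible in $H_1(A_a)$, and it cannot be absorbed on the $S^1\times E(J)$ side because $\lambda_J$ is null-homologous. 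So the exact basis control must come, as you ultimately take it, from Propositions~\ref{prop:homology-relative} and~\ref{prop:signed-homology-relative}.
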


\begin{proof}
Let
\[
f\in
\Stab_{\Mod(\Sigma_g)}(q_0)
\cap
\Stab_{\Mod(\Sigma_g)}(\Gamma_\mu(J)\cdot[a]).
\]
Since
\(
f_*(\Gamma_\mu(J)\cdot[a])
=
\Gamma_\mu(J)\cdot[a],
\)
we have
\(
[a]\in\Gamma_\mu(J)\cdot[a].
\)
Hence
\(
f_*[a]\in\Gamma_\mu(J)\cdot[a].
\)
Thus there is a sign
\(
\varepsilon\in\Gamma_\mu(J)
\)
such that
\(
f_*[a]=\varepsilon[a].
\)

By Hirose's theorem \cite{Hirose2002}, \(f\) extends over the unknotted pair
\(
(S^4,\Sigma_g^0).
\)
After applying Lemma~\ref{lem:normal-product}, choose such an extension
\[
\widehat F_f:(S^4,\Sigma_g^0)\to(S^4,\Sigma_g^0)
\]
which is product-normal near \(\Sigma_g^0\).  We write \(f(a)\) for the
oriented image of \(a\) under the surface restriction of \(\widehat F_f\).
On the surface-normal circle bundle
\[
\partial M=\Sigma_g\times S^1_\beta,
\]
the map \(\widehat F_f\) has the form
\[
(p,\beta)\longmapsto(f(p),\beta).
\]
Consequently, it sends
\(
R_a=a\times S^1_\beta
\)
to
\(
R_{f(a)}=f(a)\times S^1_\beta.
\)
Choose the product tubular neighborhood of \(R_{f(a)}\) to be the image of
the chosen product tubular neighborhood of \(R_a\).  Then
\[
\widehat F_f|_{A_a}:A_a\to A_{f(a)}
\]
has, on the internal boundary, the literal product-coordinate restriction
\[
(\alpha_a,\beta,\delta_a)
\longmapsto
(\alpha_{f(a)},\beta,\delta_{f(a)}).
\]

We now construct a pair diffeomorphism
\[
\Phi_f:
(S^4,\Sigma_{g,a,J})\to(S^4,\Sigma_{g,f(a),J}).
\]
On the drilled-exterior piece \(A_a\), use
\(\widehat F_f|_{A_a}\), and on the inserted piece
\(S^1_s\times E(J)\), use the identity.  On the common parameterized
\(3\)-torus, the two composites with the source and target rim-surgery
gluings are equal as maps: both are the product of
\[
s\longmapsto\alpha_{f(a)},
\qquad
\mu_J\longmapsto\beta,
\qquad
\lambda_J\longmapsto\delta_{f(a)}.
\]
Thus the two piecewise maps agree literally on the common boundary and glue
to a diffeomorphism of the complements.  On
\(\partial M=\Sigma_g\times S^1_\beta\), the glued map is
\[
(p,\beta)\longmapsto(f(p),\beta),
\]
so it extends over the surface normal disk bundle by
\[
(p,z)\longmapsto(f(p),z).
\]
The resulting orientation-preserving pair diffeomorphism is \(\Phi_f\), and
it induces exactly \(f\) under the canonical markings.

Since
\(
[f(a)]=\varepsilon[a],
\)
Proposition~\ref{prop:homology-relative} applies when \(\varepsilon=1\), and
Proposition~\ref{prop:signed-homology-relative} applies when
\(\varepsilon=-1\).  Thus, in either case, there is a diffeomorphism
\[
\Psi_\varepsilon:A_{f(a)}\to A_a
\quad\mathrm{rel}\ \partial M
\]
carrying
\[
(\alpha_{f(a)},\beta,\delta_{f(a)})
\longmapsto
(\alpha_a^\varepsilon,\beta,\delta_a^\varepsilon).
\]
The construction of those propositions uses the framed tubular-neighborhood
maps of Lemma~\ref{lem:product-trace-basis}.  Hence
\(\Psi_\varepsilon\) may be chosen so that its internal-boundary restriction is
literally the product of the displayed maps of the three parameterized circle
factors.

By the definition of \(\Gamma_\mu(J)\), there is a diffeomorphism
\[
h_\varepsilon:E(J)\to E(J)
\]
such that
\[
(h_\varepsilon)_*(\mu_J)=\mu_J,
\qquad
(h_\varepsilon)_*(\lambda_J)=\lambda_J^\varepsilon.
\]
Choose on \(\partial E(J)\) the diffeomorphism carrying the parameterized
meridian and longitude circles literally to \(\mu_J\) and
\(\lambda_J^\varepsilon\).  Since a torus diffeomorphism is determined up to
isotopy by its action on \(H_1\) \cite[Theorem~2.5]{FarbMargalit2012}, with
the orientation-reversing case reduced to the orientation-preserving case by
a fixed reflection, the restriction of \(h_\varepsilon\) has this mapping
class.  Extending a boundary isotopy across a collar, isotope
\(h_\varepsilon\) so that it is this chosen diffeomorphism on the boundary and
is product with it on a smaller collar.  This does not change its induced
group automorphism or its orientation sign.

Define
\[
\Theta_\varepsilon:S^1_s\times E(J)\to S^1_s\times E(J)
\]
by
\[
\Theta_\varepsilon(s,x)=(s^\varepsilon,h_\varepsilon(x)).
\]
On the parameterized boundary, this is literally the product of
\[
s\longmapsto s^\varepsilon,
\qquad
\mu_J\longmapsto\mu_J,
\qquad
\lambda_J\longmapsto\lambda_J^\varepsilon.
\]

We now construct
\[
\Omega_\varepsilon:
(S^4,\Sigma_{g,f(a),J})\to(S^4,\Sigma_{g,a,J}).
\]
On \(A_{f(a)}\), use \(\Psi_\varepsilon\), and on
\(S^1_s\times E(J)\), use \(\Theta_\varepsilon\).  Under the source gluing
for rim surgery along \(f(a)\), followed by \(\Psi_\varepsilon\), the three
parameterized circle factors are sent literally by
\[
s\longmapsto\alpha_a^\varepsilon,
\qquad
\mu_J\longmapsto\beta,
\qquad
\lambda_J\longmapsto\delta_a^\varepsilon.
\]
Under \(\Theta_\varepsilon\), followed by the target gluing for rim surgery
along \(a\), the same three circle factors are sent by the same product map.
Hence the two restrictions are equal as diffeomorphisms of the common
parameterized \(3\)-torus, not merely equal on homology or fundamental groups.
They therefore glue to a diffeomorphism of complements.  Since
\(\Psi_\varepsilon\) is the identity on a collar of \(\partial M\), the glued
map is the identity on the outer boundary collar and extends over the surface
normal disk bundle by the identity.  The resulting correction map
\(\Omega_\varepsilon\) induces the identity under the canonical marking.

The map \(\Omega_\varepsilon\) is orientation-preserving.  The connected
oriented drilled exterior \(A_{f(a)}\) has a nonempty outer boundary collar on
which \(\Psi_\varepsilon\) is the identity, so \(\Psi_\varepsilon\) is
orientation-preserving.  The boundary action of \(h_\varepsilon\), in the
basis \((\mu_J,\lambda_J)\), has matrix
\[
\begin{pmatrix}
1&0\\
0&\varepsilon
\end{pmatrix}
\]
and determinant \(\varepsilon\).  A diffeomorphism of a manifold with
boundary preserves the inward-pointing side, so its orientation sign is the
same as the sign of its boundary restriction.  Thus \(h_\varepsilon\) has
orientation sign \(\varepsilon\).  The map
\(s\mapsto s^\varepsilon\) also has sign \(\varepsilon\), and therefore
\(\Theta_\varepsilon\) is orientation-preserving on
\(S^1_s\times E(J)\).

Finally,
\[
\Omega_\varepsilon\circ\Phi_f:
(S^4,\Sigma_{g,a,J})\to(S^4,\Sigma_{g,a,J})
\]
is an orientation-preserving self-diffeomorphism of pairs.  Since \(\Phi_f\)
induces \(f\) and \(\Omega_\varepsilon\) induces the identity under the
canonical markings, the composite induces \(f\).  Hence
\[
\Stab_{\Mod(\Sigma_g)}(q_0)
\cap
\Stab_{\Mod(\Sigma_g)}(\Gamma_\mu(J)\cdot[a])
\subseteq
E(\Sigma_{g,a,J}).
\]
\end{proof}

\section{Proofs of the main theorems and a consequence}
\label{sec:proofs-main}

\begin{proof}[Proof of Theorem~\ref{thm:main}]
The upper bound is Proposition~\ref{prop:upper}, and the lower bound is
Proposition~\ref{prop:lower}.
\end{proof}

\begin{corollary}
\label{cor:infinite-index-drop}
Under the hypotheses of Theorem~\ref{thm:main}, the extendable subgroup of
the rim-surgered surface is an infinite-index subgroup of the extendable
subgroup of the standard unknotted surface:
\[
E(\Sigma_{g,a,J})
=
\Stab(q_0)\cap\Stab(\Gamma_\mu(J)\cdot[a])
\subset
\Stab(q_0)
=
E(\Sigma_g^0),
\]
and the inclusion has infinite index.
\end{corollary}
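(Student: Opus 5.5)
The equality displayed in Corollary~\ref{cor:infinite-index-drop} is Theorem~\ref{thm:main} together with Hirose's theorem, so the only thing to prove is that the index is infinite. Since \(\Stab(q_0)\cap\Stab(\Gamma_\mu(J)\cdot[a])\) contains \(\Stab(q_0)\cap\Stab([a])\) with index at most two, it suffices to show that \([a]\) has infinitely many images under \(\Stab(q_0)\). If \(\Stab(q_0)\cdot[a]\) is infinite, then the cosets of \(\Stab(q_0)\cap\Stab(\Gamma_\mu(J)\cdot[a])\) in \(\Stab(q_0)\) correspond to orbit points of the set \(\{\pm[a]\}\), and there are infinitely many of these.

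The orbit will come from one explicit element of infinite order. Extend \([a]\) by an oriented curve \(b\) with \(a\cdot b=1\), and consider the square \(T_b^2\) of the Dehn twist about \(b\). Every Dehn twist squared preserves every quadratic refinement. Indeed, on \(H_1(\Sigma_g;\mathbb Z_2)\) we have \(T_b^2=\mathrm{id}\), because \(T_{b*}(z)=z+(z\cdot b)b\) and applying it twice adds \(2(z\cdot b)b=0\) mod \(2\). Hence \(T_b^2\in\Stab(q_0)\).

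On integral homology, \(T_b^{2n}\) acts by \(z\mapsto z+2n(z\cdot b)[b]\) up to the sign convention. This gives
\[
T_b^{2n}[a]=[a]\pm 2n[b],
\]
and these classes are pairwise distinct and distinct from the negatives of one another for all \(n\). So the cosets \(T_b^{2n}\bigl(\Stab(q_0)\cap\Stab(\Gamma_\mu(J)\cdot[a])\bigr)\) are pairwise distinct, and the index is infinite.

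There is no real obstacle. The only point to check is that \(T_b^2\) preserves \(q_0\). One could instead use \(T_b\) itself when \(q_0(b)=1\), via the Picard--Lefschetz-type formula \(q(T_b z)=q(z)+(z\cdot b)(1+q(b))\). The squared twist avoids needing that formula, since it acts trivially mod \(2\).
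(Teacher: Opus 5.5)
Your proposal is correct and follows essentially the same argument as the paper. The equality comes from Theorem~\ref{thm:main} together with Hirose's theorem, and the infinite index comes from the even powers $T_b^{2n}$ of a Dehn twist about a curve with $a\cdot b=1$: these act trivially mod $2$ (hence lie in $\Stab(q_0)$) and send $[a]$ to $[a]\pm 2n[b]$, which gives pairwise distinct translates of $\Gamma_\mu(J)\cdot[a]\subseteq\{\pm[a]\}$. Your preliminary reduction through the index-at-most-two subgroup $\Stab(q_0)\cap\Stab([a])$ is harmless but unnecessary, since your final coset argument already works directly with $\Gamma_\mu(J)\cdot[a]$.
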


\begin{proof}
By Theorem~\ref{thm:main},
\(
E(\Sigma_{g,a,J})
=
\Stab(q_0)\cap\Stab(\Gamma_\mu(J)\cdot[a]).
\)
By Hirose's theorem, the extendable subgroup of the unknotted surface is
\(
E(\Sigma_g^0)=\Stab(q_0).
\)
It remains to show that
\(
\Stab(q_0)\cap\Stab(\Gamma_\mu(J)\cdot[a])
\)
has infinite index in \(\Stab(q_0)\).

Choose \(b\in H_1(\Sigma_g;\mathbb Z)\) with \(a\cdot b=1\), and let \(T_b\)
be the Dehn twist about a simple closed curve representing \(b\).  For every
\(n\in\mathbb Z\), the even power \(T_b^{2n}\) acts trivially on
\(H_1(\Sigma_g;\mathbb Z_2)\), and therefore preserves the quadratic form
\(q_0\).  Hence
\(
T_b^{2n}\in \Stab(q_0).
\)
On integral homology,
\(
T_b^{2n}([a])=[a]+2n[b].
\)

We claim that the finite subsets
\(
T_b^{2n}\bigl(\Gamma_\mu(J)\cdot[a]\bigr)
\)
are pairwise distinct as \(n\) varies.  Since
\(
\Gamma_\mu(J)\cdot[a]\subseteq\{[a],-[a]\},
\)
it is enough to check that the unordered pairs
\(
\{\pm([a]+2n[b])\}
\)
are pairwise distinct.  If
\[
[a]+2n[b]=[a]+2m[b],
\]
then \(n=m\).  If
\[
[a]+2n[b]=-[a]-2m[b],
\]
then
\[
2[a]+2(n+m)[b]=0,
\]
which is impossible because \(a\cdot b=1\), so \([a]\) and \([b]\) are
linearly independent over \(\mathbb Z\).  Thus the
\(\Stab(q_0)\)-orbit of the finite set \(\Gamma_\mu(J)\cdot[a]\) is infinite.

Therefore the stabilizer
\(
\Stab(q_0)\cap\Stab(\Gamma_\mu(J)\cdot[a])
\)
has infinite index in \(\Stab(q_0)\). 
\end{proof}

\begin{proof}[Proof of Theorem~\ref{thm:intro-pair-classification}]
First prove necessity.  Suppose
\[
F:(S^4,\Sigma_{g,a,J})\to(S^4,\Sigma_{g,b,K})
\]
is an orientation-preserving pair diffeomorphism inducing the prescribed
mapping class \(f\) under the canonical markings.

By Lemma~\ref{lem:rokhlin}, both rim-surgery surfaces have Rokhlin form
\(q_0\) under the canonical markings.  Since \(F\) preserves the induced spin
structure on the surface, we have
\(
f^*q_0=q_0.
\)

The same peripheral-comparison argument used in the proof of
Proposition~\ref{prop:upper}, applied now to a diffeomorphism from the
\((a,J)\)-rim-surgery complement to the \((b,K)\)-rim-surgery complement,
gives the following.  After the basepoint convention, the induced isomorphism
\(
\varphi:G_J\to G_K
\)
satisfies
\(
\varphi(\mu_J)=\mu_K.
\)
By uniqueness of tubular neighborhoods and isotopy extension
\cite[Chapter~4, Theorem~5.3 and Chapter~8, Theorem~1.3]{Hirsch1976}, isotope
\(F\) through pair diffeomorphisms, supported in a tubular neighborhood of
the source surface and fixed on that surface, so that it is an oriented
normal-bundle map near the surface.  This does not change \(f\) or the
induced outer isomorphism of complement groups, and no ambient boundary
condition is present.  The restriction to the normal circle bundle therefore
has the same form used in Proposition~\ref{prop:upper}.

Naturality of the surface-boundary formula gives, for every loop
\(c\subset\Sigma_g\),
\[
\varphi(\lambda_J^{a\cdot c})
=
\lambda_K^{b\cdot f_*(c)}\mu_K^{u([c])}
\]
for some homomorphism
\[
u:H_1(\Sigma_g;\mathbb Z)\to\mathbb Z.
\]
Choosing \(c_0\) with \(a\cdot c_0=1\) shows that
\(\varphi(\lambda_J)\in P_K\).  Applying the same argument to \(F^{-1}\) gives
\(
\varphi(P_J)=P_K.
\)
Hence
\(
\varphi(\lambda_J)=\mu_K^k\lambda_K^\varepsilon
\)
for some \(k\in\mathbb Z\) and \(\varepsilon\in\{\pm1\}\).  Since the
preferred longitudes are null-homologous and
\[
H_1(E(K);\mathbb Z)\cong\mathbb Z\langle\mu_K\rangle,
\]
passing to homology gives \(k=0\).  Thus
\(
\varphi(\lambda_J)=\lambda_K^\varepsilon.
\)
By Theorem~\ref{thm:peripheral-realization}, there is a possibly
orientation-reversing diffeomorphism
\[
h:E(J)\to E(K)
\]
such that
\[
h_*(\mu_J)=\mu_K,
\qquad
h_*(\lambda_J)=\lambda_K^\varepsilon.
\]

Substituting \(\varphi(\lambda_J)=\lambda_K^\varepsilon\) into the naturality
equation and using peripheral injectivity gives
\[
u([c])=0,
\qquad
b\cdot f_*(c)=\varepsilon(a\cdot c)
\]
for every \(c\).  Since \(f_*\) preserves the algebraic intersection form,
\(
b\cdot f_*(c)=f_*^{-1}[b]\cdot[c].
\)
By nondegeneracy of the intersection pairing,
\(
f_*^{-1}[b]=\varepsilon[a],
\)
or equivalently
\(
f_*[a]=\varepsilon[b].
\)
This proves necessity.

Now prove sufficiency.  Suppose \(\varepsilon\in\{\pm1\}\) and a possibly
orientation-reversing diffeomorphism
\[
h:E(J)\to E(K)
\]
satisfy
\[
f^*q_0=q_0,
\qquad
f_*[a]=\varepsilon[b],
\]
and
\[
h_*(\mu_J)=\mu_K,
\qquad
h_*(\lambda_J)=\lambda_K^\varepsilon.
\]

By Hirose's theorem \cite{Hirose2002} and
Lemma~\ref{lem:normal-product}, choose an orientation-preserving ambient
extension of \(f\) which, in the fixed tubular coordinates, is
\[
(p,z)\longmapsto(f(p),z)
\]
near \(\Sigma_g^0\).  Choose the product tubular neighborhood of
\(R_{f(a)}\) to be the image of the chosen product tubular neighborhood of
\(R_a\).  The restriction on the drilled exterior is then literally the
product-coordinate map
\[
(\alpha_a,\beta,\delta_a)
\longmapsto
(\alpha_{f(a)},\beta,\delta_{f(a)}).
\]
Together with the identity on \(S^1_s\times E(J)\), it agrees literally with
the source and target rim-surgery gluings and therefore gives an
orientation-preserving pair diffeomorphism
\[
\Phi_f:
(S^4,\Sigma_{g,a,J})
\longrightarrow
(S^4,\Sigma_{g,f(a),J})
\]
inducing exactly \(f\) under the canonical markings.

Since
\(
[f(a)]=\varepsilon[b],
\)
Proposition~\ref{prop:homology-relative} applies when \(\varepsilon=1\), and
Proposition~\ref{prop:signed-homology-relative} applies when
\(\varepsilon=-1\).  Thus there is a diffeomorphism
\[
\Psi:A_{f(a)}\to A_b
\quad\mathrm{rel}\ \partial M
\]
whose internal-boundary restriction, by the framed tubular-neighborhood
construction used in those propositions, is literally the product-coordinate
map
\[
(\alpha_{f(a)},\beta,\delta_{f(a)})
\longmapsto
(\alpha_b^\varepsilon,\beta,\delta_b^\varepsilon).
\]

Choose on \(\partial E(J)\) the diffeomorphism carrying the parameterized
meridian and longitude circles literally to \(\mu_K\) and
\(\lambda_K^\varepsilon\).  Since a torus diffeomorphism is determined up to
isotopy by its action on \(H_1\) \cite[Theorem~2.5]{FarbMargalit2012}, with
the orientation-reversing case reduced to the orientation-preserving case by
a fixed reflection, the restriction of \(h\) has this mapping class.  An isotopy extended across a boundary collar makes \(h\) equal to this
chosen boundary map and product with it on a smaller collar.  The isotopy does
not change the induced group isomorphism or the orientation sign.  Define
\[
\Theta:S^1_s\times E(J)\to S^1_s\times E(K)
\]
by
\[
\Theta(s,x)=(s^\varepsilon,h(x)).
\]
Its boundary restriction is literally the product of
\[
s\longmapsto s^\varepsilon,
\qquad
\mu_J\longmapsto\mu_K,
\qquad
\lambda_J\longmapsto\lambda_K^\varepsilon.
\]
Under the source gluing, followed by \(\Psi\), and under \(\Theta\), followed
by the target gluing, the common parameterized \(3\)-torus is sent in both
cases by the same product map
\[
s\longmapsto\alpha_b^\varepsilon,
\qquad
\mu_J\longmapsto\beta,
\qquad
\lambda_J\longmapsto\delta_b^\varepsilon.
\]
Thus the restrictions agree literally, and \(\Psi\) and \(\Theta\) glue to a
diffeomorphism of complements.  Since \(\Psi\) is the identity on the outer
boundary collar, the glued map extends over the surface normal disk bundle by
the identity and induces the identity under the canonical markings.

The second-stage map is orientation-preserving.  The map \(\Psi\) is the
identity near the nonempty outer boundary of the connected drilled exterior
and therefore preserves orientation.  The boundary matrix of \(h\) in the
basis \((\mu_J,\lambda_J)\) has determinant \(\varepsilon\), so \(h\) has
orientation sign \(\varepsilon\).  The map \(s\mapsto s^\varepsilon\) has the
same sign, and hence \(\Theta\) is orientation-preserving.  Composing the
second-stage pair diffeomorphism with \(\Phi_f\) gives an
orientation-preserving diffeomorphism
\[
F:(S^4,\Sigma_{g,a,J})
\longrightarrow
(S^4,\Sigma_{g,b,K})
\]
which induces the prescribed mapping class \(f\).  This proves sufficiency.
\end{proof}

\end{document}